\documentclass[1p,reqno]{amsart}

\usepackage{lineno,hyperref}

\usepackage[a4paper, hmargin={2.7cm,2.7cm},vmargin={3.3cm,3.3cm}]{geometry}
\setlength{\marginparwidth}{2cm}
\linespread{1.0}
\setlength{\parskip}{0.25em}

\usepackage[T1]{fontenc}
\usepackage[utf8]{inputenc}

\usepackage{amsmath,amssymb,mathtools,amsthm,thmtools,mathrsfs}
\usepackage{tikz}
\usetikzlibrary{matrix}
\usepackage{cleveref}


\DeclareMathOperator{\spn}{span}
\newcommand{\dee}{\text{d}}
\DeclareMathOperator{\vol}{vol}
\DeclareMathOperator{\Vect}{Vect}
\newcommand{\an}{\Phi} 
\newcommand{\sy}{\Psi} 
\newcommand{\ft}{\Theta}
\newcommand{\N}{\mathbb{N}}   
\newcommand{\Z}{\mathbb{Z}}   
\newcommand{\Q}{\mathbb{Q}}   
\newcommand{\R}{\mathbb{R}}   
\newcommand{\C}{\mathbb{C}}   
\newcommand{\T}{\mathbb{T}}   
\newcommand{\id}{\text{id}}
\newcommand{\tr}{\text{tr}}
\newcommand{\lfb}{K}
\newcommand{\ufb}{L}
\DeclareMathOperator{\supp}{supp}
\def\lhs#1#2{{_\bullet\!\!}\left\langle #1,#2\right\rangle}

\DeclareMathOperator{\QP}{QP}
\def\sec#1#2{\QP_{#1,#2}}
\def\heis#1#2{\mathcal{E}_{#2}(#1)}
\def\sol#1{\mathcal{S}_{#1}}

\declaretheorem[style = plain, numberwithin = section]{theorem}
\newtheorem{thmx}{Theorem}

\declaretheorem[style = plain,      sibling = theorem]{corollary}
\declaretheorem[style = plain,      sibling = theorem]{lemma}
\declaretheorem[style = plain,      sibling = theorem]{proposition}
\declaretheorem[style = definition, sibling = theorem]{definition}
\declaretheorem[style = definition, sibling = theorem]{example}
\declaretheorem[style = remark,    numbered = no]{remark}
\declaretheorem[style = plain, sibling = theorem]{statement}

\RequirePackage[backend    = biber,
                sortcites  = true,
                giveninits = true,
                doi        = false,
                isbn       = false,
                url        = false,
                maxnames = 50,
                style      = numeric-comp]{biblatex}
\DeclareNameAlias{sortname}{family-given}
\DeclareNameAlias{default}{family-given}
\addbibresource{mybibfile.bib}

\title{The Balian--Low theorem for locally compact abelian groups and vector bundles}

\author{Ulrik Enstad}
\address{Department of Mathematics,
Stockholm University,
SE-106 91 Stockholm, Sweden.}
\email{ulrik.enstad@math.su.se}

\begin{document}

\maketitle

\begin{abstract}
Let $\Lambda$ be a lattice in a second countable, locally compact abelian group $G$ with annihilator $\Lambda^{\perp} \subseteq \widehat{G}$. We investigate the validity of the following statement: For every $\eta$ in the Feichtinger algebra $S_0(G)$, the Gabor system $\{ M_{\tau} T_{\lambda} \eta \}_{\lambda \in \Lambda, \tau \in \Lambda^{\perp}}$ is not a frame for $L^2(G)$. When $\Lambda$ is a lattice in $G = \R$, this statement is a variant of the Balian--Low theorem. Extending a result of R.\ Balan, we show that whether the statement generalizes to $(G,\Lambda)$ is equivalent to the nontriviality of a certain vector bundle over the compact space $(G/\Lambda) \times (\widehat{G}/\Lambda^{\perp})$. We prove this equivalence using Heisenberg modules. More specifically, we show that the Zak transform can be viewed as an isomorphism of certain Hilbert $C^*$-modules. As an application, we prove a Balian--Low theorem in the new context of the group $\R \times \Q_p$, where $\Q_p$ denotes the $p$-adic numbers.
\end{abstract}

\section{Introduction}

\noindent
In his work on projective modules over noncommutative tori \cite{rieffel88}, M.\ Rieffel introduced a class of Hilbert $C^*$-modules known as Heisenberg modules. These modules establish the Morita equivalence of twisted group $C^*$-algebras associated to a lattice $\Delta$ in the time-frequency plane of a locally compact abelian (LCA) group $G$. Heisenberg modules have been applied numerous times in operator algebras and noncommutative geometry, see for example \cite{sol1,sol2,ChLu19,Bo99,LeMo16}.

It was shown by F.\ Luef in \cite{projmod,luefprojections} that the Morita equivalence of Heisenberg modules over noncommutative tori is closely related to the duality theory of regular Gabor frames. These connections were recently generalized to the setting of LCA groups \cite{gaborheis}. Gabor frames are the objects of study in Gabor analysis, which can be considered a subfield of time-frequency analysis. The central problem of Gabor analysis is the recovery of signals from a discrete set of time-frequency translates of fixed functions in $L^2(\R^n)$. While Gabor analysis is usually carried out in $\R^n$, much of the framework can be generalized to the setting of a LCA group $G$ as follows: If $(x,\omega)$ is an element of the time-frequency plane $G \times \widehat{G}$, the time-frequency shift operator $\pi(x,\omega)$ acts on functions $\xi \in L^2(G)$ via
\[ \pi(x,\omega)\xi(t) = \omega(t) \xi(x^{-1}t) \]
for $t \in G$. By picking a lattice $\Delta$ in the time-frequency plane $G \times \widehat{G}$ and a finite set of functions $\eta_1, \ldots, \eta_k \in L^2(G)$, one forms the associated \emph{multiwindow Gabor system} as follows:
\[ \mathcal{G}(\eta_1,\ldots,\eta_k;\Delta) = \{ \pi(z) \eta_j : z \in \Delta, 1 \leq j \leq k \} .\]
To allow for stable reconstruction of functions in $L^2(G)$ from a multiwindow Gabor system, one requires the frame property due to Duffin and Schaeffer \cite{DuSc52} to be satisfied. That is, if there exist constants $\lfb, \ufb > 0$ such that
\[ \lfb \| \xi \|_2^2 \leq \sum_{j=1}^k \sum_{z \in \Delta} | \langle \xi, \pi(z)\eta_j \rangle |^2 \leq \ufb \| \xi \|_2^2 \]
for all $\xi \in L^2(G)$, one calls the multiwindow Gabor system $\mathcal{G}(\eta_1, \ldots, \eta_k ; \Delta)$ a \emph{multiwindow Gabor frame}. In particular, a singlewindow Gabor system, or just Gabor system for short, is called a \emph{Gabor frame} if it forms a frame for $L^2(G)$.

One of the main observations of \cite{projmod,gaborheis} is that if the windows $\eta_1, \ldots, \eta_k$ of a multiwindow Gabor frame over the lattice $\Delta \subseteq G \times \widehat{G}$ are well-localized, they can be interpreted as a set of generators for the Heisenberg module $\heis{G}{\Delta}$ constructed from $\Delta$. By well-localized, we mean that the generators are all elements of the Feichtinger algebra $S_0(G)$, a Banach space of test functions that is fundamental to time-frequency analysis. Since Heisenberg modules are finitely generated, an immediate consequence is the existence of a multiwindow Gabor frame $\mathcal{G}(\eta_1,\ldots,\eta_k;\Delta)$ with $\eta_j \in S_0(G)$, $1 \leq j \leq k$, for any given lattice $\Delta$ in $G \times \widehat{G}$.

A longstanding problem in Gabor analysis on $G = \R^n$ is whether one can find a (singlewindow) Gabor frame $\mathcal{G}(\eta;\Delta)$ with a well-localized window $\eta \in S_0(\R^n)$ over a given lattice $\Delta$ in the time-frequency plane $\R^n \times \widehat{\R}^n \cong \R^{2n}$. One can ask the same question for a lattice $\Delta$ in the time-frequency plane $G \times \widehat{G}$ of a locally compact abelian group $G$: Does there exist an $\eta \in S_0(G)$ for which $\mathcal{G}(\eta,\Delta)$ is a Gabor frame? Or, in terms of Heisenberg modules, is the Heisenberg module $\heis{G}{\Delta}$ singly generated? A basic restriction on the lattice is provided by one of the density theorems \cite{density_hist}: It is necessary that $\vol(\Delta) \leq 1$ for Gabor frames $\mathcal{G}(\eta,\Delta)$ with $\eta \in S_0(G)$ to exist \cite[Theorem 5.6]{jakobsen_density}. But in the group $G = \R^n$, more is true:

\begin{theorem}\label{thm:bal_lattice}
Let $\Delta$ be a lattice in the time-frequency plane $\R^n \times \widehat{\R^n}$ of $\R^n$. If there exists a function $\eta$ in the Feichtinger algebra $S_0(\R^n)$ for which $\mathcal{G}(\eta,\Delta)$ is a Gabor frame for $L^2(\R^n)$, then $\vol(\Delta) < 1$.
\end{theorem}

The above result is an example of a Balian-Low theorem (BLT), as it is a non-existence result for well-localized Gabor frames at the so-called \emph{critical density} $\vol(\Delta)=1$. The original Balian-Low theorem is due to R.\ Balian \cite{balian} and F.\ Low \cite{low} and concerns lattices of the form $\Delta = \alpha \Z \times \beta \Z$ for $\alpha,\beta > 0$ in $\R \times \widehat{\R}$. It also uses a slightly more general notion of time-frequency localization that does not involve the Feichtinger algebra, but which is particular to $G = \R$. In \cite{battle}, a proof of the original formulation of the Balian--Low theorem is deduced from the uncertainty principle, and a relation to noncommutative geometry is demonstrated in \cite{balian_ncg}. The amalgam Balian--Low theorem is another early version of the BLT that employs Wiener amalgam spaces \cite{benheilnut,heilwiener}. Versions of the Balian--Low thorem for more general lattices in $\R^n$ have since been proved, even for discrete sets $\Delta$ without any lattice structure \cite{AsFeKai14}. \Cref{thm:bal_lattice} can also be deduced from the pertubation results of H.\ Feichtinger and N.\ Kaiblinger \cite{FeKa04}. The converse of \Cref{thm:bal_lattice} remains open for general lattices, but was proved for the class of non-rational lattices in \cite{gaborheis}. The proof uses Heisenberg modules and $K$-theory of noncommutative tori.

It is no coincidence that the setting of \Cref{thm:bal_lattice} is the group $G = \R^n$. Indeed, \Cref{thm:bal_lattice} is easily shown to fail if one replaces $\R^n$ with an arbitrary LCA group $G$ \cite{groloc}. For instance, it fails for discrete or compact groups (see \Cref{prop:compact_or_discrete}). One might then ask if there is a way to characterize the groups $G$ for which \Cref{thm:bal_lattice} holds. This will be the setup in the present paper, but we restrict ourselves to the case where the lattice $\Delta$ takes the form $\Lambda \times \Lambda^{\perp}$, where $\Lambda$ is a lattice in $G$ and $\Lambda^{\perp}$ is the annihilator of $\Lambda$ in $\widehat{G}$ \eqref{eq:annihilator}. These lattices always have volume 1. We will consider the following statement in the setting $(G,\Lambda)$, where $G$ is a second countable LCA group and $\Lambda$ is a lattice in $G$.

\begin{statement}\label{problem:bl}
For all $\eta \in S_0(G)$, the Gabor system
\[ \mathcal{G}(\eta, \Lambda \times \Lambda^{\perp}) = \{ \pi(\lambda,\tau) \eta : \lambda \in \Lambda, \tau \in \Lambda^{\perp} \} \]
is not a frame for $L^2(G)$.
\end{statement}

Note that it is a consequence of \Cref{thm:bal_lattice} that \Cref{problem:bl} holds true for any lattice $\Lambda$ in $G=\R^n$.

The advantage of the formulation of \Cref{problem:bl} is that the \emph{Zak transform} can be employed. Originally introduced by I.\ Gelfand \cite{gelfand}, the Zak transform was later generalized by A.\ Weil to the case of locally compact abelian groups \cite{weil}. It takes it name from the physicist J.\ Zak who discovered it independently  \cite{zak1}. Given a lattice $\Lambda$ in the LCA group $G$, the Zak transform of a complex-valued function $\xi$ on $G$ is the function $Z_{G,\Lambda} \xi \colon G \times \widehat{G} \to \C$ given by
\[ Z_{G,\Lambda}\xi(x,\omega) = \sum_{\lambda \in \Lambda} \xi(x\lambda)\omega(\lambda) \]
for $(x,\omega) \in G \times \widehat{G}$. The above defines a continuous function if e.g.\ $\xi \in S_0(G)$.

The first proofs of the amalgam version of the BLT \cite{benheilnut,heilwiener} employed the Zak transform, and used the fact that it diagonalizes the frame operator associated to the Gabor system $\mathcal{G}(\eta, \Lambda \times \Lambda^{\perp})$. In \cite{kankut}, E.\ Kaniuth and G.\ Kutyniok used the Zak transform to show that the Balian--Low statement in \Cref{problem:bl} holds for all lattices in compactly generated, second countable, locally compact abelian groups with noncompact identity component. It is an open problem whether the hypothesis that $G$ is compactly generated can be dropped from their theorem.

One of the main points we want to make in this paper is that the Zak transform has a natural interpretation as an isomorphism of Hilbert $C^*$-modules. When $\Delta = \Lambda \times \Lambda^{\perp}$, the associated Heisenberg module $\heis{G}{\Delta}$ becomes a Hilbert $C^*$-module over the (un-twisted) group $C^*$-algebra of $\Delta$. Using the Fourier transform, this algebra can be identified as the continuous functions on the Pontryagin dual $X \coloneqq \widehat{\Delta} \cong (G/\Lambda) \times (\widehat{G}/\Lambda^{\perp})$. By the Serre--Swan theorem, the projective module $\heis{G}{\Delta}$ must be isomorphic to a vector bundle over $X$. The role of the Zak transform in this respect is to identify the vector bundle in question. More precisely, we prove that there exists a complex vector bundle $E_{G,\Lambda}$ for which the following holds:

\begin{thmx}[cf.\ \Cref{thm:module_iso} / \Cref{prop:hilbert_sec}] \label{thm:intro2}
Let $\Lambda$ be a lattice in the second countable LCA group $G$, and let $\Delta$ be the lattice $\Lambda \times \Lambda^{\perp}$ in $G \times \widehat{G}$. Then the Zak transform implements an isomorphism of Hilbert $C^*$-modules
\[ Z_{G,\Lambda} : \heis{G}{\Delta} \to \Gamma(E_{G,\Lambda}). \]
Here, $\heis{G}{\Delta}$ is the Heisenberg module associated to $\Delta$, and $\Gamma(E_{G,\Lambda})$ is the module of continuous sections of the complex line bundle $E_{G,\Lambda}$ constructed in \Cref{sec:bundle}.
\end{thmx}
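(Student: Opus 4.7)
The plan is to construct $Z_{G,\Lambda}$ as a concrete map on the dense subspace $S_0(G) \subseteq \heis{G}{\Delta}$, check that it lands in $\Gamma(E_{G,\Lambda})$, and verify compatibility with the module action and the $C^*$-valued inner product; these three properties then extend to the full Hilbert $C^*$-module completion automatically. Writing $G$ additively for convenience, I would take
\[ (Z_{G,\Lambda}\eta)(x,\omega) = \sum_{\lambda \in \Lambda} \eta(x-\lambda)\,\omega(\lambda), \qquad \eta \in S_0(G),\ (x,\omega)\in G\times\widehat{G}. \]
Absolute and locally uniform convergence is immediate from the stability of $S_0(G)$ under lattice-periodization. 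A direct change of summation index yields the quasi-periodicity relations
\[ (Z_{G,\Lambda}\eta)(x+\mu,\omega) = \omega(\mu)\,(Z_{G,\Lambda}\eta)(x,\omega), \quad (Z_{G,\Lambda}\eta)(x,\omega+\tau) = (Z_{G,\Lambda}\eta)(x,\omega) \]
for $\mu\in\Lambda$, $\tau\in\Lambda^{\perp}$ (the second using $\tau(\lambda)=1$). These are precisely the cocycle relations cutting out $E_{G,\Lambda}$ over $X_{G,\Lambda}$, so $Z_{G,\Lambda}\eta$ descends to a continuous section.

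Next I would check compatibility with the module actions. Because $\tau(\mu)=1$ for every $(\mu,\tau)\in\Delta$, the cocycle $c$ restricts trivially to $\Delta$ and $C^*(\Delta,c)\cong C^*(\Delta)$; Gelfand duality via the Fourier transform identifies this algebra with $C(X_{G,\Lambda})$, sending $\pi(\mu,\tau)$ to the character $(\dot x,\dot\omega)\mapsto \tau(x)\overline{\omega(\mu)}$. Substituting $\pi(\mu,\tau)\eta$ into the Zak formula and relabeling the summation index gives
\[ (Z_{G,\Lambda}(\pi(\mu,\tau)\eta))(x,\omega) = \tau(x)\overline{\omega(\mu)}\,(Z_{G,\Lambda}\eta)(x,\omega), \]
so $Z_{G,\Lambda}$ intertwines the generators of $C^*(\Delta,c)$ with multiplication in $C(X_{G,\Lambda})$, and hence the full actions by linearity and continuity.

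The main obstacle is verifying that $Z_{G,\Lambda}$ preserves $C^*$-valued inner products. On the Heisenberg side the pairing is $\langle \eta,\xi\rangle_{\Delta}(\mu,\tau) = \langle \eta,\pi(\mu,\tau)\xi\rangle_{L^2(G)}$; on the section side it is the pointwise sesquilinear pairing of $Z_{G,\Lambda}\eta$ and $Z_{G,\Lambda}\xi$, which is well-defined in $C(X_{G,\Lambda})$ because the quasi-periodicity factors cancel. Matching the two reduces, after expanding $\overline{(Z_{G,\Lambda}\eta)}(Z_{G,\Lambda}\xi)$ as a double lattice sum, replacing one index by a difference variable in $\Lambda$, and Fourier-expanding the resulting $\Lambda$-periodic function in $x$ against characters of $G/\Lambda\cong\widehat{\Lambda^{\perp}}$, to an application of Poisson summation on $\Lambda\times\Lambda^{\perp}$ --- essentially the Fundamental Identity of Gabor analysis. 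The careful bookkeeping of characters, conjugations, and Haar normalizations is what makes this step the technical core of the argument rather than a routine manipulation.

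Finally, once isometry holds on $S_0(G)$, $Z_{G,\Lambda}$ extends uniquely to an isometric $C(X_{G,\Lambda})$-linear embedding $\heis{G}{\Delta}\hookrightarrow\Gamma(E_{G,\Lambda})$. For surjectivity I would show that for every point of $X_{G,\Lambda}$ there exists some $\eta\in S_0(G)$ whose Zak transform does not vanish there --- a suitably concentrated bump in $S_0(G)$ suffices --- so that the image evaluates to the full fiber of $E_{G,\Lambda}$ at every point of $X_{G,\Lambda}$. For line bundles over a compact base, a closed $C(X_{G,\Lambda})$-submodule of sections that is fiberwise surjective must coincide with $\Gamma(E_{G,\Lambda})$ (one trivializes locally against a nonvanishing element of the submodule and glues by a partition of unity), which completes the proof.
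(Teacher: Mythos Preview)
Your proposal is correct and tracks the paper closely through the definition of $Z_{G,\Lambda}$ on $S_0(G)$, the quasiperiodicity check, the module-linearity verification, and the inner-product identity; the paper carries out exactly the Poisson-summation computation you sketch, applied to the function $t \mapsto \xi(tx)\overline{\eta(\lambda^{-1}tx)}$, to match $\phi(\lhs{\xi}{\eta})$ with $Z\xi \cdot \overline{Z\eta}$ pointwise.

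The one genuine divergence is surjectivity. The paper proceeds constructively: it builds an explicit module frame $\{G_1,\ldots,G_k\}$ for the quasiperiodic functions, of the form $G_j(x,\omega) = f_j(x)\,\overline{\omega(\lfloor x\rfloor_{B_j})}$ for fundamental domains $B_j$ and $\Lambda$-periodic cutoffs $f_j$, and exhibits each $G_j$ as $Z(T_{x_j}\xi)$ for a compactly supported $\xi \in S_0(G)$; a generating set in the image forces surjectivity. Your route---pointwise nonvanishing of some $Z\eta$ plus the fact that a closed $C(X)$-submodule of sections of a line bundle that hits every fiber is everything---is equally valid and cleaner for the bare isomorphism. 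What the paper's explicit frame buys is reusability: the concrete projection matrices built from these $G_j$ are precisely what is needed later to identify $E_{\R\times\Q_p,\,\Z[1/p]^\Delta}$ as a pullback from $\T^2$, so the constructive proof does double duty. One small point you elide: the paper also checks that the abstract isometric extension from $S_0(G)$ agrees on $\heis{G}{\Delta}\subseteq L^2(G)$ with the $L^2$ Zak transform, which is what justifies calling the resulting map ``the Zak transform'' rather than merely a completion of its restriction.
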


Now \Cref{problem:bl} is equivalent to the nonexistence of a single generator for the Heisenberg module $\heis{G}{\Delta}$. We show in \Cref{sec:modules_sections} that the $C(X)$-module $\Gamma(E)$ of continuous sections of a vector bundle $E$ over a compact Hausdorff space is singly generated if and only if $E$ is a trivial bundle. Consequently, we can formulate \Cref{problem:bl} in terms of the vector bundle $E_{G,\Lambda}$ from \Cref{thm:intro2} as follows:

\begin{thmx}[cf.\ \Cref{thm:bundle_bal}]\label{thm:intro1}
Let $\Lambda$ be a lattice in a second countable, locally compact abelian group $G$. Then the following are equivalent:
\begin{enumerate}
\item $E_{G,\Lambda}$ is nontrivial.
\item The Balian--Low statement (\Cref{problem:bl}) holds in this setting. That is, whenever $\eta \in S_0(G)$, then the Gabor system
\[ \mathcal{G}(\eta,\Lambda \times \Lambda^{\perp}) = \{ \pi(\lambda,\tau) \eta : \lambda \in \Lambda, \tau \in \Lambda^{\perp} \} \]
is not a frame for $L^2(G)$.
\end{enumerate}
\end{thmx}

The above theorem builds upon an idea of R. Balan connecting Gabor superframes to vector bundles over the 2-torus $\T^2$ \cite{balan}. A special case of his result is that the amalgam version of the Balian--Low theorem is a consequence of the nontriviality of a certain line bundle over $\T^2$. If $G = \R$ and $\Lambda = \alpha \Z$ in \Cref{thm:intro1}, then the base space $X_{G,\Lambda}$ is homeomorphic to $\T^2$, and we will indeed show that $E_{G,\Lambda}$ in this case is closely related to Balan's bundle (cf.\ \Cref{ex:balanbundle}). Thus, \Cref{thm:intro1} can be viewed as an extension of this special case of Balan's result to general second countable LCA groups.

We end this paper by applying \Cref{thm:intro1} to prove that the Balian--Low statement (\Cref{problem:bl}) holds in a new setting: We set $G$ to be the truncated adele group $\R \times \Q_p$ where $\Q_p$ denotes the $p$-adic numbers. It is a well-known fact from number theory that the group of $p$-adic rationals $\Lambda = \Z[1/p] = \{ a/p^k : a,k\in \Z \}$ embeds as a lattice in $G$. We then have the following:

\begin{thmx}[cf.\ \Cref{thm:rqp}]\label{thm:intro3}
Let $G$ be the group $\R \times \Q_p$, and let $\Lambda$ be the lattice $\Z[1/p]$ embedded into $G$ as in \Cref{sec:rp}. Then The Balian--Low statement (\Cref{problem:bl}) holds for $(G,\Lambda)$: That is, whenever $\eta \in S_0(\R \times \Q_p)$, the Gabor system
\[ \mathcal{G}(\eta, \Lambda \times \Lambda^{\perp}) = \{ (s,x) \mapsto e^{2\pi i rs} e^{-2\pi i \{ r x \}_p} \eta( s - q, x - q) : q,r \in \Z[1/p] \} \]
is not a frame for $L^2(\R \times \Q_p)$.
\end{thmx}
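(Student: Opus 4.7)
The plan is to apply \cref{thm:intro1}: proving \cref{thm:intro3} reduces to showing that the line bundle $E_{G,\Lambda}$ is nontrivial over the base space $X_{G,\Lambda} = (G/\Lambda) \times (\widehat{G}/\Lambda^{\perp})$. I first identify this base space. Using the self-duality $\widehat{\Q_p} \cong \Q_p$ and checking the annihilator condition directly, one finds that $\Lambda^{\perp}$ is again a diagonal copy of $\Z[1/p]$ in $\widehat{\R \times \Q_p} \cong \R \times \Q_p$ (up to a sign). Hence both $G/\Lambda$ and $\widehat{G}/\Lambda^{\perp}$ are isomorphic to the classical $p$-adic solenoid $\Sigma_p$, and $X_{G,\Lambda} \cong \Sigma_p \times \Sigma_p$.

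Next, I compute the relevant \v{C}ech cohomology. Writing $\Sigma_p$ as the inverse limit of circles under the degree-$p$ covering $z \mapsto z^p$, continuity of \v{C}ech cohomology gives $\check{H}^1(\Sigma_p;\Z) \cong \Z[1/p]$ and $\check{H}^n(\Sigma_p;\Z) = 0$ for $n \geq 2$. Since these groups are torsion-free, the K\"unneth formula yields
\[ \check{H}^2(\Sigma_p \times \Sigma_p; \Z) \cong \Z[1/p] \otimes_{\Z} \Z[1/p] \cong \Z[1/p], \]
which is nonzero. Complex line bundles on this compact Hausdorff space are classified by this group, so in principle there is room for a nontrivial $E_{G,\Lambda}$.

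To show that $c_1(E_{G,\Lambda})$ is actually nonzero, my strategy is to exploit the natural continuous surjection $\pi \colon \Sigma_p \times \Sigma_p \to \T^2$ that collapses the $\Z_p$-fibers in each factor. The $(G,\Lambda) = (\R,\Z)$ case of \cref{thm:intro1} produces the classical line bundle $E_0$ over $\T^2$ whose Chern class generates $\check{H}^2(\T^2;\Z) = \Z$; this is the bundle identified in \cref{ex:balanbundle} with Balan's line bundle. The induced map $\pi^{*}$ on $\check{H}^2$ corresponds to the canonical inclusion $\Z \hookrightarrow \Z[1/p]$ applied in each K\"unneth tensor factor, and so sends the generator of $\check{H}^2(\T^2;\Z)$ to $1 \otimes 1 \neq 0$ in $\Z[1/p] \otimes \Z[1/p]$. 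It then remains to identify $E_{G,\Lambda} \cong \pi^{*}E_0$, which I would do by invoking the naturality of the Heisenberg module and of the Zak transform (\cref{thm:intro2}) under the evident morphism of group--lattice data $(\R, \Z) \hookrightarrow (\R \times \Q_p, \Z[1/p])$.

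The main obstacle is precisely this last naturality step: \cref{thm:intro2} identifies $E_{G,\Lambda}$ via a chain of constructions, and one must verify that each step (Heisenberg module, Zak transform, Serre--Swan) is functorial under the inclusion of ambient groups in a way that matches the geometric pullback along $\pi$. Should the direct pullback identification prove inconvenient, a backup is to compute $c_1(E_{G,\Lambda})$ from an explicit transition cocycle for $\heis{G}{\Lambda \times \Lambda^{\perp}}$ coming from Rieffel's construction, then pair the resulting class with a generator of $\check{H}^2(\Sigma_p \times \Sigma_p;\Z)$, in the spirit of the Latremoliere--Packer analysis of noncommutative solenoids cited in the introduction.
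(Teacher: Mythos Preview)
Your overall strategy coincides with the paper's: identify $X_{G,\Lambda}$ with $\Sigma_p \times \Sigma_p$, realize $E_{G,\Lambda}$ as the pullback of $E_{\R,\Z}$ along the canonical projection $\pi_0^2 \colon \Sigma_p^2 \to \T^2$, and then argue that this pullback is nontrivial. Your nontriviality argument via $\check{H}^2$ and K\"unneth is equivalent to the paper's, which instead classifies $\Vect^+(\Sigma_p^2)$ as a direct limit and reads off that the pullback of $E_{1,1}$ lands at a nonzero point of $\Z^+ \times \Z[1/p]$.

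Where you diverge from the paper is in the pullback identification $E_{G,\Lambda} \cong (\pi_0^2)^* E_{\R,\Z}$, which you correctly flag as the crux. Your proposed route---invoking functoriality of the Heisenberg module and Zak transform under the inclusion $(\R,\Z) \hookrightarrow (\R \times \Q_p, \Z[1/p])$---is not the one the paper takes, and it is not clear it can be made to work cleanly: the projection $\pi_0 \colon \Sigma_p \to \T$ is dual to the lattice inclusion $\Z \hookrightarrow \Z[1/p]$, not to a group homomorphism $\R \times \Q_p \to \R$ carrying $\Z[1/p]^\Delta$ to $\Z$ (the formula $[s,x] \mapsto [s - \{x\}_p]$ does not lift to a group map). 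So the ``evident morphism of group--lattice data'' does not directly induce the map you need on base spaces, and the naturality statement you would want is genuinely delicate. The paper bypasses this entirely: it builds explicit two-element normalized tight module frames for both $\sec{\R}{\Z}$ and $\sec{\R \times \Q_p}{\Z[1/p]^\Delta}$ using the same wavelet filter $f$ (lifted via $\tilde f(s,x) = f(s - \{x\}_p)$), computes the associated $2 \times 2$ projection matrices over $C(\T^2)$ and $C(\Sigma_p^2)$, and checks by hand that the solenoid projection equals the torus projection precomposed with $\pi_0^2$. This is precisely your backup plan of computing an explicit cocycle, and it is what actually carries the argument.
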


The above theorem is the first Balian--Low theorem in the context of a LCA group which is not compactly generated. Hence, it is not covered by the result of Kaniuth and Kutyniok \cite{kankut}. Number-theoretic groups such as $\R \times \Q_p$ and the full adeles over the rationals have not been explored much in Gabor analysis so far. In \cite{adeles}, examples of Gabor frames in these groups are constructed, and a mild Balian--Low type theorem, namely \cite[Proposition 4.4]{adeles}, is also proved. However, the result only holds for functions in $S_0(\R \times \Q_p)$ of a very specific form, and \Cref{thm:intro3} is a generalization to all generators in $S_0(\R \times \Q_p)$. \\

The text is structured as follows: In \Cref{sec:hilbertmodules}, we define Hilbert $C^*$-modules and their frames, and we discuss modules of sections of vector bundles. In \Cref{sec:tfa}, we introduce Gabor analysis on locally compact abelian groups and Heisenberg modules. In \Cref{sec:bundles_qp}, we introduce the Zak transform, quasiperiodic functions and the vector bundle $E_{G,\Lambda}$ from \Cref{thm:intro2}. In \Cref{sec:connecting}, we show that the Zak transform gives an isomorphism of Hilbert $C^*$-modules, and prove theorems \Cref{thm:intro2} and \Cref{thm:intro1}. Then in \Cref{sec:rp}, we prove a Balian--Low theorem for the group $\R \times \Q_p$, namely \Cref{thm:intro3}. In the appendix, we have collected some basic results that are needed but do not constitute a part of the spirit of the main text.

\section{Hilbert \texorpdfstring{$C^*$}{C*}-modules and their frames}\label{sec:hilbertmodules}

\subsection{Hilbert \texorpdfstring{$C^*$}{C*}-modules}

In this section we define Hilbert $C^*$-modules, see e.g.\ \cite{raeburns}. Let $A$ be a unital $C^*$-algebra. A Hilbert $C^*$-module over $A$ is, roughly speaking, a ``Hilbert space'' with an inner product taking values in the $C^*$-algebra $A$ rather than the complex numbers. More precisely, a \emph{left Hilbert $A$-module} $\mathcal{E}$ is a left $A$-module that is equipped with an $A$-valued inner product $\lhs{ \cdot}{\cdot} \colon \mathcal{E} \times \mathcal{E} \to A$ that satisfies the following axioms:
\begin{enumerate}
\item $\lhs{a \xi + b \eta}{\gamma} = a\lhs{\xi}{\gamma} + b \lhs{\eta}{\gamma}$ for $a,b \in A$ and $\xi, \eta, \gamma \in \mathcal{E}$.
\item $\lhs{\xi}{\eta}^* = \lhs{\eta}{\xi}$ for $\xi,\eta \in \mathcal{E}$.
\item $\lhs{\xi}{\xi} \geq 0$, with $\lhs{\xi}{\xi} = 0$ if and only if $\xi = 0$.
\item $\mathcal{E}$ is complete with respect to the norm $\| \xi \|_{\mathcal{E}} = \| \lhs{\xi}{\xi} \|^{1/2}$.
\end{enumerate}
Note that we use the dot in the notation for the $A$-valued inner product to distinguish it from $\C$-valued inner products.


From the definition above, $\mathcal{E}$ has a complex vector space structure given by $ \lambda \xi \coloneqq (\lambda 1_A) \xi$ where $\lambda \in \C$, $\xi \in \mathcal{E}$ and $1_A$ is the multiplicative identity of $A$.

If $A = \C$, one recovers the definition of a (complex) Hilbert space. \\

Let $\mathcal{E}$ and $\mathcal{F}$ be left Hilbert $A$-modules, and let $T \colon \mathcal{E} \to \mathcal{F}$ be a bounded, $A$-linear map. Then $T$ is called \emph{adjointable} if there exists an $A$-linear map $S \colon \mathcal{F} \to \mathcal{E}$ such that
\[ \lhs{ T \xi}{\eta} = \lhs{\xi}{S \eta} \]
for all $\xi \in \mathcal{E}$ and $\eta \in \mathcal{F}$. The map $S$ is then uniquely determined and is referred to the \emph{adjoint} of $S$. It is denoted by $S = T^*$. If $T$ is adjointable, then $T$ is invertible if and only if its adjoint is invertible, and in that case we have the relationship $(T^*)^{-1} = (T^{-1})^*$. The set of adjointable operators $\mathcal{E} \to \mathcal{E}$ is a $C^*$-algebra and is denoted by $\mathcal{L}_A(\mathcal{E})$ or just $\mathcal{L}(\mathcal{E})$.

A bounded, $A$-linear map $T \colon \mathcal{E} \to \mathcal{F}$ is called \emph{inner product preserving} if
\[ \lhs{T \xi}{T \eta} = \lhs{\xi}{\eta} \]
for all $\xi, \eta \in \mathcal{E}$. An inner product preserving map is automatically injective, as it becomes an isometry in the sense of metric spaces. An inner product preserving map is not necessarily adjointable, see \cite[Example 2.19]{raeburns}. However, if it is surjective, hence invertible, then it is also adjointable, and the adjoint is given by the inverse.

We say that two left Hilbert $A$-modules $\mathcal{E}$ and $\mathcal{F}$ are \emph{isomorphic} if there exists a surjective inner product preserving (hence invertible and adjointable) $A$-linear map $T \colon \mathcal{E} \to \mathcal{F}$.\\


Let $J$ be a countable index set. Denote by $\ell^2(J,A)$ the set of all sequences $(a_j)_{j \in J}$ for which the sum $\sum_{j \in J} a_j a_j^*$ converges unconditionally in $A$ (this is the same as the direct sum $\bigoplus_{j \in J} A$, see \cite[p.\ 6]{La95}). This set forms a left Hilbert $A$-module with respect to pointwise left multiplication and the inner product
\[ \lhs{(a_j)_j}{(b_j)_j} = \sum_{j \in J} a_j b_j^* \]
for $(a_j)_j, (b_j)_j \in \ell^2(J,A)$. If $J = \{ 1, \ldots, k\}$ for some $k \in \N$, one obtains the left Hilbert $A$-module $A^k$, the \emph{free} $A$-module of rank $k$. We represent the elements of $A^k$ as row vectors. If $M$ is a $k \times k$ matrix with entries in $A$, then \emph{right multiplication} by $M$ defines an $A$-linear map $A^n \to A^n$.

A left Hilbert $A$-module $\mathcal{E}$ is called \emph{countably generated} if there exists a countable set $S \subseteq \mathcal{E}$ such that $\{ a \cdot \xi : a \in A, \xi \in S \}$ is dense in $\mathcal{E}$. If $A = \C$ so that $\mathcal{E}$ is a Hilbert space, then being countably generated as an $A$-module is the same as being separable as a Hilbert space. The module $\mathcal{E}$ is \emph{finitely generated} if the set $S$ is finite. This is equivalent to the existence of a finite set $\{ \eta_1, \ldots, \eta_k \}$ in $\mathcal{E}$ with the following property: For every $\xi \in \mathcal{E}$, there exist elements $a_1, \ldots, a_k \in A$ such that
\[ \xi = a_1 \eta_1 + \cdots + a_k \eta_k .\]
The set $\{ \eta_1, \ldots, \eta_k \}$ is called a generating set for $\mathcal{E}$. Moreover, $\mathcal{E}$ is \emph{finitely generated projective} if is isomorphic to a module of the form $A^k P$, where $P$ is a $k \times k$ projection matrix with coefficients in $A$.

\subsection{Module frames}

A \emph{module frame} \cite{frank} in a left Hilbert $A$-module $\mathcal{E}$ is a sequence $(\eta_j)_{j \in J}$ in $\mathcal{E}$ for which there exists $\lfb,\ufb > 0$ such that the following double inequality holds for all $\xi \in \mathcal{E}$:
\begin{equation}
\lfb \lhs{\xi}{\xi} \leq \sum_{j \in J} \lhs{\xi}{\eta_j}\lhs{\xi}{\eta_j}^* \leq \ufb \lhs{\xi}{\xi} . \label{eq:frame_def}
\end{equation}
Here, the inequalities are to be interpreted with respect to the order structure on the self-adjoint elements of the $C^*$-algebra $A$.

Note that if $A$ is the $C^*$-algebra of complex numbers, then $\mathcal{E}$ is a Hilbert space and the above double inequality reduces to
\begin{equation}
\lfb \| \xi \|^2 \leq \sum_{j \in J} | \langle \xi, \eta_j \rangle |^2 \leq \ufb \| \xi \|^2
\end{equation}
for all $\xi \in \mathcal{H}$.

We call the numbers $\lfb$ and $\ufb$ \emph{lower} and \emph{upper frame bounds}, respectively. A frame is called \emph{tight} if one can choose frame bounds $\lfb=\ufb$, and \emph{normalized tight} if one can choose $\lfb=\ufb=1$.

A sequence $(\eta_j)_{j \in J}$ that satisfies the upper frame bound condition but not necessarily the lower, is called a \emph{Bessel sequence}. An upper frame bound for a Bessel sequence is called a \emph{Bessel bound} for the sequence. If $(\eta_j)_j$ is a Bessel sequence in $\mathcal{E}$, one defines the operators $\an \colon \mathcal{E} \to \ell^2(J,A)$ and $\sy \colon \ell^2(J,A) \to \mathcal{E}$ by
\begin{align}
    \an \xi &= ( \lhs{\xi}{\eta_j})_{j \in J} \\
    \sy (a_j)_j &= \sum_{j \in J} a_j \eta_j
\end{align}
for $\xi \in \mathcal{E}$ and $(a_j)_j \in \ell^2(J,A)$. These are adjointable $A$-linear operators, and $\an^* = \sy$. They go under different names in the literature. In \cite{frank}, $\an$ is called the \emph{frame transform}, while in Gabor analysis in general, they are called the \emph{analysis} and \emph{synthesis} operator, respectively.

One also defines $\ft = \sy \an$ and calls it the \emph{frame operator} corresponding to $(\eta_j)_j$. It is given by
\begin{equation}
    \ft \xi = \sum_{j \in J} \lhs{\xi}{\eta_j} \eta_j \label{eq:module_ft}
\end{equation}
for $\xi \in \mathcal{E}$.

The following gives an important characterization of the frame property of a Bessel sequence. It can be seen by noting that $\lhs{ \ft \xi}{\xi} = \sum_{j \in J} \lhs{\xi}{\eta_j}\lhs{\xi}{\eta_j}^*$ and using the fact that $T_1 \leq T_2$ if and only if $\lhs{T_1 \xi}{\xi} \leq \lhs{T_2 \xi}{\xi}$ for all $\xi \in \mathcal{E}$, where $T_1,T_2 \in \mathcal{L}(\mathcal{E})$ are self-adjoint \cite[Lemma 2.28]{raeburns}.

\begin{proposition}\label{prop:frame_char}
Let $\mathcal{E}$ be a left Hilbert $A$-module over a $C^*$-algebra $A$. Let $(\eta_j)_{j \in J}$ be a Bessel sequence in $\mathcal{E}$. Then the following are equivalent:
\begin{enumerate}
    \item $(\eta_j)_j$ is a frame for $\mathcal{E}$.
    \item The frame operator $\Theta$ corresponding to $(\eta_j)_j$ as in \eqref{eq:module_ft} is invertible in $\mathcal{L}(\mathcal{E})$.
\end{enumerate}
\end{proposition}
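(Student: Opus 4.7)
My plan is to exploit the factorization $\ft = \sy \an = \an^* \an$ of the frame operator, which makes $\ft$ a positive self-adjoint element of $\mathcal{L}(\mathcal{E})$. The first step is to compute, for any $\xi \in \mathcal{E}$,
\[
\lhs{\ft \xi}{\xi} = \lhs{\an^* \an \xi}{\xi} = \lhs{\an \xi}{\an \xi} = \sum_{j \in J} \lhs{\xi}{\eta_j}\lhs{\xi}{\eta_j}^{*},
\]
where the middle inner product is taken in $\ell^2(J,A)$. Consequently, the frame inequality \eqref{eq:frame_def} takes the equivalent form $\lfb \lhs{\xi}{\xi} \leq \lhs{\ft \xi}{\xi} \leq \ufb \lhs{\xi}{\xi}$ for all $\xi \in \mathcal{E}$. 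By the cited \cite[Lemma 2.28]{raeburns}, this double inequality on $A$-valued inner products is equivalent to the operator inequality $\lfb \cdot \id_{\mathcal{E}} \leq \ft \leq \ufb \cdot \id_{\mathcal{E}}$ in the $C^*$-algebra $\mathcal{L}(\mathcal{E})$.

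For the implication (i) $\Rightarrow$ (ii), I would simply observe that the equivalence above gives $\ft - \lfb \cdot \id_{\mathcal{E}} \geq 0$, so that the spectrum of the self-adjoint operator $\ft$, computed inside $\mathcal{L}(\mathcal{E})$, lies in the interval $[\lfb,\ufb]$; in particular $0$ is not in the spectrum, so $\ft$ is invertible. For the converse (ii) $\Rightarrow$ (i), the Bessel assumption already supplies an upper frame bound, so only the lower bound needs to be produced. Since $\ft = \an^* \an$ is positive, if it is additionally invertible in $\mathcal{L}(\mathcal{E})$ then continuous functional calculus in the commutative unital $C^*$-subalgebra generated by $\ft$ yields $\ft \geq \| \ft^{-1} \|^{-1} \cdot \id_{\mathcal{E}}$; translating this back via the same equivalence produces the lower frame bound $\lfb = \| \ft^{-1} \|^{-1}$.

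The only genuinely non-formal ingredient is the translation between the $A$-valued ordering and the operator-valued ordering, which is precisely the content of the invoked lemma from \cite{raeburns}. Everything else is routine continuous functional calculus in the $C^*$-algebra $\mathcal{L}(\mathcal{E})$, so I do not expect any substantial obstacle beyond carefully citing the right tools.
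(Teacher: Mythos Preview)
Your proposal is correct and follows exactly the route the paper sketches just before the proposition: compute $\lhs{\ft \xi}{\xi} = \sum_{j} \lhs{\xi}{\eta_j}\lhs{\xi}{\eta_j}^*$ and invoke \cite[Lemma~2.28]{raeburns} to pass between the $A$-valued frame inequality and the operator inequality $\lfb\cdot\id_{\mathcal{E}} \leq \ft \leq \ufb\cdot\id_{\mathcal{E}}$. The spectral/functional-calculus details you supply to convert this operator bound into invertibility (and back) are the standard way to complete the argument, and the paper leaves them implicit.
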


Using the Cauchy--Schwarz inequality for Hilbert $C^*$-modules, one can show that for a finite sequence, there is always an upper frame bound. Thus, to show that a finite sequence is a module frame, one only needs to show that there exists a lower frame bound. In fact, we have the following characterizations of finite module frames.

\begin{proposition}\label{prop:fingen_module}
Let $A$ be a unital $C^*$-algebra, and let $\mathcal{E}$ be a left Hilbert $A$-module. Let $\eta_1, \ldots, \eta_k \in \mathcal{E}$. Then the following are equivalent:
\begin{enumerate}
    \item The set $\{ \eta_1, \ldots, \eta_k \}$ is a generating set for $\mathcal{E}$.
    \item The sequence $(\eta_1, \ldots, \eta_k)$ is a module frame for $\mathcal{E}$.
    \item The frame operator $\Theta$ corresponding to $(\eta_1, \ldots, \eta_k)$ as in \eqref{eq:module_ft} is invertible in $\mathcal{L}(\mathcal{E})$.
\end{enumerate}
Moreover, if $(\eta_1, \ldots, \eta_k)$ is a module frame, then the sequence $( \tilde{\eta_1}, \ldots, \tilde{\eta_k})$ where $\tilde{\eta_j} = \ft^{-1/2} \eta_j$ for $1 \leq j \leq k$ is a normalized tight module frame for $\mathcal{E}$. Hence, $\mathcal{E}$ being finitely generated is equivalent to the existence of a normalized tight module frame in $\mathcal{E}$.
\end{proposition}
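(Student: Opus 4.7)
My plan is to reduce the three-way equivalence to the two substantive facts: that a finite sequence is automatically Bessel, and that a surjective adjointable map between Hilbert $C^*$-modules has an invertible $TT^*$. First I would observe that by the Cauchy--Schwarz inequality for Hilbert $C^*$-modules one has the pointwise estimate $\sum_{j=1}^{k} \lhs{\xi}{\eta_j}\lhs{\xi}{\eta_j}^* \leq \big(\sum_{j=1}^{k} \|\eta_j\|^2\big)\lhs{\xi}{\xi}$, so $(\eta_1,\ldots,\eta_k)$ is automatically a Bessel sequence and \cref{prop:frame_char} immediately yields (ii) $\Leftrightarrow$ (iii). The implication (iii) $\Rightarrow$ (i) is then the standard reconstruction argument: for every $\xi \in \mathcal{E}$,
\[ \xi = \ft \ft^{-1} \xi = \sum_{j=1}^{k} \lhs{\ft^{-1}\xi}{\eta_j}\, \eta_j, \]
which exhibits $\xi$ as an $A$-linear combination of the $\eta_j$'s.

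The remaining implication (i) $\Rightarrow$ (iii) is the main obstacle and is where the theory of Hilbert $C^*$-modules genuinely enters. The synthesis operator $\sy \colon A^k \to \mathcal{E}$, $\sy(a_1,\ldots,a_k) = \sum_j a_j \eta_j$, is adjointable with $\sy^* = \an$, and by construction $\ft = \sy \an = \sy \sy^*$. The generating condition in (i) is precisely the surjectivity of $\sy$. What I need is therefore the following Hilbert $C^*$-module fact: if $T \in \mathcal{L}(\mathcal{E},\mathcal{F})$ has closed range, then $TT^*$ restricts to an isomorphism of $\operatorname{range}(T)$ onto itself (this is, e.g., a standard consequence of Lance's treatment of adjointable operators with closed range). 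Applying this to the surjective $\sy$ gives that $\ft = \sy\sy^*$ is invertible in $\mathcal{L}(\mathcal{E})$. The key subtlety is that, unlike in the Hilbert space setting, surjectivity cannot be reduced to an orthogonal decomposition since kernels of adjointable maps need not be orthogonally complemented in Hilbert $C^*$-modules.

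For the moreover claim, once $\ft$ is positive and invertible, its positive square root $\ft^{-1/2} \in \mathcal{L}(\mathcal{E})$ is also self-adjoint, so the frame operator associated to $\tilde{\eta}_j = \ft^{-1/2}\eta_j$ satisfies, for all $\xi \in \mathcal{E}$,
\[ \sum_{j=1}^{k} \lhs{\xi}{\tilde{\eta}_j}\tilde{\eta}_j \;=\; \ft^{-1/2} \sum_{j=1}^{k} \lhs{\ft^{-1/2}\xi}{\eta_j}\eta_j \;=\; \ft^{-1/2} \ft \ft^{-1/2} \xi \;=\; \xi, \]
exhibiting $(\tilde{\eta}_j)$ as a normalized tight module frame. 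The final sentence of the proposition then follows immediately: the existence of a normalized tight finite module frame forces $\mathcal{E}$ to be finitely generated via this reconstruction formula, and the converse is precisely what was just established.
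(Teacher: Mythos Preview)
Your proof is correct. The equivalence (ii) $\Leftrightarrow$ (iii) via \cref{prop:frame_char} and the ``moreover'' computation match the paper's argument essentially verbatim (the paper phrases the latter as the identity $\lhs{\xi}{\xi} = \lhs{\Theta^{-1/2}\Theta\Theta^{-1/2}\xi}{\xi}$, which is the same thing you compute).

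The difference lies in (i) $\Leftrightarrow$ (ii): the paper simply cites \cite[Proposition 5.9]{frank} for this equivalence, whereas you supply a self-contained argument. Your (iii) $\Rightarrow$ (i) via the reconstruction formula is immediate, and your (i) $\Rightarrow$ (iii) via Lance's closed-range theorem---surjectivity of the adjointable synthesis map $\sy$ forces $\sy\sy^* = \ft$ to be invertible---is a clean and standard route. What you gain is that the reader need not consult an external reference for the one genuinely nontrivial implication; what the paper gains is brevity. Your remark that orthogonal complementation can fail in Hilbert $C^*$-modules is apt and explains why one cannot simply mimic the Hilbert-space argument and must instead invoke the closed-range machinery.
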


\begin{proof}
The equivalence of $(i)$ and $(ii)$ is proved in e.g.\ \cite[Proposition 5.9]{frank}. The equivalence of $(ii)$ and $(iii)$ is \Cref{prop:frame_char}. The last observation follows from writing out the identity
\[ \lhs{\xi}{\xi} = \lhs{\Theta^{-1/2} \Theta \Theta^{-1/2} \xi}{\xi} \]
for $\xi \in \mathcal{E}$, where $\Theta$ is the frame operator corresponding to the sequence $(\eta_j)_j$.
\end{proof}

An important application of normalized tight frames is the following, see \cite[Proposition 7.2]{Ri10} for a reference.

\begin{proposition}\label{prop:frame_proj}
Let $A$ be a unital $C^*$-algebra. Then every finitely generated left Hilbert $A$-module is projective. In fact, suppose $\{ \eta_1, \ldots, \eta_k \}$ is a normalized tight frame for the Hilbert $A$-module $\mathcal{E}$ (which exists by \Cref{prop:fingen_module}). Then the $k \times k$ matrix
\[ P = \begin{pmatrix} \lhs{ \eta_1}{\eta_1} & \cdots & \lhs{\eta_1}{\eta_k}  \\
 \vdots & \ddots & \vdots \\
 \lhs{\eta_k}{\eta_1} & \cdots & \lhs{\eta_k}{\eta_k} \end{pmatrix} \]
is a projection in $M_k(A)$, and $\mathcal{E} \cong A^k P$.
\end{proposition}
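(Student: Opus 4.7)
The plan is to use the analysis and synthesis operators from the preceding subsection to produce an explicit isomorphism between $\mathcal{E}$ and a corner of the free module $A^k$, which establishes both projectivity and the claimed form of $\mathcal{E}$ at once. By \cref{prop:fingen_module} we may fix a normalized tight frame $\{\eta_1, \ldots, \eta_k\}$ and set $J = \{1,\ldots,k\}$. Define $\an \colon \mathcal{E} \to A^k$ by $\an(\xi) = (\lhs{\xi}{\eta_1}, \ldots, \lhs{\xi}{\eta_k})$ and $\sy \colon A^k \to \mathcal{E}$ by $\sy(a_1, \ldots, a_k) = \sum_j a_j \eta_j$. These are mutually adjoint, and the normalized tight condition is equivalent to $\sy \an = \ft = \mathrm{id}_{\mathcal{E}}$, i.e.\ the reconstruction identity $\sum_j \lhs{\xi}{\eta_j} \eta_j = \xi$ holds.

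Next I would turn to the other composition $\an \sy \colon A^k \to A^k$. From $\sy \an = \mathrm{id}_{\mathcal{E}}$ one sees that $(\an \sy)^2 = \an \sy$, and since $\an^* = \sy$ it is self-adjoint, so $\an \sy$ is a projection in $\mathcal{L}(A^k)$. To identify it with right multiplication by the matrix in the proposition, I would evaluate on the $i$-th standard basis row vector $e_i \in A^k$: since $\sy(e_i) = \eta_i$, we have $\an \sy(e_i) = (\lhs{\eta_i}{\eta_1}, \ldots, \lhs{\eta_i}{\eta_k})$, which is precisely the $i$-th row of $P$. Hence $\an \sy$ acts as right multiplication by $P$, and $P$ is a projection in $M_k(A)$.

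Finally, I would argue that $\an$ restricts to an isomorphism $\mathcal{E} \to A^k P$. Injectivity follows from $\sy \an = \mathrm{id}_{\mathcal{E}}$. The image is contained in $A^k P$ because $\an(\xi) = (\an \sy)\an(\xi) = \an(\xi) P$, and it contains all of $A^k P$ since any $v = uP \in A^k P$ satisfies $\an(\sy(v)) = vP = uP^2 = uP = v$. Inner products are preserved via
\[ \lhs{\an \xi}{\an \eta} = \sum_j \lhs{\xi}{\eta_j}\lhs{\eta_j}{\eta} = \lhs{\sum_j \lhs{\xi}{\eta_j}\eta_j}{\eta} = \lhs{\xi}{\eta}, \]
using $A$-linearity in the first slot and the reconstruction identity. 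This yields $\mathcal{E} \cong A^k P$, whence $\mathcal{E}$ is projective.

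I anticipate no real obstacle; the argument is essentially a polarized form of the normalized tight frame reconstruction identity, together with the routine observation that the two canonical compositions of analysis and synthesis operators yield the frame operator on one side and a self-adjoint idempotent on the other. The only point requiring care is the convention that elements of $A^k$ are row vectors so that $k \times k$ matrices act on the right, which is precisely what forces the index ordering $P_{ij} = \lhs{\eta_i}{\eta_j}$ in the statement.
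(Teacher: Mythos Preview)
Your argument is correct and is the standard proof of this fact. The paper does not actually supply its own proof of this proposition; it simply records the statement and cites \cite[Proposition 7.2]{Ri10} for a reference. What you have written is essentially the argument one finds there (and in the module-frame literature more generally): use that the synthesis and analysis operators of a normalized tight frame satisfy $\sy\an = \mathrm{id}_{\mathcal{E}}$, deduce that $\an\sy$ is a self-adjoint idempotent on $A^k$, identify it with right multiplication by the Gram matrix $P$, and conclude that $\an$ is an inner-product-preserving $A$-linear isomorphism onto $A^kP$. Your remark about the row-vector convention forcing $P_{ij} = \lhs{\eta_i}{\eta_j}$ is exactly the bookkeeping point one has to get right, and you have it right.
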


Note especially the first sentence of \Cref{prop:frame_proj}, namely that all finitely generated Hilbert $C^*$-modules are projective. One can show that this also holds more generally for countably generated projective $C^*$-modules, but this result will not be important to us.

\subsection{Modules of sections of a vector bundle}\label{sec:modules_sections}

In this section we look at finitely generated Hilbert $C^*$-modules over commutative $C^*$-algebras. If $A$ is commutative, then by Gelfand duality, $A$ is $*$-isomorphic to the $C^*$-algebra $C(X)$ of continuous functions on a uniquely determined compact Hausdorff space $X$.

A way to obtain Hilbert $C^*$-modules over $C(X)$ is from Hermitian vector bundles over $X$. If $\pi \colon E \to X$ is a Hermitian vector bundle, we denote by $\Gamma(E)$ the set of continuous sections of $E$, i.e.\ all continuous functions $s \colon X \to E$ such that $\pi \circ s = \id_X$. The set $\Gamma(E)$ has the structure of a finitely generated left Hilbert $C(X)$-module with the following left action and $C(X)$-valued inner product:
\begin{align}
(f \cdot s)(x) &= f(x) s(x) \label{eq:bundle1}, \\
\lhs{s}{t}(x) &= \langle s(x), t(x) \rangle_x. \label{eq:bundle2}
\end{align}
Here, $f \in C(X)$, $s,t \in \Gamma(E)$, $x \in X$ and $\langle \cdot, \cdot \rangle_x$ denotes the inner product on the fiber $E_x$ of $E$. We also use the notation $\| v \|_x^2 = \langle v, v \rangle_x$.

That $\Gamma(E)$ is finitely generated comes from the fact that for every vector bundle $E$ over a compact Hausdorff space $X$, there exists a vector bundle $F$ over $X$ such that $E \oplus F$ is isomorphic to a trivial bundle \cite[Corollary 5]{swan}.

The remarkable fact about finitely generated projective modules over $C(X)$ is that they all come from vector bundles over $X$. This is known as the Serre--Swan theorem. The version that we state here is slightly modified to include the inner product structures found on Hermitian vector bundles and Hilbert $C^*$-modules.

\begin{proposition}[Serre--Swan theorem]\label{prop:serre_swan}
Suppose $\mathcal{E}$ is a finitely generated left Hilbert $C(X)$-module. Then there exists a unique Hermitian vector bundle $E \to X$ such that $\Gamma(E) \cong \mathcal{E}$ as left Hilbert $C(X)$-modules. If $\mathcal{E}$ is represented as $C(X)^k P$ for a projection $P$ in $M_k(C(X))$, then $E$ can be constructed as the subbundle
\[ E = \{ (x,v) \in X \times \C^k : vP(x) = v \} \]
of the trivial bundle of rank $k$.
\end{proposition}

\begin{proof}
Note that by \Cref{prop:frame_proj}, $\mathcal{E}$ is projective. By the Serre--Swan theorem for finitely generated projective $C(X)$-modules \cite{swan}, there exists a unique complex vector bundle $E \to X$ such that $\Gamma(E) \cong \mathcal{E}$ as left $C(X)$-modules. The description of $E$ as a subbundle of $X \times \C^k$ also follows from \cite{swan}. We suppress the isomorphism from the notation and think of $\mathcal{E}$ as being equal to $\Gamma(E)$.

We define an inner product on the fibers of $E$ as follows: Let $x \in X$, $v,w \in E_x$, and pick sections $s,t \in \Gamma(E)$ such that $s(x) = v$ and $t(x) = w$. Set $\langle v, w \rangle_x = \lhs{s}{t}(x)$. This is independent of the choice of sections, and will give us the required inner product on $E$.
\end{proof}

The next proposition describes generating sets of $\Gamma(E)$ as a left Hilbert $C(X)$-module, where $E \to X$ is a Hermitian vector bundle. Recall that a line bundle is a vector bundle where the dimension of the fibers is equal to 1.

\begin{proposition}\label{prop:frame_sections_char}
Let $E$ be a Hermitian vector bundle over a compact Hausdorff space $X$. Let $s_1, \ldots, s_k \in \Gamma(E)$. The following are equivalent:
\begin{enumerate}
\item The set $\{s_1, \ldots, s_k \}$ is a generating set for $\Gamma(E)$ as a left $C(X)$-module.
\item For every $x \in X$, we have that
\[ \spn \{ s_1(x), \ldots, s_k(x) \} = E_x .\]
\end{enumerate}
\end{proposition}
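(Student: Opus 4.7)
The plan is to handle the two directions separately, using the frame-operator criterion in \cref{prop:fingen_module} for the harder direction. The key observation throughout is that, by \eqref{eq:bundle1}, \eqref{eq:bundle2} and \eqref{eq:module_ft}, the frame operator $\Theta$ associated with the finite sequence $(s_1,\ldots,s_k)$ acts pointwise on sections: for $s \in \Gamma(E)$ and $x \in X$,
\[ (\Theta s)(x) = \sum_{j=1}^k \langle s(x), s_j(x) \rangle_x \, s_j(x) = \Theta_x(s(x)), \]
where $\Theta_x \in \operatorname{End}(E_x)$ is the usual frame operator of the finite sequence $(s_1(x),\ldots,s_k(x))$ in the finite-dimensional Hilbert space $E_x$. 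An elementary finite-dimensional computation (or \cref{prop:fingen_module} applied to $E_x$) gives that $\Theta_x$ is invertible if and only if $\{s_1(x),\ldots,s_k(x)\}$ spans $E_x$.

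For the direction (i)$\Rightarrow$(ii) I would argue directly, without going through $\Theta$. Given $x \in X$ and $v \in E_x$, first produce a continuous section $t \in \Gamma(E)$ with $t(x) = v$: choose a local trivialization around $x$, take the constant section equal to $v$ in that trivialization, and multiply by a bump function that equals $1$ at $x$ and is supported in the chart. If $\{s_1, \ldots, s_k\}$ generates $\Gamma(E)$, write $t = \sum_j f_j s_j$ with $f_j \in C(X)$; evaluating at $x$ yields $v = \sum_j f_j(x) s_j(x) \in \spn\{s_1(x),\ldots,s_k(x)\}$.

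For (ii)$\Rightarrow$(i), by \cref{prop:fingen_module} it suffices to show that $\Theta$ is invertible in $\mathcal{L}(\Gamma(E))$. By the pointwise description above together with the fibrewise frame fact, (ii) is equivalent to $\Theta_x$ being invertible in $\operatorname{End}(E_x)$ for every $x \in X$. The obvious candidate for the inverse is the operator defined fibrewise by $(\Theta^{-1} s)(x) := \Theta_x^{-1}(s(x))$. The main obstacle will be verifying that this formula really produces a continuous section whenever $s$ is continuous, i.e.\ that $x \mapsto \Theta_x^{-1}$ is a continuous section of the endomorphism bundle $\operatorname{End}(E)$. I would verify this by working in local trivializations, where $\Theta_x$ becomes a continuous family of invertible positive matrices: then Cramer's rule (or equivalently the norm-continuity of inversion on invertible operators in finite dimensions, together with the uniform bound on $\|\Theta_x^{-1}\|$ afforded by compactness of $X$) shows continuity of the inverse family. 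Once this is in place, $\Theta^{-1}$ is a $C(X)$-linear two-sided inverse of $\Theta$ which is automatically adjointable (being self-adjoint fibrewise), so $\Theta$ is invertible in $\mathcal{L}(\Gamma(E))$ and \cref{prop:fingen_module} completes the proof.
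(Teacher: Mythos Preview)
Your argument is correct. The paper's proof shares your core observation that the module frame operator $\Theta$ acts fibrewise via the Hilbert-space frame operators $\Theta_x$, but it packages both implications symmetrically through \cref{prop:fingen_module}: rather than doing (i)$\Rightarrow$(ii) directly with a bump-function section as you do, it goes via invertibility of $\Theta$ in both directions. For the passage between invertibility of $\Theta$ on $\Gamma(E)$ and invertibility of each $\Theta_x$, the paper invokes Swan's correspondence between $C(X)$-linear maps on $\Gamma(E)$ and bundle endomorphisms, together with the standard fact (Milnor--Stasheff) that a bundle morphism is invertible iff it is fibrewise invertible; you instead construct the inverse by hand in local trivializations using continuity of matrix inversion. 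Your route is more self-contained and avoids external citations, while the paper's is shorter and treats the two implications uniformly. Either way the substance is the same.
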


\begin{proof}
The frame operator corresponding to the Bessel sequence $(s_j)_{j=1}^k$ in $\Gamma(E)$ is given by
\begin{equation}
\ft s = \sum_{j=1}^k \lhs{s}{s_j}s_j \label{eq:ft_sections}
\end{equation}
for $s \in \Gamma(E)$. This is a $C(X)$-linear operator $\Gamma(E) \to \Gamma(E)$, so by \cite[Theorem 1]{swan} there exists a unique vector bundle homomorphism $\theta : E \to E$ such that
\[ (\ft s)(x) = \theta_x(s(x)) \]
for all $s \in \Gamma(E)$ and $x \in X$, where $\theta_x$ denotes the restriction of $\theta$ to $E_x$. For each $x \in X$, let $\Theta_x : E_x \to E_x$ denote the frame operator of the sequence $\{ s_1(x), \ldots, s_k(x) \}$ in the Hilbert space $E_x$. By \eqref{eq:ft_sections}, we have that
\begin{equation}
\theta_x(s(x)) = \sum_{j=1}^k \lhs{s}{s_j} s_j(x) = \sum_{j=1}^k \langle s(x), s_j(x) \rangle_x s_j(x) = \Theta_x (s(x)) \label{eq:bundle_operators}
\end{equation}
for all $s \in \Gamma(E)$ and $x \in X$. Given $v \in E_x$, then letting $s \in \Gamma(E)$ be such that $s(x) = v$, we obtain that $\theta_x(v) = \Theta_x(v)$. Hence, by \eqref{eq:bundle_operators}, $\theta_x = \Theta_x$ for all $x \in X$.

By \Cref{prop:fingen_module}, $\{ s_1, \ldots, s_k \}$ is a generating set for $\Gamma(E)$ if and only if the frame operator $\Theta$ is invertible in $\Gamma(E)$. But this happens if and only if $\theta$ is invertible as a map of vector bundles. This is equivalent to each $\theta_x$ being invertible for every $x \in X$ by \cite[Lemma 2.3]{milnor}. This is equivalent to $\{ s_1(x), \ldots, s_k(x) \}$ being a generating set for $E_x$ for each $x \in X$, using \Cref{prop:fingen_module}. This finishes the proof.
\end{proof}

We immediately obtain the following corollary:

\begin{corollary}\label{cor:singlegen_bundle}
Suppose $E$ is a line bundle, and that $s_1, \ldots, s_k \in \Gamma(E)$. Then $\{ s_1, \ldots, s_k \}$ is a generating set for $\Gamma(E)$ if and only if for every $x \in X$, there exists $j \in \{ 1, \ldots, k \}$ such that $s_j(x) \neq 0$. In particular, if $s \in \Gamma(E)$, then $\{ s \}$ is a (single) generator for $\Gamma(E)$ if and only if $s$ is nonvanishing on $X$. Consequently, the existence of a single generator for $\Gamma(E)$ is equivalent to the triviality of $E$.
\end{corollary}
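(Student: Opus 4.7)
The plan is to read the first claim straight off of \cref{prop:frame_sections_char} using the one-dimensionality of the fibers, then deduce the remaining assertions by specializing to $k=1$ and invoking the standard characterization of trivial line bundles via nonvanishing sections.

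For the first equivalence, I would apply \cref{prop:frame_sections_char} to the sections $s_1,\ldots,s_k$. It yields that $\{s_1,\ldots,s_k\}$ generates $\Gamma(E)$ if and only if $\spn\{s_1(x),\ldots,s_k(x)\} = E_x$ for every $x \in X$. Since $E$ is a line bundle, each fiber $E_x$ is a one-dimensional complex vector space, so a finite set of vectors in $E_x$ spans $E_x$ precisely when at least one of them is nonzero. This gives the stated criterion. Taking $k=1$ immediately yields the second sentence: $\{s\}$ generates $\Gamma(E)$ if and only if $s$ is nowhere vanishing on $X$.

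For the final sentence, I need the standard fact that a complex line bundle $E \to X$ is trivial if and only if it admits a nowhere-vanishing continuous global section. The ``only if'' direction is clear, since the trivial bundle $X \times \C$ has the section $x \mapsto (x,1)$. For the ``if'' direction, given a nowhere-vanishing $s \in \Gamma(E)$, the map $X \times \C \to E$ sending $(x,\lambda) \mapsto \lambda s(x)$ is a fiberwise linear bijection that is continuous, and since both bundles have one-dimensional fibers, this is a bundle isomorphism. Combining this with the preceding characterization of single generators gives the stated equivalence between the existence of a single generator for $\Gamma(E)$ and the triviality of $E$.

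There is no real obstacle here; the corollary is essentially a translation of \cref{prop:frame_sections_char} into the line bundle setting together with the elementary fact about trivial line bundles. The only mild care required is in the last step, where one should note that we are working with Hermitian line bundles, so one can even arrange the trivialization to be an isometric isomorphism by replacing $s$ with $s/\|s\|_x$, although this refinement is not needed for the statement as written.
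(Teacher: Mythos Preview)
Your proposal is correct and follows exactly the approach intended by the paper: the paper presents this as an immediate corollary of \cref{prop:frame_sections_char} with no written proof, and your argument spells out precisely the obvious steps (one-dimensionality of fibers, then the standard trivial-iff-nonvanishing-section fact for line bundles). The only difference is that you give more detail than the paper does.
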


Note that the notion of a module frame in $\Gamma(E)$ is different from the common notions of local and global frames of sections of vector bundles, see e.g.\ \cite[p. 257]{lee}. In particular, a global frame for a vector bundle is a basis at every fiber, not just a spanning set.

\section{Time-frequency analysis and Heisenberg modules}\label{sec:tfa}

\subsection{Time-frequency shifts and twisted group \texorpdfstring{$C^*$}{C*}-algebras}

For the rest of the paper, unless otherwise stated, $G$ will denote a second countable, locally compact abelian group. Denote by $\widehat{G}$ the Pontryagin dual of $G$. Given a closed subgroup $H$ of $G$, the \emph{annihilator} of $H$, denoted by $H^\perp$, is the set
\begin{equation}
    H^{\perp} = \{ \omega \in \widehat{G} : \text{ $\omega(x) = 1$ for all $x \in H$ } \} . \label{eq:annihilator} 
\end{equation}
This will always be a closed subgroup of $\widehat{G}$. We have natural isomorphisms $\widehat{H} \cong \widehat{G}/H^\perp$ and $\widehat{G/H} \cong H^\perp$.

We call a subgroup $H$ of $G$ \emph{cocompact} if the quotient $G/H$ is compact. A subgroup $\Lambda$ of $G$ that is both discrete and cocompact is called a \emph{lattice} in $G$. If $\Lambda$ is a lattice in $G$, then $\Lambda^{\perp}$ is a lattice in $\widehat{G}$ \cite[Lemma 3.1]{rieffel88}.\\



Given $x \in G$ and $\omega \in \widehat{G}$, the unitary linear operators $T_x$ and $M_\omega$ on $L^2(G)$ given by
\begin{align}\label{eq:time_frequency}
T_x \xi(t) &= \xi(x^{-1}t) & M_\omega \xi(t) &= \omega(t) \xi(t)
\end{align}
for $\xi \in L^2(G)$, $t \in G$, are called \emph{time shift by $x$} and \emph{frequency shift by $\omega$}, respectively. These two operators satisfy the following commutation relation:
\begin{equation}\label{eq:commutation_relation}
M_\omega T_x = \omega(x) T_x M_\omega.
\end{equation}
A \emph{time-frequency shift} is a combined operator of the form $\pi(x,\omega) = M_\omega T_x$. From \eqref{eq:commutation_relation}, it follows that
\begin{equation}
\pi(x,\omega)\pi(y,\tau) = \overline{\tau(x)} \pi(xy,\omega\tau)
\end{equation}
for $(x,\omega),(y,\tau) \in G \times \widehat{G}$. The function $c \colon (G \times \widehat{G}) \times (G \times \widehat{G}) \to \T$ given by
\begin{equation}
c((x,\omega),(y,\tau)) = \overline{\tau(x)} \label{eq:cocycle}
\end{equation}
satisfies the identities
\begin{align*}
c(z_1,z_2)c(z_1z_2,z_3) &= c(z_1,z_2z_3)c(z_2,z_3) \\
c(1,1) &= 1
\end{align*}
for all $z_1,z_2,z_3 \in G \times \widehat{G}$ and with $1$ denoting the identity element of $G \times \widehat{G}$. Thus, when restricting $c$ to any lattice $\Delta$ in $G \times \widehat{G}$, we obtain a 2-cocycle on $\Delta$. Restricting also $\pi$ to $\Delta$, we obtain a \emph{$c$-projective unitary representation} of the group $\Delta$ on the Hilbert space $L^2(G)$ \cite[Chapter D3]{Wi07}.

Whenever we have a 2-cocycle on a discrete group, we can associate to it the corresponding \emph{twisted group $C^*$-algebra}. In our case, we have the twisted group $C^*$-algebra $C^*(\Delta, c)$, which is the $C^*$-enveloping algebra of the Banach $*$-algebra $\ell^1(\Delta,c)$. As a Banach space, $\ell^1(\Delta,c)$ is $\ell^1(\Delta)$, but the multiplication is \emph{$c$-twisted convolution} and the involution is \emph{$c$-twisted involution}. These operations are defined as follows, where $a,b \in \ell^1(\Delta)$ and $z \in \Delta$:
\begin{align*}
(a * b)(z) &= \sum_{w \in \Delta} c(w,w^{-1}z) a(w)b(w^{-1}z) \\
a^*(z) &= \overline{c(z,z^{-1}) a(z^{-1})} .
\end{align*}
This $C^*$-algebra captures the representation theory of $\Delta$ in the following way: The $c$-projective unitary representations of $\Delta$ are in 1--1 correspondence with nondegenerate $*$-representations of $C^*(\Delta,c)$. In particular, our $c$-projective representation $\pi|_\Delta$ lifts to a representation $C^*(\Delta,c) \to \mathcal{B}(L^2(G))$, which we by abuse of notation also denote by $\pi$. On the dense $*$-subalgebra $\ell^1(\Delta,c)$, $\pi$ is given by
\begin{equation}
\pi(a) = \sum_{z \in \Delta} a(z) \pi(z)
\end{equation}
for $a \in \ell^1(\Delta,c)$. This representation is faithful \cite[Proposition 2.2]{rieffel88}.\\

\subsection{Gabor frames}
As before, let $G$ be a second countable, locally compact abelian group. Let $\Delta$ be a lattice in $G \times \widehat{G}$, and let $\eta \in L^2(G)$. The \emph{Gabor system generated by $\eta$ over $\Delta$} is the set
\begin{equation}
    \mathcal{G}(\eta,\Delta) = \{ \pi(z) \eta : z \in \Delta \} . \label{eq:gabor_system}
\end{equation}
If this set is a frame for the Hilbert space $L^2(G)$ as in \eqref{eq:frame_def}, we call it a \emph{Gabor frame} over $\Delta$ with generator $\eta$. That is, $\mathcal{G}(\eta,\Delta)$ is a Gabor frame if there exist $\lfb, \ufb > 0$ such that
\[ \lfb \| \xi \|_2^2 \leq \sum_{z \in \Delta} | \langle \xi, \pi(z) \eta \rangle |^2 \leq \ufb \| \xi \|_2^2 \]
for all $\xi \in L^2(G)$.

Generalizing, if $\eta_1, \ldots, \eta_k \in L^2(G)$, then the \emph{multiwindow Gabor system} generated by the $\eta_1, \ldots, \eta_k$ over $\Delta$ is the set
\begin{equation}
    \mathcal{G}(\eta_1, \ldots, \eta_k ; \Delta) \coloneqq \{ \pi(z) \eta_j  : z \in \Delta, 1 \leq j \leq k \} . \label{req:multiwindow_system}
\end{equation}
If this set is a frame for $L^2(G)$, then we call it a \emph{multiwindow Gabor frame} over $\Delta$ with generators $\eta_1, \ldots, \eta_k$.

\begin{remark}
It is possible to consider Gabor systems over closed subgroups $\Delta$ of $G \times \widehat{G}$ that are not discrete. This leads to the notion of continuous Gabor systems and frames, see \cite{jakobsen_density}. In our case, we will eventually consider $\Delta$ to be of the form $\Lambda \times \Lambda^{\perp}$, with $\Lambda$ a closed subgroup of $G$. It turns out that for frames to exist over such a closed subgroup of $G \times \widehat{G}$, $\Lambda$ must be a lattice in $G$ \cite[Corollary 5.8]{jakobsen_density}, so we will not lose anything interesting by assuming that $\Delta$ is discrete.
\end{remark}

The \emph{Feichtinger algebra} is the set $S_0(G)$ of all functions $\xi \in L^2(G)$ for which
\begin{equation}
    \int_{G \times \widehat{G}} | \langle \xi, \pi(z) \xi \rangle | \, \dee z < \infty . \label{eq:def_feichtinger}
\end{equation}
The Feichtinger algebra has many different descriptions, see \cite{notnew}. Functions in $S_0(G)$ are continuous.

\subsection{Heisenberg modules}\label{sec:heismodules}

As usual, let $G$ denote a second countable, locally compact abelian group. Let $\Delta$ be a lattice in the time-frequency plane $G \times \widehat{G}$.

We now proceed to review the construction by Rieffel in \cite{rieffel88} of the modules that have been termed \emph{Heisenberg modules}. We follow the approach by Jakobsen and Luef in \cite{gaborheis} and use the Feichtinger algebra $S_0(G)$ instead of the Schwartz--Bruhat space $\mathscr{S}(G)$ from \cite{rieffel88}. Although the Heisenberg module has a natural structure as an imprimitivity bimodule, only the left module structure will be important to us.

By the proof of \cite[Theorem 3.4]{gaborheis}, we have the following:

\begin{proposition}\label{prop:heis}
Let $G$ be a second countable, locally compact abelian group, and let $\Delta$ be a lattice in $G \times \widehat{G}$. Then the Feichtinger algebra $S_0(G)$ can be completed into a full finitely generated left Hilbert $C^*(\Delta,c)$-module which we denote by $\heis{G}{\Delta}$. If $a \in \ell^1(\Delta,c)$ and $\xi \in S_0(G)$, then $a \cdot \xi \in S_0(G)$ and is given by
\begin{equation}
a \cdot \xi = \sum_{z \in \Delta} a(z) \pi(z) \xi \label{eq:module_act}.
\end{equation}
If $\xi, \eta \in S_0(G)$, then $\lhs{\xi}{\eta} \in \ell^1(\Delta,c)$ and is given by
\begin{equation}
\lhs{\xi}{\eta}(z) = \langle \xi, \pi(z) \eta \rangle  \label{eq:inner_prod}
\end{equation}
for $z \in \Delta$.
\end{proposition}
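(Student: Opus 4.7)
The plan is to equip $S_0(G)$ with the prescribed action and $C^*(\Delta,c)$-valued inner product, verify the pre-Hilbert $C^*$-module axioms, complete in the module norm, and then establish finite generation and fullness separately. The two key analytic inputs from the theory of the Feichtinger algebra are: (a) $S_0(G)$ is isometrically invariant under every time-frequency shift, $\|\pi(z)\xi\|_{S_0} = \|\xi\|_{S_0}$; and (b) for $\xi, \eta \in S_0(G)$, the short-time Fourier transform $z \mapsto \langle \xi,\pi(z)\eta\rangle$ lies in the Wiener amalgam space $W(C_0,\ell^1)(G\times\widehat{G})$, so that in particular its restriction to any lattice is absolutely summable.

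Using (a), the series defining $a\cdot\xi$ in \eqref{eq:module_act} converges absolutely in $S_0(G)$ whenever $a \in \ell^1(\Delta)$ and $\xi \in S_0(G)$. The twisted associativity $(a*b)\cdot\xi = a\cdot(b\cdot\xi)$ follows by a direct computation from the commutation relation \eqref{eq:commutation_relation} together with the definition \eqref{eq:cocycle} of $c$. By (b), the formula $\lhs{\xi}{\eta}(z) = \langle \xi,\pi(z)\eta\rangle$ defines an element of $\ell^1(\Delta,c)$, and the axioms of sesquilinearity and the $*$-symmetry $\lhs{\xi}{\eta}^* = \lhs{\eta}{\xi}$ are mechanical to verify once one uses $\pi(z)^* = \overline{c(z,z^{-1})}\pi(z^{-1})$ and the cocycle identity for $c$.

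The delicate axiom is positivity of $\lhs{\xi}{\xi}$ in $C^*(\Delta,c)$. Since the integrated representation $\pi\colon C^*(\Delta,c)\to\mathcal{B}(L^2(G))$ is faithful, it suffices to show that $\pi(\lhs{\xi}{\xi})=\sum_{z\in\Delta}\langle\xi,\pi(z)\xi\rangle\pi(z)$ is a positive operator. I would obtain this from the \emph{fundamental identity of Gabor analysis}, a Poisson-summation formula for the STFT, which for $\eta \in S_0(G)$ yields
\[ \langle \pi(\lhs{\xi}{\xi})\eta,\eta \rangle = \sum_{z \in \Delta} \langle \xi,\pi(z)\xi\rangle \langle \pi(z)\eta,\eta\rangle = \vol(\Delta)^{-1}\sum_{w \in \Delta^\circ} |\langle \xi,\pi(w)\eta\rangle|^2 \geq 0, \]
where $\Delta^\circ$ denotes the adjoint lattice in $G\times\widehat{G}$; density of $S_0(G)$ in $L^2(G)$ extends the inequality to all of $L^2(G)$. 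Completing $S_0(G)$ in $\|\xi\|_\mathcal{E} = \|\lhs{\xi}{\xi}\|^{1/2}$ then yields the Hilbert $C^*$-module $\heis{G}{\Delta}$.

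Finite generation and fullness both reduce to exhibiting a finite family $\eta_1,\ldots,\eta_k \in S_0(G)$ whose frame operator $\sum_j \lhs{\cdot}{\eta_j}\eta_j$ is invertible in $\mathcal{L}(\heis{G}{\Delta})$, equivalently a multiwindow Gabor frame over $\Delta$ with windows in $S_0(G)$. Such frames can be produced by a standard oversampling construction: take a nonnegative bump function in $S_0(G)$ with sufficiently localized time-frequency support and use enough windows to overcome $\vol(\Delta)$. Then \cref{prop:fingen_module} yields finite generation, while fullness follows because the invertibility of $\sum_j \lhs{\eta_j}{\eta_j}$ in $C^*(\Delta,c)$ forces the closed two-sided ideal generated by $\{\lhs{\xi}{\eta}:\xi,\eta\in S_0(G)\}$ to be all of $C^*(\Delta,c)$. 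The main obstacle is the positivity step; once FIGA is invoked, the remainder is a careful bookkeeping of classical properties of $S_0(G)$.
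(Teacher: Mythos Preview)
The paper does not prove this proposition; it imports it wholesale from \cite[Theorem 3.4]{gaborheis}. Your sketch is essentially the Rieffel/Jakobsen--Luef argument that reference carries out, and the action, $\ell^1$-summability of the inner product, positivity via FIGA, and completion steps are all sound.

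There is, however, a genuine gap in your fullness argument. You assert that $\sum_j \lhs{\eta_j}{\eta_j}$ is invertible in $C^*(\Delta,c)$ once $\{\eta_j\}$ is a multiwindow Gabor frame over $\Delta$, but this is not what the frame property gives. The Gabor frame operator is the \emph{module} frame operator $\Theta\xi = \sum_j \lhs{\xi}{\eta_j}\cdot\eta_j \in \mathcal{L}(\heis{G}{\Delta})$, not the image under $\pi$ of the fixed element $a=\sum_j\lhs{\eta_j}{\eta_j}$. In fact, another application of FIGA shows
\[
\langle \pi(a)\xi,\xi\rangle = \vol(\Delta)^{-1}\sum_j\sum_{w\in\Delta^\circ}|\langle \xi,\pi(w)\eta_j\rangle|^2,
\]
so $\pi(a)$ is bounded below precisely when $\{\eta_j\}$ is a multiwindow Gabor frame over the \emph{adjoint} lattice $\Delta^\circ$, not over $\Delta$. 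Finite generation of a Hilbert $A$-module does not by itself force fullness (take $A=\C^2$, $\mathcal{E}=\C$ with $(a,b)\cdot z = az$). The fix is either to choose a separate family $\{\gamma_j\}\subset S_0(G)$ that is a multiwindow frame over $\Delta^\circ$---the same existence result applied to $\Delta^\circ$ supplies one---so that $\sum_j\lhs{\gamma_j}{\gamma_j}$ is invertible, or, as Rieffel and Jakobsen--Luef actually do, to develop the right $C^*(\Delta^\circ,\overline{c})$-module structure simultaneously and obtain fullness on both sides from the imprimitivity bimodule axioms. A smaller point: the ``oversampling construction'' for multiwindow $S_0$-frames over an arbitrary lattice is not as routine as you suggest; it requires a genuine compactness/covering argument as in \cite[Proposition 3.3]{rieffel88} or \cite[Theorem 3.9]{gaborheis}.
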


The module $\heis{G}{\Delta}$ is called a \emph{Heisenberg module}. Note that $\heis{G}{\Delta}$ is finitely generated in our case since $\Delta$ is assumed to be a lattice in $G \times \widehat{G}$ (see the proof of \cite[Proposition 3.3]{rieffel88} and \cite[Theorem 3.9]{gaborheis}).

The following is proved in \cite[Proposition 3.2, Proposition 3.6, Theorem 3.11]{AuEn19}, and gives a characterization of generators of Heisenberg modules as exactly multiwindow Gabor frames as in \eqref{req:multiwindow_system}.

\begin{proposition}\label{prop:ausens}
Let $\Delta$ be a lattice in $G \times \widehat{G}$. Then the following hold:
\begin{enumerate}
    \item\label{it:ausens1} There is a continuous embedding of the Heisenberg module $\heis{G}{\Delta}$ into $L^2(G)$ that makes the following diagram commute:
\begin{center}
\begin{tikzpicture}
  \matrix (m) [matrix of math nodes,row sep=3em,column sep=4em,minimum width=2em]
  {
     S_0(G) & \heis{G}{\Delta} \\
      & L^2(G) \\};
  \path[-stealth]
    (m-1-1) edge node {} (m-1-2)
    (m-1-1) edge node {} (m-2-2)
    (m-1-2) edge node {} (m-2-2);
\end{tikzpicture}
\end{center}
    Under this embedding, we have that
    \begin{equation}
        \| \eta \|_{2} \leq \| \eta \|_{\heis{G}{\Delta}} \label{eq:norm_decreasing}
    \end{equation}
    for every $\eta \in \heis{G}{\Delta}$.
    \item\label{it:ausens2} The norm on $\heis{G}{\Delta}$ is given by
    \[ \| \eta \|_{\heis{G}{\Delta}} = \sup_{\| \xi \|_2 = 1} \left( \sum_{z \in \Delta} | \langle \xi, \pi(z) \eta \rangle |^2 \right)^{1/2} \]
    for $\eta \in \heis{G}{\Delta}.$ Thus, $\heis{G}{\Delta}$ can be described as the completion of $S_0(G)$ in the Banach space consisting of those $\eta \in L^2(G)$ for which $\mathcal{G}(\eta,\Delta)$ is a Bessel sequence for $L^2(G)$.
    \item\label{it:ausens3} A subset $\{ \eta_1, \ldots, \eta_k \}$ of $\heis{G}{\Delta}$ is a generating set for $\heis{G}{\Delta}$ if and only if $\mathcal{G}(\eta_1, \ldots, \eta_k ; \Delta)$ is a multiwindow Gabor frame for $L^2(G)$ (see \eqref{eq:gabor_system}).
\end{enumerate}
\end{proposition}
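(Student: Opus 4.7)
The three claims are tightly connected, and my plan is to prove part (ii) first---it gives the most concrete description of the module norm---and then leverage it to prove (i) and finally (iii). The key bridge between the $C^*$-algebraic and Hilbert-space pictures is the faithful integrated representation $\pi\colon C^*(\Delta,c)\to\mathcal{B}(L^2(G))$, which for $\eta \in S_0(G)$ gives the identification
\[ \|\eta\|_{\heis{G}{\Delta}}^2 = \|\lhs{\eta}{\eta}\|_{C^*(\Delta,c)} = \|\pi(\lhs{\eta}{\eta})\|_{\mathrm{op}}. \]

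For (ii), I would introduce the analysis operator $T_\eta\colon L^2(G)\to\ell^2(\Delta)$, $T_\eta\xi = (\langle\xi,\pi(z)\eta\rangle)_z$, whose operator norm is exactly the supremum in the claim. Since $T_\eta^* T_\eta = S_\eta$, where $S_\eta\xi = \sum_z \langle\xi,\pi(z)\eta\rangle\pi(z)\eta$ is the Gabor frame operator, one has $\|T_\eta\|^2 = \|S_\eta\|_{\mathrm{op}}$. The claim then reduces to the equality $\|\pi(\lhs{\eta}{\eta})\|_{\mathrm{op}} = \|S_\eta\|_{\mathrm{op}}$. These two positive operators are not equal in general---$\pi(\lhs{\eta}{\eta})$ lies in the von Neumann algebra generated by $\pi(\Delta)$, while $S_\eta$ lies in its commutant---but their operator norms coincide via the Fundamental Identity of Gabor Analysis (equivalently, Janssen's representation), which rewrites $S_\eta$ as an integrated representation over the adjoint lattice $\Delta^\circ$. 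The formula then extends to all $\eta \in \heis{G}{\Delta}$ by density.

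Part (i) follows quickly: for $\eta \in S_0(G)$ with $\eta \neq 0$, testing $\xi = \eta/\|\eta\|_2$ in the formula from (ii) gives $\|\eta\|_{\heis{G}{\Delta}} \geq \|\eta\|_2$. The inclusion $S_0(G)\hookrightarrow L^2(G)$ therefore extends by uniform continuity to a bounded linear map $\iota\colon\heis{G}{\Delta}\to L^2(G)$ satisfying \eqref{eq:norm_decreasing}, and commutativity of the diagram is built in. Injectivity of $\iota$ needs a short separate argument: a sequence $(\eta_n)$ in $S_0(G)$ that is Cauchy in $\heis{G}{\Delta}$ and tends to $0$ in $L^2(G)$ has $\langle\eta_n,\pi(z)\eta_n\rangle\to 0$ for each $z$, and combining this with the Cauchy property forces $\lhs{\eta_n}{\eta_n}\to 0$ in $C^*(\Delta,c)$, so $\|\eta_n\|_{\heis{G}{\Delta}}\to 0$.

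For (iii), I apply \cref{prop:fingen_module}: $\{\eta_1,\dots,\eta_k\}$ generates $\heis{G}{\Delta}$ iff the module frame operator $\Theta_{\mathrm{mod}}\xi = \sum_j \pi(\lhs{\xi}{\eta_j})\eta_j$ is invertible in $\mathcal{L}(\heis{G}{\Delta})$. Expanding $\pi(\lhs{\xi}{\eta_j}) = \sum_z \langle\xi,\pi(z)\eta_j\rangle\pi(z)$, one sees that $\Theta_{\mathrm{mod}}$ agrees on $S_0(G)$ with the multiwindow Gabor frame operator $S_{\eta_1,\dots,\eta_k}$ on $L^2(G)$. Invertibility then transfers between the two settings using (i) and (ii): the module inverse is $L^2$-bounded since $\|\cdot\|_2 \leq \|\cdot\|_{\heis{G}{\Delta}}$, while conversely a Gabor frame yields an $L^2$-inverse that restricts to $\heis{G}{\Delta}$, again via a Janssen-type argument on the adjoint lattice. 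The principal obstacle throughout is the norm identity $\|\pi(\lhs{\eta}{\eta})\|_{\mathrm{op}} = \|S_\eta\|_{\mathrm{op}}$ in (ii), which is essentially a manifestation of the Morita equivalence between $C^*(\Delta,c)$ and $C^*(\Delta^\circ,\bar c)$ implicit in Rieffel's bimodule construction.
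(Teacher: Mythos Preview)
The paper does not give its own proof of this proposition: it is quoted verbatim from \cite[Proposition 3.2, Proposition 3.6, Theorem 3.11]{AuEn19}, with only the Remark following it explaining why the constant $s(\Delta)^{1/2}$ can be dropped from \eqref{eq:norm_decreasing} in the lattice case (namely, one uses the canonical trace $\tr_A$ on $C^*(\Delta,c)$, which is a state, rather than $\tr_B$). So there is no detailed in-paper argument to compare against.

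Your outline is essentially the right one and matches the strategy of \cite{AuEn19}. A few comments. For (ii), the identity $\|\pi(\lhs{\eta}{\eta})\|=\|S_\eta\|$ is indeed the crux, and your diagnosis is correct: it is cleanest to phrase it as the equality $\|\lhs{\eta}{\eta}\|_{C^*(\Delta,c)}=\|\rhs{\eta}{\eta}\|_{C^*(\Delta^\circ,\bar c)}$ for an imprimitivity bimodule, combined with Janssen's representation $S_\eta=\pi^\circ(\rhs{\eta}{\eta})$ and faithfulness of $\pi^\circ$. Your derivation of \eqref{eq:norm_decreasing} in (i) by testing $\xi=\eta/\|\eta\|_2$ and keeping only the $z=e$ term is a pleasant elementary alternative to the trace argument sketched in the paper's Remark; both give the inequality without constants once measures are normalised appropriately. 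For (iii), the direction ``module generating set $\Rightarrow$ Gabor frame'' is most transparently obtained by applying the faithful tracial state to the module frame inequality (this is the ``localisation'' step in \cite{AuEn19}); your reverse direction is the one that needs the most care, and the clean way to finish it is to use that $\mathcal{L}_{C^*(\Delta,c)}(\heis{G}{\Delta})\cong C^*(\Delta^\circ,\bar c)$ acts faithfully on $L^2(G)$ via $\pi^\circ$, so invertibility of $S$ on $L^2(G)$ is equivalent to invertibility of $\Theta_{\mathrm{mod}}$ in $\mathcal{L}(\heis{G}{\Delta})$. You gesture at this (``Janssen-type argument on the adjoint lattice''), but making the identification $\mathcal{L}(\heis{G}{\Delta})\cong C^*(\Delta^\circ,\bar c)$ explicit is what closes the loop.
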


\begin{remark}
Note that in \eqref{eq:norm_decreasing}, there is no appearance of the constant $s(\Delta)^{1/2}$ as in \cite[Proposition 3.2]{AuEn19}. The reason for this is that in the case of a lattice, both traces in the setting of \cite[Convention 3.1]{AuEn19} are finite. Thus, one can consider the trace $\tr_A$ instead of $\tr_B$ when proving the inequality of the $L^2(G)$-norm and the Heisenberg module norm. Since $\tr_A$ is a state in this setting, we have $\| \tr_A \| = 1$, so we obtain \eqref{eq:norm_decreasing}.
\end{remark}

We are now ready to obtain a useful reformulation of \Cref{problem:bl} in terms of Heisenberg modules:

\begin{proposition}\label{prop:balian_reform}
Let $\Delta = \Lambda \times \Lambda^{\perp}$ for a lattice $\Lambda$ in $G$. Then the following are equivalent:
\begin{enumerate}
\item\label{it:hes1} The Heisenberg module $\heis{G}{\Delta}$ is not singly generated.
\item\label{it:hes2} For every $\eta \in \heis{G}{\Delta}$, the Gabor system $\mathcal{G}(\eta,\Delta)$ is not a frame for $L^2(G)$.
\item\label{it:hes3} \Cref{problem:bl} holds for $(G,\Lambda)$. That is, for every $\eta \in S_0(G)$, the Gabor system $\mathcal{G}(\eta, \Delta)$ is not a frame for $L^2(G)$.
\end{enumerate}
\end{proposition}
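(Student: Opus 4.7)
The plan is to arrange the three statements in a short cycle and handle each equivalence with a different ingredient already packaged in the paper. I would first observe that (i) $\Leftrightarrow$ (ii) is a direct restatement: by the case $k=1$ of Proposition~\ref{prop:ausens}(iii), $\{\eta\}$ is a generating set for $\heis{G}{\Delta}$ if and only if $\mathcal{G}(\eta,\Delta)$ is a Gabor frame for $L^2(G)$, and negating both sides simultaneously over $\eta$ gives the equivalence. The implication (ii) $\Rightarrow$ (iii) is then immediate from the inclusion $S_0(G) \subseteq \heis{G}{\Delta}$ built into the construction of the Heisenberg module in Proposition~\ref{prop:heis}.

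The only nontrivial direction is (iii) $\Rightarrow$ (ii), which I would prove by contraposition through a density-plus-openness argument. Assume some $\eta_0 \in \heis{G}{\Delta}$ gives a Gabor frame. Then Proposition~\ref{prop:ausens}(iii) says $\{\eta_0\}$ generates $\heis{G}{\Delta}$, and Proposition~\ref{prop:fingen_module} says the corresponding frame operator $\Theta_{\eta_0}$ is invertible in the $C^*$-algebra $\mathcal{L}(\heis{G}{\Delta})$. The key observation is that $\eta \mapsto \Theta_\eta$ is continuous from $\heis{G}{\Delta}$ into $\mathcal{L}(\heis{G}{\Delta})$, and invertibles form an open set in any unital $C^*$-algebra. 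Combining this with the density of $S_0(G)$ in $\heis{G}{\Delta}$ (which holds since $\heis{G}{\Delta}$ is defined as a completion of $S_0(G)$), I can select $\eta \in S_0(G)$ close enough to $\eta_0$ that $\Theta_\eta$ remains invertible. Running Propositions~\ref{prop:fingen_module} and \ref{prop:ausens}(iii) in the other direction, $\{\eta\}$ is then a generator and so $\mathcal{G}(\eta,\Delta)$ is a Gabor frame for $L^2(G)$, contradicting (iii).

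The one genuine, though modest, technicality is the continuity of $\eta \mapsto \Theta_\eta$. I would factor $\Theta_\eta = \sy_\eta \an_\eta$, where $\sy_\eta : a \mapsto a \cdot \eta$ is the single-generator synthesis operator $C^*(\Delta,c) \to \heis{G}{\Delta}$ and $\an_\eta : \xi \mapsto \lhs{\xi}{\eta}$ is its adjoint. Both depend linearly on $\eta$, with operator norms controlled by $\| \eta \|_{\heis{G}{\Delta}}$: the first from the general Hilbert $C^*$-module bound $\| a \cdot \eta \| \leq \| a \| \, \| \eta \|$, the second by duality. A standard triangle-inequality estimate then gives $\|\Theta_{\eta} - \Theta_{\eta'}\|_{\mathrm{op}} \leq (\|\eta\| + \|\eta'\|)\|\eta - \eta'\|$, which is exactly the continuity needed. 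This is the only step where one actually leaves the abstract machinery of Section~\ref{sec:hilbertmodules} and touches the specific structure of $\heis{G}{\Delta}$; everything else is pure transcription between the frame-theoretic, module-theoretic, and operator-theoretic viewpoints that Propositions~\ref{prop:fingen_module} and \ref{prop:ausens} have already identified.
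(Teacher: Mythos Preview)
Your argument is correct and takes a genuinely different route from the paper's. For the nontrivial implication (iii) $\Rightarrow$ (i), the paper invokes Rieffel's procedure from \cite[Proposition 3.7]{rieffel88}: starting from any generating set $\{\eta_1,\ldots,\eta_k\}$ in $\heis{G}{\Delta}$, one produces generators in $S_0(G)$ by applying suitable elements of $\ell^1(\Delta,c)$, and this works precisely because $\ell^1(\Delta,c)$ is spectrally invariant in $C^*(\Delta,c)$ (a result imported from \cite{GrLe04}). Your approach instead bypasses spectral invariance entirely, replacing it with the elementary observation that $\eta \mapsto \Theta_\eta$ is continuous into $\mathcal{L}(\heis{G}{\Delta})$ and that invertibles are open there, combined with the density of $S_0(G)$. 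This is lighter machinery and fully self-contained within the Hilbert $C^*$-module framework of \Cref{sec:hilbertmodules}; the paper's route, while citing a deeper external result, is the standard pattern in the Heisenberg-module literature and has the minor advantage of yielding an explicit recipe for the $S_0$-generator rather than a perturbative existence statement. Your continuity estimate $\|\Theta_\eta - \Theta_{\eta'}\| \leq (\|\eta\|+\|\eta'\|)\|\eta-\eta'\|$ is correct as stated, using Cauchy--Schwarz for the analysis map and the module-action bound for the synthesis map.
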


\begin{proof}
The equivalence of \ref{it:hes1} and \ref{it:hes2} follows immediately from \Cref{prop:ausens} \ref{it:ausens3}, and \ref{it:hes2} implies \ref{it:hes3} since $S_0(G) \subseteq \heis{G}{\Delta}$. It remains to prove that \ref{it:hes3} implies \ref{it:hes1}.

First, note the following: If $\{ \eta_1, \ldots, \eta_k \}$ is a module frame for $\heis{G}{\Delta}$, then using the same procedure as in the proof of \cite[Proposition 3.7]{rieffel88}, one obtains a new module frame $\{ \eta_1', \ldots, \eta_k' \}$ for $\heis{G}{\Delta}$ where $\eta_j' \in S_0(G)$ for $1 \leq j \leq k$. The argument works out because $\ell^1(\Delta,c)$, like the Schwartz--Bruhat space, is spectrally invariant in $C^*(\Delta,c)$. As pointed out in \cite[Theorem 2.2]{projmod}, this is a consequence of \cite[Theorem 3.1]{GrLe04} in the case of $G = \R^n$. However, as pointed out in \cite[p.\ 16]{GrLe04}, the proofs generalize in a straightforward manner to the locally compact abelian case.

Thus, the existence of a one-element module frame (i.e. a single generator) $\eta \in \heis{G}{\Delta}$ implies the existence of a one-element module frame (i.e. single generator) $\eta' \in S_0(G)$. Thus, if no single generators can be found in $S_0(G)$, no single generators can be found in $\heis{G}{\Delta}$, which finishes the proof.
\end{proof}

\section{Vector bundles and quasiperiodic functions}\label{sec:bundles_qp}

\subsection{The Zak transform}\label{sec:zakt}

Throughout this section, we assume that $G$ is a second countable, locally compact abelian group, and that $\Lambda$ is a lattice in $G$.

The \emph{Zak transform} \cite{gelfand,weil} of a function $\xi \in L^2(G)$ with respect to $\Lambda$ is the function $Z \xi = Z_{G,\Lambda} \xi \colon G \times \widehat{G} \to \C$ given by
\begin{equation}
Z \xi(x,\omega) = \sum_{\lambda \in \Lambda} \xi(x\lambda) \omega(\lambda) \label{eq:zak_def}
\end{equation}
for $(x,\omega) \in G \times \widehat{G}$. By \cite[Lemma 3]{kankut}, the Zak transform of a function $\xi \in L^2(G)$ is defined almost everywhere on $G \times \widehat{G}$.

The following result is proved for $G = \R^n$ in \cite[Lemma 8.2.1 (c)]{groechenig}. While the author believes the result for general locally compact abelian groups to be well-known, he was not able to find a reference for it, so it is proved in the appendix, see \Cref{prop:feichtinger_zak_cont_app}.

\begin{proposition}\label{prop:feichtinger_zak_cont}
Let $\Lambda$ be a lattice in a second countable, locally compact abelian group $G$. If $\xi \in S_0(G)$, then $Z_{G,\Lambda}\xi$ is continuous.
\end{proposition}

Now note that for $\xi \in L^2(G)$, $(x,\omega) \in G \times \widehat{G}$ and $(\lambda, \tau) \in \Lambda \times \Lambda^{\perp}$, we have the following:
\begin{align*}
Z\xi(x\lambda,\omega\tau) &= \sum_{\lambda' \in \Lambda} \xi(x\lambda \lambda') (\omega\tau)(\lambda') \\
&= \sum_{\lambda'\in \Lambda} \xi(x \lambda') \omega(\lambda' \lambda^{-1}) \\
&= \overline{\omega(\lambda)} Z\xi(x,\omega).
\end{align*}
Thus, if $\xi \in S_0(G)$, then by \Cref{prop:feichtinger_zak_cont} and the above calculation, $Z\xi$ is an element of the following function space:

\begin{definition}\label{def:quasi}
Denote by $\sec{G}{\Lambda}$ the complex vector space of continuous functions $F \colon G \times \widehat{G} \to \C$ satisfying the relation
\begin{equation}
F(x \lambda, \omega \tau) = \overline{\omega(\lambda)} F(x,\omega) \label{eq:quasi}
\end{equation}
for all $(x,\omega) \in G \times \widehat{G}$ and $(\lambda, \tau) \in \Lambda \times \Lambda^{\perp}$.
\end{definition}

The relation in \eqref{eq:quasi} is referred to as a \emph{quasiperiodicity relation} in the literature \cite{kankut}, so we will call elements of $\sec{G}{\Lambda}$ continuous, quasiperiodic functions on $G \times \widehat{G}$.

Define $X_{G,\Lambda}$ to be the space
\begin{equation}
    X_{G,\Lambda} = (G/\Lambda) \times (\widehat{G}/\Lambda^{\perp}) . \label{eq:spacex}
\end{equation}
Since $\Lambda$ is a lattice in $G$, this is a compact space. The linear space $\sec{G}{\Lambda}$ becomes a left $C(X_{G,\Lambda})$-module with the pointwise action
\begin{equation}
(f \cdot F)(x,\omega) = f([x],[\omega]) F(x,\omega)
\end{equation}
for $(x,\omega) \in G \times \widehat{G}$, $f \in C(X_{G,\Lambda})$ and $F \in \sec{G}{\Lambda}$. Moreover, we define a $C(X_{G,\Lambda})$-valued inner product on $\sec{G}{\Lambda}$ by
\begin{equation}
\lhs{F}{G}([x],[\omega]) = F(x,\omega) \overline{G(x,\omega)}
\end{equation}
for $(x,\omega) \in G \times \widehat{G}$ and $F,G \in \sec{G}{\Lambda}$. The above structure is easily seen to satisfy axioms (i)-(iii) in the definition of a Hilbert $C(X_{G,\Lambda})$-module from \Cref{sec:hilbertmodules}. The norm induced by the $C(X_{G,\Lambda})$-valued inner product is just the supremum norm, so any Cauchy sequence $(F_n)_{n \in \N}$ of continuous, quasiperiodic functions will have a uniform limit $F \in C_b(G \times \widehat{G})$. Continuity then gives that the limit is also quasiperiodic, showing that $\sec{G}{\Lambda}$ is in fact a Hilbert $C^*$-module over $C(X_{G,\Lambda})$.

Our aim is to show that $\sec{G}{\Lambda}$ is finitely generated as a $C(X_{G,\Lambda})$-module, so that it corresponds to a Hermitian vector bundle over $X_{G,\Lambda}$ by \Cref{prop:serre_swan}. To do this, we begin by characterizing finite module frames of $\sec{G}{\Lambda}$:

\begin{proposition}\label{prop:frame_in_qp}
Let  $\{ G_1, \ldots, G_k \}$ be a subset of $\sec{G}{\Lambda}$. Then the following hold:
\begin{enumerate}
\item\label{it:frameqp1} $\{ G_1, \ldots, G_k \}$ is a module frame in $\sec{G}{\Lambda}$ if and only if for every $(x,\omega) \in G \times \widehat{G}$, there exists $1 \leq j \leq k$ such that $G_j(x,\omega) \neq 0$.
\item\label{it:frameqp3} $\{ G_1, \ldots, G_k \}$ is a normalized tight module frame in $\sec{G}{\Lambda}$ if and only if
\[ \sum_{j=1}^k |G_j(x,\omega)|^2 = 1 \]
for every $(x,\omega) \in G \times \widehat{G}$.
\end{enumerate}
In particular, a single generator $\{ G\}$ for $\sec{G}{\Lambda}$ is just a nonvanishing continuous quasiperiodic function on $G \times \widehat{G}$.
\end{proposition}

\begin{proof}
Note that the module frame condition from \eqref{eq:frame_def} on $\{ G_1, \ldots, G_k \}$ translates into the existence of $\lfb,\ufb > 0$ such that
\[ \lfb | F(x,\omega)|^2 \leq \sum_{j=1}^k F(x,\omega) \overline{ G_j(x,\omega)} \overline{F(x,\omega)} G_j(x,\omega) \leq \ufb | F(x,\omega)|^2 \]
for all $F \in \sec{G}{\Lambda}$ and all $(x,\omega) \in G \times \widehat{G}$. Hence, cancelling $|F(x,\omega)|^2$, we obtain the condition
\[ \lfb \leq \sum_{j=1}^k |G_j(x,\omega)|^2 \leq \ufb \]
for all $(x,\omega) \in G \times \widehat{G}$. Since each $|G_j|^2$ can be seen as a continuous function on the compact space $X_{G,\Lambda}$, it is clear that the upper frame condition automatically holds by the extreme value theorem. For the same reason, the lower frame condition is equivalent to the expression $\sum_{j=1}^k |G_j(x,\omega)|^2$ being nonvanishing, which is equivalent to (\ref{it:frameqp1}). Now (\ref{it:frameqp3}) follows directly by considering $\lfb = \ufb=1$ in this situation.
\end{proof}

The following proposition shows that $\sec{G}{\Lambda}$ is finitely generated. In fact, one can always find a module frame consisting of images of elements in $S_0(G)$ under the Zak transform. Recall that $T_x$ denotes a time shift as in \eqref{eq:time_frequency}.

\begin{proposition}\label{prop:qp_fingen}
Let $U$ be an open neighbourhood of $1$ in $G$ such that the collection $\{ \lambda U : \lambda \in \Lambda \}$ is pairwise disjoint, let $K \subseteq U$ be a nonempy compact set, and let $\xi$ a function in $S_0(G)$ satisfying $\xi|_K = 1$ and $\supp(\xi) \subseteq U$ (the existence of such a function is guaranteed by \Cref{lem:cutoff}). Then there exist $x_1, \ldots, x_k \in G$ such that
\[ \{ Z_{G,\Lambda}(T_{x_1}\xi), \ldots, Z_{G,\Lambda}(T_{x_k}\xi) \} \]
is a module frame for $\sec{G}{\Lambda}$. Consequently, $\sec{G}{\Lambda}$ is a finitely generated left Hilbert $C(X_{G,\Lambda})$-module.
\end{proposition}

\begin{proof}
Let $(x,\omega) \in G \times \widehat{G}$. Then $xU$ is an open neighbourhood of $x$, and the sets $\{ \lambda x U : \lambda \in \Lambda \}$ are pairwise disjoint. Moreover, if $y \in xK$ and $\lambda \in \Lambda$ satisfy $\xi(x^{-1}y\lambda) \neq 0$, then $x^{-1}y\lambda \in \supp(\xi) \subseteq U$. Thus $x^{-1}y \in U \cap (\lambda^{-1}U)$ which forces $\lambda = 1$. Consequently, we have that
\begin{equation}
Z(T_x\xi)(y,\omega) = \sum_{\lambda \in \Lambda} \xi(x^{-1}y\lambda) \omega(\lambda) = \xi(x^{-1}y) = 1. \label{eq:rel_zak}
\end{equation}
By \eqref{eq:quasi} and \eqref{eq:rel_zak}, we have that that
\[ Z(T_x \xi)(y\lambda', \omega) = \overline{\omega(\lambda')} Z(T_x \xi)(y,\omega) = \overline{\omega(\lambda')}  \]
whenever $y \in xK$ and $\lambda' \in \Lambda$. This shows that the function $Z(T_x \xi)$ is nonvanishing on $xK^{\circ}\Lambda \subseteq x K \Lambda$. Now $\{ xK^{\circ} : x \in G \}$ is an open cover of $G$, and $\Lambda$ is cocompact in $G$, so we can find a finite cover of $G$ of the form $\{ x_j K^{\circ} \Lambda : 1 \leq j \leq k \}$ for $x_1, \ldots, x_k \in G$. For each $1 \leq j \leq k$, $F_j \coloneqq Z_{G,\Lambda}(T_{x_j} \xi)$ is nonvanishing on $x_j K^{\circ} \Lambda$. By \Cref{prop:feichtinger_zak_cont}, each $F_j$ is in $\sec{G}{\Lambda}$. Thus, by \Cref{prop:frame_in_qp}, the set $\{ F_1, \ldots, F_k \}$ is a module frame for $\sec{G}{\Lambda}$.

Since a module frame is the same as a generating set by \Cref{prop:frame_char}, $\sec{G}{\Lambda}$ is finitely generated as a left $C(X_{G,\Lambda})$-module.
\end{proof}

\subsection{The line bundle \texorpdfstring{$E_{G,\Lambda}$}{EG,lambda}}\label{sec:bundle}

In this section, we construct the line bundle mentioned in \Cref{thm:intro2} and \Cref{thm:intro1} in the introduction.

We begin with the left Hilbert $C(X_{G,\Lambda})$-module $\sec{G}{\Lambda}$ defined in \Cref{def:quasi}, which was shown to be finitely generated in \Cref{prop:qp_fingen}. By \Cref{prop:serre_swan}, there exists a Hermitian vector bundle $E \to X_{G,\Lambda}$ such that $\Gamma(E) \cong \sec{G}{\Lambda}$. In fact, letting $\{ G_1, \ldots, G_k \}$ be a normalized tight frame for $\sec{G}{\Lambda}$, we have that $\sec{G}{\Lambda} \cong C(X_{G,\Lambda})^k P$, where
\[ P = \begin{pmatrix} \lhs{G_1}{G_1} & \cdots & \lhs{G_1}{G_k} \\ \vdots & \ddots & \cdots \\ \lhs{G_k}{G_1} & \cdots & \lhs{G_k}{G_k} \end{pmatrix} = \begin{pmatrix} G_1 \overline{G_2} & \cdots & G_1 \overline{G_k} \\ \vdots & \ddots & \cdots \\ G_k \overline{G_1} & \cdots & G_k \overline{G_k} \end{pmatrix} . \]
It then follows from \Cref{prop:serre_swan} that
\[ E \cong \{ ([x],[\omega],v) \in X_{G,\Lambda} \times \C^k : vP([x],[\omega]) = v \} .\]
The condition $vP([x],[\omega]) = v$ for $([x],[\omega]) \in X_{G,\Lambda}$ and $v = (v_j)_{j=1}^k \in \C^k$ translates into
\[ v_j = \sum_{i=1}^k G_i(x,\omega) \overline{G_j(x,\omega)} v_i = \overline{G_j(x,\omega)} \sum_{i=1}^k G_i(x,\omega) v_i \]
for each $1 \leq j \leq k$. Thus, $v_j / \overline{G_j(x,\omega)}$ is constant in $j$, so we can write $v$ as a complex multiple of the vector $(\overline{G_j(x,\omega)})_{j=1}^k$. It follows that we can describe $E$ as
\[ E \cong \{ ([x],[\omega], (z \overline{G_j(x,\omega)})_{j=1}^k) : x \in G, \omega \in \widehat{G}, z \in \C \} .\]
This shows that $E$ is in fact a line bundle. Now we want to obtain a description of $E$ independent of the chosen module frame. Note that the map $G \times \widehat{G} \times \C \to E_{G,\Lambda}$ given by
\[ (x,\omega,z) \mapsto ([x],[\omega],(z \overline{G_j(x,\omega)})_{j=1}^k) \]
is continuous and surjective, and that $(x,\omega,z)$ and $(x',\omega',z')$ has the same image under this map if and only if there exist $\lambda \in \Lambda$ and $\tau \in \Lambda^{\perp}$ such that
\begin{align*}
x' &= x\lambda, \\
\omega' &= \omega \tau, \\
z' &= \overline{\omega(\lambda)} z.
\end{align*}
Thus, we get a continuous bijection
\[ E_{G,\Lambda} \coloneqq (G \times \widehat{G} \times \C)/\sim \;\; \to E \]
where $\sim$ is the equivalence relation defined above. If we transfer the vector bundle structure from $E$ to $E_{G,\Lambda}$ via this map, then the projection $\pi \colon E_{G,\Lambda} \to X_{G,\Lambda}$ is given by $\pi([x,\omega,z]) = ([x],[\omega])$ for $[x,\omega,z] \in \tilde{E}_{G,\Lambda}$. The vector space structure on the fiber
\[ \pi^{-1}([x],[\omega]) = \{ [x,\omega,z] : z \in \C \} \]
is given by
\begin{align}
[x,\omega,z] + [x,\omega,w] &= [x,\omega,z+w], \label{eq:vb1} \\
\mu [x,\omega,z] &= [x,\omega,\mu z] \label{eq:vb2}
\end{align}
for $x \in G$, $\omega \in \widehat{G}$ and $z,w,\mu \in \C$. Furthermore, the inner product on the same fiber is given by
\begin{equation}
\langle [x,\omega,z], [x,\omega,w] \rangle_{([x],[\omega])} = z \overline{w} . \label{eq:vb3}
\end{equation}
Since the map $E_{G,\Lambda} \to E$ restricts to a linear isomorphism on each fiber, it is a vector bundle isomorphism by \cite[Lemma 2.3]{milnor}. We have thus obtained a description of $E$ independent on a chosen module frame. We summarize our results in the following proposition:

\begin{proposition}\label{prop:hilbert_sec}
Let $E_{G,\Lambda}$ denote the quotient of $G \times \widehat{G} \times \C$ by the equivalence relation given by $(x,\omega,z) \sim (x',\omega',z')$ if and only if there exist $\lambda \in \Lambda$ and $\tau \in \Lambda^{\perp}$ such that
\begin{align*}
x' &= x\lambda, \\
\omega' &= \omega \tau, \\
z' &= \overline{\omega(\lambda)} z.
\end{align*}
Then $E_{G,\Lambda}$ has the structure of a Hermitian line bundle with operations given in \eqref{eq:vb1}, \eqref{eq:vb2} and \eqref{eq:vb3}, and the map $\Psi \colon \sec{G}{\Lambda} \to \Gamma(E_{G,\Lambda})$ given by
\[ \Psi(F)([x],[\omega]) = [x,\omega,F(x,\omega)] \]
is an isomorphism of left Hilbert $C(X_{G,\Lambda})$-modules.
\end{proposition}

\begin{example}\label{ex:balanbundle}
We compare the bundle $E_{G,\Lambda}$ in the case of $G = \R$ and $\Lambda = \Z$ to the bundle $\xi$ in \cite{balan} in the case of $p=1$. Then $1 = \alpha \beta = p/q = 1$ so that $q=1$, and thus we can set $r_0 = 1$ and $n_0 = 0$ as well (see equation (21)). The matrix $E(t)$ used to define $\xi$ in Equations (32) and (33) in \cite{balan} becomes the $1 \times 1$ matrix $E(t) = e^{-2\pi i t}$, so the Equations (32) and (33) in \cite{balan} reduce to the relations that define $E_{\R,\Z}$, modulo a sign. This shows that $\xi$ is the dual bundle of $E_{\R,\Z}$. However, since the Picard group of $\T^2$ (the abelian group of line bundles under tensor product where inversion is given by taking dual bundles) is isomorphic to $\Z$, a line bundle over $\T^2$ is nontrivial if and only if its dual bundle is nontrivial. Hence the nontriviality of $\xi$ is equivalent to the nontriviality of $E_{\R,\Z}$, and the nontriviality of the former bundle was demonstrated in \cite{balan}.
\end{example}

\section{Connecting Heisenberg modules and vector bundles}\label{sec:connecting}

\subsection{The Zak transform as an isomorphism of Hilbert C*-modules}\label{sec:zak}

We now combine the Heisenberg modules from \Cref{sec:heismodules} and the setting of \Cref{problem:bl}. We consider the Heisenberg module $\heis{G}{\Delta}$ over the twisted group $C^*$-algebra $C^*(\Delta,c)$, where the lattice $\Delta \subseteq G \times \widehat{G}$ is of the form $\Delta =  \Lambda \times \Lambda^{\perp}$ for a lattice $\Lambda$ in $G$. The 2-cocycle $c$ from \eqref{eq:cocycle} restricted to $\Delta$ is given by
\[ c((\lambda,\tau),(\lambda',\tau')) = \overline{\tau'(\lambda)} = 1 \]
since $\lambda \in \Lambda$ and $\tau \in \Lambda^{\perp}$. Thus, the 2-cocycle is constantly equal to 1 on $\Lambda \times \Lambda^{\perp}$ and the twisted group $C^*$-algebra $C^*(\Lambda \times \Lambda^{\perp},c)$ becomes the un-twisted group $C^*$-algebra $C^*(\Lambda \times \Lambda^{\perp})$.

The group $\Lambda \times \Lambda^{\perp}$ is abelian, and its Pontryagin dual can be identified with $X_{G,\Lambda}$ from \Cref{sec:bundle} as follows:
\begin{align*}
\widehat{\Lambda \times \Lambda^{\perp}} &\cong (G \times \widehat{G})/((\Lambda \times \Lambda^{\perp})^{\perp} && \text{using the identity $\widehat{H} \cong \widehat{G}/H^{\perp}$ } \\
&\cong (\widehat{G}/\Lambda^{\perp}) \times \left( G / \Lambda \right) \\
&\cong (G/\Lambda) \times (\widehat{G}/\Lambda^{\perp}) \\
&= X_{G,\Lambda}. 
\end{align*}
Note that we flipped the two spaces in the last homeomorphism. Thus, we get $C^*(\Lambda \times \Lambda^{\perp}) \cong C(X_{G,\Lambda})$ via the Fourier transform, see \cite[Proposition 3.1]{Wi07}. We will use the Fourier transform $\phi \colon C^*(\Lambda \times \Lambda^{\perp}) \to C(X_{G,\Lambda})$ given by
\begin{equation}
    \phi(a)([x],[\omega]) = \sum_{\lambda \in \Lambda, \tau \in \Lambda^{\perp}} a(\lambda,\tau) \omega(\lambda) \tau(x) \label{eq:fourier_phi}
\end{equation}
for $a \in \ell^1(\Lambda \times \Lambda^{\perp})$, $x \in G$, $\omega \in \widehat{G}$, to identify the two $C^*$-algebras.

Now the Heisenberg module $\heis{G}{\Lambda \times \Lambda^{\perp}}$ is a finitely generated Hilbert $C^*$-module over $C^*(\Lambda \times \Lambda^{\perp}) \cong C(X_{G,\Lambda})$ by \Cref{prop:heis}. By the Serre--Swan theorem (\Cref{prop:serre_swan}), we know that $\heis{G}{\Lambda \times \Lambda^{\perp}}$ must be isomorphic to the continuous sections of a Hermitian vector bundle over $X_{G,\Lambda}$. We now prove that the bundle in question is $E_{G,\Lambda}$ from \Cref{sec:bundle}, and that the precise isomorphism is implemented by the Zak transform from \Cref{sec:zakt}:

\begin{theorem}\label{thm:module_iso}
Let $\Lambda$ be a lattice in a second countable, locally compact abelian group $G$. Consider the Heisenberg module $\heis{G}{\Lambda \times \Lambda^{\perp}}$ over $C^*(\Lambda \times \Lambda^{\perp})$, with $S_0(G)$ as a dense $\ell^1(\Lambda \times \Lambda^{\perp})$-submodule. Let $\phi$ denote the Fourier transform from \eqref{eq:fourier_phi}. For $a \in \ell^1(\Lambda \times \Lambda^{\perp})$ and $\xi,\eta \in S_0(G)$, we have that
\begin{align}
Z(a \cdot \xi) &= \phi(a) \cdot Z(\xi) \label{eq:zak_mod1}, \\
\phi( \lhs{\xi}{\eta}) &= \lhs{Z\xi}{Z\eta} \label{eq:zak_mod2} .
\end{align}
Using $\phi$ to identify $C^*(\Lambda \times \Lambda^{\perp})$ with $C(X_{G,\Lambda})$, the Zak transform given in \eqref{eq:zak_def} is an isomorphism of left Hilbert modules
\[  Z_{G,\Lambda}: \heis{G}{\Lambda \times \Lambda^{\perp}} \to \sec{G}{\Lambda}. \]
\end{theorem}

\begin{proof}
First, \Cref{prop:feichtinger_zak_cont} ensures that $Z$ maps $S_0(G)$ into $\sec{G}{\Lambda}$. To show \eqref{eq:zak_mod1}, we begin by letting $a = \delta_{(\lambda_0,\tau_0)} \in \ell^1(\Lambda \times \Lambda^{\perp})$ for $(\lambda_0,\tau_0) \in \Lambda \times \Lambda^{\perp}$ and $\xi \in S_0(G)$. Then
$\phi$ maps $\delta_{(\lambda_0,\tau_0)}$ to the function on $(G/\Lambda) \times (\widehat{G}/\Lambda^{\perp})$ given by $([x],[\omega]) \mapsto \omega(\lambda_0) \tau_0(x)$. Therefore, letting $x \in G$ and $\omega \in \widehat{G}$, we obtain
\begin{align*}
Z( \delta_{(\lambda_0,\tau_0)} \cdot \xi)(x,\omega) &= \sum_{\lambda \in \Lambda} \pi(\lambda_0,\tau_0) \xi(x\lambda) \omega(\lambda) && \text{by \eqref{eq:module_act}} \\
&= \sum_{\lambda \in \Lambda} \xi(x \lambda \lambda_0^{-1}) \tau_0(x\lambda) \omega(\lambda)  \\
&= \sum_{\lambda \in \Lambda} \xi(x \lambda) \tau_0(x \lambda \lambda_0) \omega(\lambda\lambda_0) && \text{via $\lambda \mapsto \lambda \lambda_0$ } \\
&= \omega(\lambda_0) \tau_0(x) \sum_{\lambda \in \Lambda} \xi(x \lambda) \omega(\lambda) \\
&= (\phi(\delta_{(\lambda_0,\tau_0)}) \cdot (Z\xi)) (x,\omega).
\end{align*}
By linearity and continuity, this proves \eqref{eq:zak_mod1} for all $a \in \ell^1(\Lambda \times \Lambda^{\perp})$.

We move on to proving \eqref{eq:zak_mod2}. Let $\xi,\eta \in S_0(G)$. Denote by $\mathcal{F}$ the usual Fourier transform $S_0(G) \to S_0(\widehat{G})$. Since the Poisson summation formula holds for functions in $S_0(G)$ \cite[Theorem 5.7 (iii)]{notnew} and $S_0(G)$ is an algebra under pointwise multiplication, we have that the Poisson summation formula holds for the function $t \mapsto \xi(tx) \overline{\eta(\lambda^{-1}tx)}$, with $x \in G$, $\omega \in \widehat{G}$. We do the following calculation, where the Poisson summation formula is applied in the fifth equality:
{\allowdisplaybreaks
\begin{align*}
\phi(\lhs{\xi}{\eta})([x],[\omega]) &= \sum_{\lambda \in \Lambda, \tau \in \Lambda^{\perp}} \langle \xi, \pi(\lambda,\tau)\eta) \rangle \omega(\lambda) \tau(x) \\
&= \sum_{\lambda \in \Lambda, \tau \in \Lambda^{\perp}} \int_G \xi(t) \overline{ \eta(\lambda^{-1} t) \tau(t)} \omega(\lambda) \tau(x) \, \dee t \\
&= \sum_{\lambda \in \Lambda, \tau \in \Lambda^{\perp}} \int_G \xi(tx) \overline{ \eta(\lambda^{-1} tx) \tau(t)} \omega(\lambda) \, \dee t && \text{ via $t \mapsto tx$ } \\
&= \sum_{\lambda \in \Lambda} \sum_{\tau \in \Lambda^{\perp}} \mathcal{F}\big[t \mapsto \xi(tx) \overline{ \eta(\lambda^{-1} tx)}\big](\tau) \omega(\lambda) \\
&= \sum_{\lambda \in \Lambda} \sum_{\lambda' \in \Lambda} \xi(\lambda' x) \overline{\eta(\lambda^{-1} \lambda' x)} \omega(\lambda) && \text{by Poisson summation} \\
&= \sum_{\lambda,\lambda' \in \Lambda} \xi(x \lambda') \overline{ \eta(x \lambda)} \overline{\omega(\lambda)} \omega(\lambda') && \text{via $\lambda \mapsto \lambda \lambda'$ } \\
&= Z\xi(x,\omega) \overline{Z\eta(x,\omega)}.
\end{align*}}
This proves \eqref{eq:zak_mod2}.

We have now established that the Zak transform is linear and inner product preserving (hence an isometry) on $S_0(G)$ and preserves the action of $\ell^1(\Lambda \times \Lambda^{\perp})$ on $S_0(G)$.

Now take $\xi \in \heis{G}{\Lambda \times \Lambda^{\perp}}$, and let $(\xi_n)_n$ be a sequence in $S_0(G)$ such that $\| \xi - \xi_n \|_{\heis{G}{\Lambda \times \Lambda^{\perp}}} \to 0$. Since $Z$ is an isometry on $S_0(G)$, we have that
\[ \| Z \xi_m - Z \xi_n \|_{\infty} = \| Z(\xi_m - \xi_n) \|_{\infty} = \| \xi_m - \xi_n \|_{\heis{G}{\Lambda \times \Lambda^{\perp}}} \]
for $m,n \in \N$. Since $(\xi_n)_n$ converges in the $\heis{G}{\Lambda \times \Lambda^{\perp}}$-norm, we have that $(Z \xi_n)_n$ converges uniformly to a function $F \in \sec{G}{\Lambda}$.

By \cite[Remark 4]{kankut}, the Zak transform can be viewed as a unitary map from $L^2(G)$ to $L^2(B \times B')$, where $B$ and $B'$ are any fundamental domains for $\Lambda$ in $G$ and $\Lambda^{\perp}$ in $\widehat{G}$, respectively. Hence we have, using \eqref{eq:norm_decreasing}, that
\[ \| Z \xi - Z \xi_n \|_{L^2(B \times B')} = \| \xi - \xi_n \|_{L^2(G)} \leq \| \xi - \xi_n \|_{\heis{G}{\Lambda \times \Lambda^{\perp}}} \to 0, \]
so $Z \xi_n \to Z \xi$ in the $L^2$-norm on $B \times B'$. But we also have that
\[ \| Z \xi_n - F \|_{L^2(B \times B')} \leq \| Z \xi_n - F \|_{L^{\infty}(B \times B')} \to 0 \]
which means that $Z \xi = F$ on $B \times B'$. By quasiperiodicity, they must be equal on the whole of $G \times \widehat{G}$. We have thus shown that the Zak transform maps $\heis{G}{\Lambda \times \Lambda^{\perp}}$ into $\sec{G}{\Lambda}$, and by continuity it becomes an inner product preserving $C^*(\Delta, c)$-linear map $\heis{G}{\Lambda \times \Lambda^{\perp}} \to \sec{G}{\Lambda}$.

It remains to show that $Z$ is surjective. Since $Z$ is an isometry, it has closed range, so it suffices to show that $Z(S_0(G))$ is dense in $\sec{G}{\Lambda}$. We show this in the following lemma:
\end{proof}

\begin{lemma}\label{lem:feicht_dense}
The image of $S_0(G)$ under the Zak transform is dense in $\sec{G}{\Lambda}$.
\end{lemma}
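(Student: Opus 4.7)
The plan is to exhibit finitely many $\xi_1,\ldots,\xi_k \in S_0(G)$ whose Zak transforms $\{Z\xi_1,\ldots,Z\xi_k\}$ generate $\sec{G}{\Lambda}$ as a $C(X_{G,\Lambda})$-module, and then to argue that $\overline{Z(S_0(G))}$ is invariant under the $C(X_{G,\Lambda})$-action, so that having the generators inside the closure forces the closure to equal all of $\sec{G}{\Lambda}$. Here the closure is with respect to the module norm on $\sec{G}{\Lambda} \cong \Gamma(E_{G,\Lambda})$, which coincides with the sup norm on $X_{G,\Lambda}$.

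For the module invariance, equation \eqref{eq:zak_mod1} (just established earlier in the proof of \cref{thm:module_iso}) shows that $Z(S_0(G))$ is stable under multiplication by $\phi(a)$ for every $a \in \ell^1(\Lambda \times \Lambda^\perp)$. Since $\ell^1(\Lambda \times \Lambda^\perp)$ is dense in $C^*(\Lambda \times \Lambda^\perp)$ and $\phi$ is an isometric isomorphism onto $C(X_{G,\Lambda})$, the trigonometric polynomials $\phi(\ell^1(\Lambda \times \Lambda^\perp))$ are sup-norm dense in $C(X_{G,\Lambda})$, so $\overline{Z(S_0(G))}$ is a $C(X_{G,\Lambda})$-submodule of $\sec{G}{\Lambda}$.

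For the construction of the generators, fix $x_0 \in G$. By discreteness of $\Lambda$, there is an open neighborhood $U$ of $x_0$ with $x_0^{-1} U \cap \Lambda = \{e\}$. Using that $S_0(G)$ contains nonzero continuous functions with arbitrarily small support, pick $\xi_{x_0} \in S_0(G)$ with $\supp(\xi_{x_0}) \subseteq U$ and $\xi_{x_0}(x_0) \neq 0$. Then only the term $\lambda = e$ contributes to the defining series of the Zak transform at $x_0$, giving $Z\xi_{x_0}(x_0,\omega) = \xi_{x_0}(x_0) \neq 0$ for every $\omega \in \widehat{G}$. By \cref{prop:feichtinger_zak_cont}, $Z\xi_{x_0}$ is continuous on $G \times \widehat{G}$, and by quasiperiodicity $|Z\xi_{x_0}|$ descends to the compact space $X_{G,\Lambda}$; combining these one obtains a neighborhood $W_{x_0}$ of $x_0$ in $G$ such that $Z\xi_{x_0}$ is nonvanishing on $W_{x_0} \times \widehat{G}$. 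Compactness of $G/\Lambda$ lets us extract finitely many $x_1,\ldots,x_k$ whose neighborhoods cover $G/\Lambda$, so the family $\{Z\xi_{x_1},\ldots,Z\xi_{x_k}\}$ satisfies that for every $(x,\omega) \in G \times \widehat{G}$ at least one of its members is nonzero at $(x,\omega)$. By \cref{prop:frame_in_qp} \ref{it:frameqp1} this is a module frame, hence a generating set for $\sec{G}{\Lambda}$, and we are done.

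The main technical obstacle is guaranteeing a Zak transform that does not vanish at a prescribed point: this hinges on the interplay between discreteness of $\Lambda$, which causes the series defining $Z\xi$ to collapse to a single term when $\xi$ has sufficiently small support, and the availability of small-support functions in $S_0(G)$, which is a standard property of the Feichtinger algebra.
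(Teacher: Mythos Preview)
Your proof is correct and follows the same overall strategy as the paper: produce finitely many functions $\xi_1,\ldots,\xi_k \in S_0(G)$ whose Zak transforms form a module frame for $\sec{G}{\Lambda}$, then use \eqref{eq:zak_mod1} together with density of $\phi(\ell^1(\Lambda\times\Lambda^\perp))$ in $C(X_{G,\Lambda})$ to conclude that $\overline{Z(S_0(G))}$ is a closed submodule containing a generating set, hence all of $\sec{G}{\Lambda}$.

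The difference lies in how the frame is constructed. The paper works through the fundamental-domain apparatus of \Cref{sec:bundles_qp}: it picks a relatively compact fundamental domain $B$ with nonempty interior, a cutoff $\xi\in S_0(G)$ supported in $B^\circ$ (via \cref{lem:cutoff}), computes $Z\xi$ explicitly as $f\cdot J_B$, and then invokes \cref{cor:specific_frame_qp} to see that translates $T_{x_j}\xi$ give a frame. Your route is more direct: for each $x_0$ you pick $\xi_{x_0}\in S_0(G)$ with support so small that the Zak series at $x_0$ collapses to a single nonzero term, then use continuity and a tube-lemma argument (compactness of $\widehat{G}/\Lambda^\perp$) to get nonvanishing on a tube $W_{x_0}\times\widehat{G}$, and finally extract a finite subcover by compactness of $G/\Lambda$. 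This sidesteps the $J_B$ machinery entirely and is arguably cleaner; the paper's approach, on the other hand, yields frame elements of the explicit form used elsewhere (e.g.\ in \Cref{sec:rp}). One small point worth making explicit in your write-up is the tube-lemma step that produces $W_{x_0}$ from nonvanishing on $\{x_0\}\times\widehat{G}$, and that the existence of small-support $S_0$-functions is exactly \cref{lem:cutoff} with $K=\{x_0\}$.
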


\begin{proof}
By \Cref{prop:fingen_module}, we can find a module frame $\{G_1, \ldots, G_k \}$ for $\sec{G}{\Lambda}$ where $G_j = Z \xi_j$ for functions $\xi_j \in S_0(G)$, $1 \leq j \leq k$.

Now suppose $F \in \sec{G}{\Lambda}$. Then since $\{ G_1, \ldots, G_k \}$ is a module frame, it is a generating set for $\sec{G}{\Lambda}$ by \Cref{prop:fingen_module}. Thus, there exist functions $f_1, \ldots, f_k \in C(X_{G,\Lambda})$ such that $F = \sum_j f_j \cdot G_j$. Now since $\ell^1(\Lambda \times \Lambda^{\perp})$ is dense in $C^*(\Lambda \times \Lambda^{\perp})$ and $\phi(C^*(\Lambda \times \Lambda^{\perp})) = C(X_{G,\Lambda})$, we can find sequences $(a_{i,j})_{i=1}^{\infty}$ in $\ell^1(\Lambda \times \Lambda^{\perp})$ for each $1 \leq j \leq k$ such that $\lim_i \phi(a_{i,j}) = f_j$ in the sup-norm. By \eqref{eq:zak_mod1}, we then have that
\[ F = \lim_i \sum_j \phi(a_{i,j}) \cdot G_j = \lim_i Z \left( \sum_j a_{i,j} \cdot \xi_j \right). \]
By \Cref{prop:heis} \eqref{eq:module_act}, we have that $\sum_j a_{i,j} \cdot \xi_j$ is in $S_0(G)$. this shows that $F$ is in the closure of $Z(S_0(G))$, which finishes the proof.
\end{proof}

As a consequence, we obtain the following description of the Heisenberg module $\heis{G}{\Lambda \times \Lambda^{\perp}}$ in terms of the Zak transform.

\begin{proposition}\label{prop:cont_zak_desc}
The Heisenberg module $\heis{G}{\Lambda \times \Lambda^{\perp}}$ consists exactly of the functions $\xi \in L^2(G)$ for which $Z_{G,\Lambda}\xi$ is continuous.
\end{proposition}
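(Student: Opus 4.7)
The proof plan is to exploit the isomorphism $Z_{G,\Lambda} \colon \heis{G}{\Lambda \times \Lambda^{\perp}} \to \sec{G}{\Lambda}$ established in \cref{thm:module_iso}, together with the fact, recalled from \cite[Remark 4]{kankut} during the proof of \cref{thm:module_iso}, that $Z_{G,\Lambda}$ extends to a unitary (in particular injective) map from $L^2(G)$ onto $L^2(B \times B')$ for any pair of fundamental domains $B \subseteq G$ and $B' \subseteq \widehat{G}$ for $\Lambda$ and $\Lambda^{\perp}$, respectively. Everything reduces to matching these two pictures of the Zak transform.

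For the forward inclusion, if $\xi \in \heis{G}{\Lambda \times \Lambda^{\perp}}$, then by \cref{thm:module_iso} we have $Z_{G,\Lambda}\xi \in \sec{G}{\Lambda}$, which by definition consists of \emph{continuous} quasiperiodic functions on $G \times \widehat{G}$. This gives the easy direction.

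For the reverse inclusion, suppose $\xi \in L^2(G)$ is such that $Z_{G,\Lambda}\xi$ is continuous. The computation preceding \cref{prop:feichtinger_zak_cont} shows that $Z_{G,\Lambda}\xi$ satisfies the quasiperiodicity relation \eqref{eq:quasi} almost everywhere, so by continuity it satisfies it everywhere, and hence $Z_{G,\Lambda}\xi \in \sec{G}{\Lambda}$. By the surjectivity part of \cref{thm:module_iso}, there exists $\eta \in \heis{G}{\Lambda \times \Lambda^{\perp}}$ with $Z_{G,\Lambda}\eta = Z_{G,\Lambda}\xi$. Now I would view $\eta$ as an element of $L^2(G)$ via the continuous embedding $\heis{G}{\Lambda \times \Lambda^{\perp}} \hookrightarrow L^2(G)$ from \cref{prop:ausens}\ref{it:ausens1}; one must check that the Zak transform of $\eta$ as an element of $\heis{G}{\Lambda \times \Lambda^{\perp}}$ agrees with its Zak transform as an element of $L^2(G)$, which follows from the fact that $S_0(G)$ is dense in $\heis{G}{\Lambda \times \Lambda^{\perp}}$ (in the Heisenberg module norm, hence in the $L^2$-norm by \eqref{eq:norm_decreasing}) and the two definitions coincide on $S_0(G)$.

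Having identified $\eta$ and $\xi$ as two elements of $L^2(G)$ with the same Zak transform, injectivity of $Z_{G,\Lambda}\colon L^2(G) \to L^2(B \times B')$ forces $\xi = \eta$ in $L^2(G)$, and therefore $\xi \in \heis{G}{\Lambda \times \Lambda^{\perp}}$. The only subtle point, and the step I would write out most carefully, is the compatibility of the two definitions of the Zak transform (on $\heis{G}{\Lambda \times \Lambda^{\perp}}$ versus on $L^2(G)$), which rests on approximation by elements of $S_0(G)$ together with the fact that convergence in the Heisenberg module norm implies convergence in the $L^2$-norm.
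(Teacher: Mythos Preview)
Your proposal is correct and follows essentially the same route as the paper. The only difference is that you spell out the compatibility between the Zak transform on $\heis{G}{\Lambda \times \Lambda^{\perp}}$ and on $L^2(G)$, whereas the paper treats this as already known: it was established inside the proof of \cref{thm:module_iso}, where for $\xi \in \heis{G}{\Lambda \times \Lambda^{\perp}}$ the uniform limit $F$ of $Z\xi_n$ (with $\xi_n \in S_0(G)$) is shown to coincide with $Z\xi$ computed in $L^2(B \times B')$. With that in hand, the paper's proof reduces to the single observation that $Z$ is a bijection from $\heis{G}{\Lambda \times \Lambda^{\perp}}$ onto $\sec{G}{\Lambda}$, and $Z\xi$ is continuous if and only if $Z\xi \in \sec{G}{\Lambda}$.
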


\begin{proof}
Let $\xi \in L^2(G)$. Then $Z \xi$ is continuous if and only if $Z \xi \in \sec{G}{\Lambda}$. Since the Zak transform maps $\heis{G}{\Lambda \times \Lambda^{\perp}}$ bijectively onto $\sec{G}{\Lambda}$ by \Cref{thm:module_iso}, it follows that $\xi \in \mathcal{E}$ if and only if $Z \xi$ is continuous.
\end{proof}

We are now in position to prove one of our main results:

\begin{theorem}\label{thm:bundle_bal}
Let $G$ be a second countable, locally compact abelian group, and let $\Lambda$ be a lattice in $G$. Then the following are equivalent:
\begin{enumerate}
\item\label{it:nontriv} The vector bundle $E_{G,\Lambda}$ is nontrivial.
\item\label{it:nonsing} The Heisenberg module $\heis{G}{\Lambda \times \Lambda^{\perp}}$ is not singly generated.
\item\label{it:quasi_zero} Every continuous map $F \colon G \times \widehat{G} \to \C$ satisfying
\begin{align*}
F(x\lambda,\omega\tau) &= \overline{\omega(\lambda)} F(x,\omega)
\end{align*}
for all $(x, \omega) \in G \times \widehat{G}$ and $(\lambda, \tau) \in \Lambda \times \Lambda^{\perp}$, must have a zero.
\item\label{it:zak_cont_zero} Whenever $\xi \in L^2(G)$ is such that $Z_{G,\Lambda} \xi$ is continuous, then $Z_{G,\Lambda} \xi$ has a zero.
\item\label{it:zak_cont_gabor} Whenever $\xi \in L^2(G)$ is such that $Z_{G,\Lambda} \xi$ is continuous, then $\mathcal{G}(\xi,\Lambda \times \Lambda^{\perp})$ is not a Gabor frame for $L^2(G)$.
\item\label{it:feicht_gabor} \Cref{problem:bl} holds for $(G,\Lambda)$. That is, whenever $\xi \in S_0(G)$, then $\mathcal{G}(\xi,\Lambda \times \Lambda^{\perp})$ is not a Gabor frame for $L^2(G)$.
\end{enumerate}
\end{theorem}

\begin{proof}
By \Cref{prop:hilbert_sec} and \Cref{thm:module_iso}, the three modules $\heis{G}{\Lambda \times \Lambda^{\perp}}$, $\sec{G}{\Lambda}$ and $\Gamma(E_{G,\Lambda})$ are all isomorphic, so if one of them is singly generated, then all of them are.

By \Cref{cor:singlegen_bundle}, (\ref{it:nontriv}) is equivalent to the statement that $\Gamma(E_{G,\Lambda})$ is not singly generated, which is again equivalent to (\ref{it:nonsing}).

By \Cref{prop:frame_in_qp}, (\ref{it:quasi_zero}) is equivalent to the statement that $\sec{G}{\Lambda}$ is not singly generated. By \Cref{prop:ausens} (\ref{it:ausens3}) and \Cref{prop:cont_zak_desc}, (\ref{it:zak_cont_gabor}) is equivalent to saying that there are no single generators for $\heis{G}{\Lambda \times \Lambda^{\perp}}$. Since every $F \in \sec{G}{\Lambda}$ is of the form $Z\xi$ for some $\xi \in \heis{G}{\Lambda \times \Lambda^{\perp}}$, (\ref{it:quasi_zero}) is equivalent to (\ref{it:zak_cont_zero}).

Finally, (\ref{it:zak_cont_gabor}) and (\ref{it:feicht_gabor}) are equivalent to (\ref{it:nonsing}) by \Cref{prop:balian_reform}, again using the description of $\heis{G}{\Lambda \times \Lambda^{\perp}}$ in \Cref{prop:cont_zak_desc}.
\end{proof}

It is already known from \cite[Remark 6]{kankut} that (\ref{it:quasi_zero}), (\ref{it:zak_cont_zero}) and (\ref{it:zak_cont_gabor}) in \Cref{thm:bundle_bal} are equivalent. The proofs we present here however, are new.

\subsection{Determining (non)triviality of \texorpdfstring{$E_{G,\Lambda}$}{EG,lambda} for some examples}\label{sec:calc}

Now that we have proved our main result, \Cref{thm:bundle_bal}, we turn to determine the triviality or nontriviality of the bundle $E_{G,\Lambda}$ for specific $(G,\Lambda)$. By \Cref{thm:bundle_bal}, this will then give the validity or nonvalidity of the Balian--Low theorem for $S_0(G)$ over the lattice $\Lambda \times \Lambda^{\perp}$ (\Cref{problem:bl}) in these settings.

We begin with the following result, which describes a kind of functoriality for the construction $(G,\Lambda) \mapsto E_{G,\Lambda}$.

\begin{proposition}\label{prop:iso_lattices}
Let $G$ and $H$ be second countable, locally compact abelian groups. Let $\Lambda$ be a lattice in $G$ and let $\Gamma$ be a lattice in $H$. Suppose $\phi \colon G \to H$ is a topological group isomorphism such that $\phi(\Lambda) = \Gamma$. Then the spaces $X_{G,\Lambda}$ and $X_{H,\Gamma}$ are homeomorphic, and the vector bundles $E_{G,\Lambda}$ and $E_{H,\Gamma}$ are isomorphic.
\end{proposition}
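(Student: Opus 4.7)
The plan is to construct both the base homeomorphism and the bundle isomorphism explicitly from $\phi$ and its Pontryagin dual, then verify that the definitions respect the equivalence relations used to define $E_{G,\Lambda}$ and $E_{H,\Gamma}$.

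First I would handle the base spaces. Since $\phi$ is a topological group isomorphism with $\phi(\Lambda) = \Gamma$, it descends to a homeomorphism $\overline{\phi} \colon G/\Lambda \to H/\Gamma$. The Pontryagin dual $\widehat{\phi} \colon \widehat{H} \to \widehat{G}$ given by $\widehat{\phi}(\eta) = \eta \circ \phi$ is also a topological group isomorphism; let $\psi = \widehat{\phi}^{-1} \colon \widehat{G} \to \widehat{H}$, which satisfies the key relation $\psi(\omega)(\phi(x)) = \omega(x)$ for all $x \in G$ and $\omega \in \widehat{G}$. One checks that $\psi(\Lambda^{\perp}) = \Gamma^{\perp}$: if $\tau \in \Lambda^{\perp}$, then for any $\gamma = \phi(\lambda) \in \Gamma$ we have $\psi(\tau)(\gamma) = \tau(\lambda) = 1$, and the reverse inclusion follows by applying the same argument to $\phi^{-1}$. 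Hence $\psi$ descends to a homeomorphism $\overline{\psi} \colon \widehat{G}/\Lambda^{\perp} \to \widehat{H}/\Gamma^{\perp}$, and the product $f = \overline{\phi} \times \overline{\psi} \colon X_{G,\Lambda} \to X_{H,\Gamma}$ is the required homeomorphism.

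Next I would define $\widetilde{\Phi} \colon G \times \widehat{G} \times \C \to H \times \widehat{H} \times \C$ by $\widetilde{\Phi}(x,\omega,z) = (\phi(x), \psi(\omega), z)$ and show it descends to a well-defined continuous map $\Phi \colon E_{G,\Lambda} \to E_{H,\Gamma}$. The only thing to check is that $\widetilde{\Phi}$ respects the equivalence relation \eqref{eq:bundle_sim}. If $(x,\omega,z) \sim (x\lambda, \omega\tau, \overline{\omega(\lambda)}z)$ with $\lambda \in \Lambda$ and $\tau \in \Lambda^{\perp}$, then
\[ \widetilde{\Phi}(x\lambda, \omega\tau, \overline{\omega(\lambda)}z) = (\phi(x)\phi(\lambda), \psi(\omega)\psi(\tau), \overline{\omega(\lambda)}z), \]
and since $\phi(\lambda) \in \Gamma$, $\psi(\tau) \in \Gamma^{\perp}$, and $\overline{\omega(\lambda)} = \overline{\psi(\omega)(\phi(\lambda))}$, this is equivalent to $\widetilde{\Phi}(x,\omega,z) = (\phi(x),\psi(\omega),z)$ in the corresponding relation defining $E_{H,\Gamma}$.

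Finally I would verify that $\Phi$ is a bundle isomorphism covering $f$. Bijectivity follows by running the same construction with $\phi^{-1}$ in place of $\phi$ to obtain a two-sided inverse; continuity of $\Phi$ and $\Phi^{-1}$ is inherited from $\widetilde{\Phi}$ and the universal property of the quotient topology. The relation $\pi_H \circ \Phi = f \circ \pi_G$ is immediate from the formulas, and on each fibre $\Phi$ is the identity on the $\C$-coordinate, hence a linear isomorphism. This is really a bookkeeping argument and I do not expect any genuine obstacle; the only mild care needed is to handle the contravariance of the Pontryagin dual correctly so that $\psi$ (and not $\widehat{\phi}$) appears in the formula for $\Phi$.
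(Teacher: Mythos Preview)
Your proposal is correct and follows essentially the same approach as the paper: the paper defines the base homeomorphism by $([x],[\omega]) \mapsto ([\phi(x)],[\omega \circ \phi^{-1}])$ and the bundle map by $[x,\omega,z] \mapsto [\phi(x),\omega\circ\phi^{-1},z]$, which is exactly your $(\overline{\phi}\times\overline{\psi},\Phi)$ once one unwinds $\psi = \widehat{\phi}^{-1}$. Your write-up is in fact somewhat more thorough than the paper's (you explicitly verify $\psi(\Lambda^\perp)=\Gamma^\perp$ and construct the inverse), but the underlying argument is the same.
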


\begin{proof}
Define $h \colon X_{G,\Lambda} \to X_{H,\Gamma}$ by $h([x],[\omega]) = ([\phi(x)],[\omega \circ \phi^{-1}])$. Then it is straightforward to show that $h$ is a continuous bijection, hence a homeomorphism because the spaces are compact Hausdorff. Furthermore, define $\psi \colon E_{G,\Lambda} \to E_{H,\Gamma}$ by $\psi([x,\omega,z]) = [\phi(x), \omega \circ \phi^{-1}, z]$. This map is well-defined, as for $x \in G$, $\omega \in \widehat{G}$, $\lambda \in \Lambda$, $\tau \in \Lambda^{\perp}$ and $z \in \C$, we have that
\begin{align*}
\psi([x\lambda,\omega\tau,\overline{\omega(\lambda)}z] &= [\phi(x\lambda), (\omega\tau) \circ \phi^{-1}, \overline{\omega(\lambda)}z ] \\
&= [ \phi(x)\phi(\lambda), (\omega \circ \phi^{-1})(\tau \circ \phi^{-1}), \overline{\omega(\lambda)} z] \\
&= [ \phi(x), \omega \circ \phi^{-1}, \omega \circ \phi^{-1}(\phi(\lambda)) \overline{\omega(\lambda)}z ] \\
&= [\phi(x), \omega \circ \phi^{-1}, z ] \\
&= \psi([x,\omega,z]) .
\end{align*}
Moreover, it is continuous and restricts to an isomorphism on each fiber, so it is an isomorphism of vector bundles.
\end{proof}

In the following example we look at the vector bundles associated to the groups $G = \R^n$ for $n \in \N$.

\begin{example}\label{ex:realbundle}
Let $G = \R^n$. Any lattice in $\R^n$ is of the form $A\Z^n$ for an invertible real $n \times n$ matrix $A$. Since multiplication by $A$ is a topological isomorphism on $\R^n$ that maps $\Z^n$ to $A \Z^n$, we have by \Cref{prop:iso_lattices} that all the vector bundles $E_{\R^n,\Lambda}$ are isomorphic regardless of the chosen lattice $\Lambda$. We will therefore concentrate on the simplest choice $\Lambda = \Z^n$.

The base space of the vector bundle $E = E_{\R^n,\Z^n}$ becomes $(\R^n / \Z^n) \times (\R^n / \Z^n) \cong \T^{2n}$. The fact that $E$ is a nontrivial vector bundle in this case is a consequence of the usual amalgam Balian--Low theorem in higher dimensions. The proof found in e.g.\ \cite[p.\ 164 Lemma 8.4.2]{groechenig} assumes that there is a non-vanishing continuous quasiperiodic function $F \colon \R^n \times \R^n \to \C$ and arrives at a contradiction. We will outline an alternate proof using Chern--Weil theory for vector bundles, see \cite{milnor}.

The base space $\T^{2n}$ has the structure of a smooth manifold, and a frame for the vector fields on $\T^{2n}$ is given by $\{ \partial_{x_j}, \partial_{\omega_j} \}_{j=1}^{n}$ where we view a coordinate in $\T^{2n}$ as $(x_1, \ldots, x_n, \omega_1, \ldots, \omega_n)$.

The vector bundle $E$ can be shown to be a smooth vector bundle over $\T^{2n}$. The smooth sections of $E$ can be identified with \emph{smooth} quasiperiodic functions $F \colon \R^n \times \R^n \to \C$.

A calculation shows that defining
\begin{align*}
\nabla_{\partial_{x_j}} F(x,\omega) &= \frac{\partial F}{\partial x_j}(x,\omega), \\
\nabla_{\partial_{\omega_j}} F(x,\omega) &= 2\pi i x_j F(x,\omega) + \frac{ \partial F}{\partial \omega_j}(x,\omega)
\end{align*}
for $(x,\omega) \in G \times \widehat{G}$, and extending linearly, defines a connection $\nabla$ on $E$. We have that
\begin{align*}
(\nabla_{\partial_{x_j}} \nabla_{\partial_{\omega_k}} - \nabla_{\partial_{\omega_k}} \nabla_{\partial_{x_j}}) F(x,\omega) &= \begin{cases}
       2\pi i F(x,\omega) &\quad \text{for $j=k$} \\
       0 &\quad \text{for $j \neq k$} \\
     \end{cases}
\end{align*}
for $1 \leq j,k \leq n$ and $(x,\omega) \in G \times \widehat{G}$. Hence the curvature $F^{\nabla}$ associated to $\nabla$ is given by
\[ F^{\nabla} = 2\pi i \sum_{j=1}^n \dee x_j \wedge \dee \omega_j .\]
It follows that the first Chern class of $E$ is given by
\[ c_1(E) = \frac{i}{2\pi} F^{\nabla} = - \sum_{j=1}^n \dee x_j \wedge \dee \omega_j .\]
This is a nontrivial element of the second cohomology group $H^2(\T^{2n},\Z) \cong \Z^{n(2n-1)}$ which is generated by all wedges of any two 1-forms $\dee x_j, \dee \omega_k$, $j,k=1,\ldots, n$. Since the line bundle has nontrivial first Chern class, it follows that the bundle is nontrivial.
\end{example}

\begin{proposition}\label{prop:compact_or_discrete}
Let $G$ be a second countable abelian group which is either compact or discrete, and let $\Lambda$ be any lattice in $G$. Then the vector bundle $E_{G,\Lambda}$ is trivial.
\end{proposition}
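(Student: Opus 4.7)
The plan is to use the equivalence of (i) and (iii) in \cref{thm:bundle_bal}: to trivialize $E_{G,\Lambda}$, it suffices to exhibit a nonvanishing continuous quasiperiodic function $F \colon G \times \widehat{G} \to \C$. The key structural input I would exploit is that in each case, one of the quotients $G/\Lambda$ or $\widehat{G}/\Lambda^{\perp}$ is automatically finite. Indeed, if $G$ is discrete then $G/\Lambda$ is both discrete and compact, hence finite; dually, if $G$ is compact then $\widehat{G}$ is discrete and the same reasoning applied to the annihilator shows $\widehat{G}/\Lambda^{\perp}$ is finite. In either case, this finiteness produces a continuous section of the relevant quotient map, which I would then plug into an explicit formula for $F$ of modulus one.

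In the discrete case, I would fix any section $s \colon G/\Lambda \to G$ (automatically continuous since the source is finite and discrete), write each $x \in G$ uniquely as $x = s([x])\lambda(x)$ with $\lambda(x) \in \Lambda$, and define
\[ F(x,\omega) = \overline{\omega(\lambda(x))}. \]
Verifying the quasiperiodicity relation reduces to the identity $\lambda(x\mu) = \lambda(x)\mu$ for $\mu \in \Lambda$ together with $\tau(\mu) = 1$ for $\tau \in \Lambda^{\perp}$, $\mu \in \Lambda$; continuity follows at once from $\lambda(\cdot)$ being locally constant on the discrete space $G$; and nonvanishing is free since $|F| \equiv 1$.

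In the compact case, I would dually fix any section $s \colon \widehat{G}/\Lambda^{\perp} \to \widehat{G}$ and set
\[ F(x,\omega) = \overline{s([\omega])(x)}. \]
Quasiperiodicity boils down to the observations that $s([\omega\tau]) = s([\omega])$ for $\tau \in \Lambda^{\perp}$, and $\omega(\lambda) = s([\omega])(\lambda)$ for $\lambda \in \Lambda$ (since the discrepancy lies in $\Lambda^{\perp}$). Continuity of $F$ comes from the fact that $\omega \mapsto s([\omega])$ is locally constant on the discrete group $\widehat{G}$, so that $F$ is, in a neighborhood of each $\omega$, just evaluation of a fixed character; and once again $|F| \equiv 1$.

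The only delicate point in either construction is the continuity of $F$, and this is exactly what the finiteness of one of the two quotients buys us: the section ceases to be globally continuous as soon as both quotients are infinite (as already in the classical covering $\R \to \R/\Z$), which is precisely why this elementary argument does not extend beyond the compact or discrete setting. Once $F$ is in hand, \cref{thm:bundle_bal} immediately yields the triviality of $E_{G,\Lambda}$.
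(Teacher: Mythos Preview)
Your proof is correct and rests on the same observation as the paper's: in each case one of the two quotients $G/\Lambda$, $\widehat{G}/\Lambda^{\perp}$ is finite, which provides a globally continuous section of the corresponding quotient map. The only difference is packaging. The paper works directly with the bundle: in the compact case it picks coset representatives $\omega_1,\ldots,\omega_r$ for $\Lambda^{\perp}$ in $\widehat{G}$ and writes down the trivialization $[x,\omega_k,z] \mapsto ([x],\omega_k(x)z)$ on each clopen component $(G/\Lambda)\times\{\omega_k\}$ of $X_{G,\Lambda}$. You instead invoke the equivalence $(i)\Leftrightarrow(iii)$ of \cref{thm:bundle_bal} and exhibit the nonvanishing quasiperiodic function $F(x,\omega)=\overline{s([\omega])(x)}$; under \cref{prop:hilbert_sec} this is precisely the nonvanishing section corresponding to the paper's trivialization (and your discrete-case $F$ is nothing but the function $J_B$ of \cref{sec:bundles_qp}, which is automatically continuous when $G$ is discrete). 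Your route is marginally less direct in that it appeals to \cref{thm:bundle_bal}, but it has the virtue of making visible the explicit nonvanishing element of $\sec{G}{\Lambda}$, which is what one actually cares about for the Balian--Low application.
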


\begin{proof}
Suppose that $G$ is compact. Then $\Lambda$ is a compact and discrete subgroup of $G$, so it must be finite, say of order $r$. Since finite abelian groups are self-dual, $\widehat{G}/\Lambda^{\perp} \cong \widehat{\Lambda}$ must be finite of order $r$ as well. Thus, the base space $X_{G,\Lambda}$ of the vector bundle $E=E_{G,\Lambda}$ is homeomorphic to a disjoint union of $r$ copies of $G/\Lambda$:
\[ X_{G,\Lambda} = (G/\Lambda) \times (\widehat{G}/\Lambda^{\perp}) = \coprod_{k=1}^r (G/\Lambda) \times \{ \omega_k \} .\]
Here, $\omega_1, \ldots, \omega_r$ are coset representatives for $\Lambda^{\perp}$ in $\widehat{G}$. Now, the vector bundle $E$ is trivial if the restrictions of $E$ to each component $(G/\Lambda) \times \{ \omega_k \}$ is trivial. The restrictions are of the form
\[ E_k \coloneqq \pi^{-1}((G/\Lambda) \times \{ \omega_k \}) = \{ [x, \omega_k, z] : x \in G, z \in \C \} .\]
Define a map $\phi \colon E_k \to (G/\Lambda) \times \C$ from $E_k$ to the trivial bundle over $G/\Lambda$ of rank $1$ by
\[ \phi([x,\omega_k,z]) = ([x], \omega_k(x) z) .\]
This map is well-defined, as it maps $[x\lambda, \omega_k \tau, \overline{\omega_k(\lambda)} z]$ to $([x\lambda], \omega_k(x\lambda) \overline{\omega_k(\lambda)} z) = ([x], \omega_k(x) z)$ for $(x,\omega) \in G \times \widehat{G}$, $(\lambda,\tau) \in \Lambda \times \Lambda^{\perp}$ and $z \in \C$. It is clear that it is continuous and restricts to a linear isomorphism on each fiber, hence it is an isomorphism of vector bundles. This proves the assertion that $E$ is trivial.

If $G$ is discrete, then $G/\Lambda$ is both compact and discrete, hence finite. The argument above applies, with the roles of $(G,\Lambda)$ and $(\widehat{G},\Lambda^{\perp})$ interchanged.
\end{proof}

At this point, we would like to point out that even though one does not have a Balian--Low theorem for the Feichtinger algebra in compact or discrete groups, this does not mean there there does not exist Balian-Low phenomena at all in these groups. In fact, it is shown in \cite{olsen_finite} that there exists a Balian--Low theorem in finite cyclic groups.

%
%
%
%
%

The following result follows directly from combining the main result of \cite{kankut} with \Cref{thm:bundle_bal}.

\begin{proposition}
Let $G$ be a second countable, locally compact abelian group that is compactly generated and has noncompact connected component of the identity. Then for any lattice $\Lambda$ in $G$, the vector bundle $E_{G,\Lambda}$ is nontrivial.
\end{proposition}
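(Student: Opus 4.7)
The plan is simply to combine two inputs, both already available to us. First, I would invoke the main theorem of Kaniuth and Kutyniok in \cite{kankut}, already cited in the introduction: for every second countable, locally compact abelian group $G$ that is compactly generated and has noncompact connected component of the identity, and every lattice $\Lambda$ in $G$, the generalized Balian--Low statement (\cref{problem:bl}) holds. Thus, for all $\eta \in S_0(G)$, the Gabor system $\mathcal{G}(\eta, \Lambda \times \Lambda^{\perp})$ is not a frame for $L^2(G)$.

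Second, I would feed this into \cref{thm:bundle_bal}. The hypothesis on $G$ ensures that item \ref{it:feicht_gabor} of that theorem holds for every lattice $\Lambda \subseteq G$. Since \ref{it:feicht_gabor} is equivalent to \ref{it:nontriv} (nontriviality of $E_{G,\Lambda}$), we conclude that $E_{G,\Lambda}$ is nontrivial for every such $\Lambda$. There is no genuine obstacle in this argument; the whole point of the proposition is to reinterpret the analytic result of \cite{kankut} as a topological statement about the line bundle $E_{G,\Lambda}$, and the translation is handed to us by \cref{thm:bundle_bal}. The only thing to verify is that the hypotheses on $G$ and $\Lambda$ in both theorems match, which they do verbatim.
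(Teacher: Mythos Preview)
Your proposal is correct and matches the paper's own proof essentially verbatim: the paper simply states that the result follows directly from combining the main result of \cite{kankut} with \cref{thm:bundle_bal}. There is nothing more to add.
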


\section{The group \texorpdfstring{$\R \times \Q_p$}{R x Qp}}\label{sec:rp}

In this section, we will show that when $G = \R \times \Q_p$ and $\Lambda$ is a certain lattice in $G$, then the associated bundle $E_{G,\Lambda}$ is nontrivial. By \Cref{thm:bundle_bal}, this will prove \Cref{problem:bl} for this setting, thereby giving us a Balian--Low theorem in this new setting.\\

Recall that the locally compact field $\Q_p$ of \emph{$p$-adic numbers} is the completion of $\Q$ with respect to the absolute value $| x |_p = p^{-k}$ where $x = p^k (a/b)$ for $a,b,k \in \Z$, $p \nmid a$, $p \nmid b$ and $|0|_p = 0$. Every $p$-adic number $x$ can be expressed uniquely as a converging series
\[ x = \sum_{k=K}^\infty x_k p^k \]
where $K$ is a (possibly negative) integer and $x_k \in \{ 0, \ldots, p-1 \}$ for every integer $k \geq K$. The ring of \emph{$p$-adic integers} $\Z_p$ is the compact subgroup of $p$-adic numbers $x$ for which $|x|_p \leq 1$, or, equivalently, the series of the form $\sum_{k=0}^\infty x_k p^k$.

The group $\Q_p$ is self-dual via the pairing $\Q_p \times \Q_p \to \T$, $(x,y) \mapsto e^{2\pi i \{ xy \}_p}$, where
\[ \left\{ \sum_{k=K}^\infty x_k p^k \right\}_p = \sum_{k=K}^{-1} x_k p^k .\]
We identify the group $\widehat{\R \times \Q_p}$ with itself via $((s,x),(t,y)) \mapsto e^{2\pi i st}e^{-2\pi i \{ xy \}_p}$ (note the sign). A remarkable fact is that
\[ \Z[1/p] = \{ a/p^k : a,k \in \Z \} \]
can be embedded diagonally into $\R \times \Q_p$ as a lattice. In other words, the set $\Z[1/p]^{\Delta} = \{ (q,q) : q \in \Z[1/p] \}$ is a lattice in $\R \times \Q_p$ (analogous to \cite[Theorem 5.11]{ramakrishnan}). Under the identification of the Pontryagin dual of $\R \times \Q_p$ above, the annihilator of $\Z[1/p]^{\Delta}$ is identified with $\Z[1/p]$. Thus we have that
\begin{equation} \widehat{\Z[1/p]} \cong \widehat{\Z[1/p]^{\Delta}} \cong \frac{\R \times \Q_p}{(\Z[1/p]^{\Delta})^{\perp}} \cong \frac{\R \times \Q_p}{\Z[1/p]^{\Delta}} . \label{eq:sol_iso}
\end{equation}

Now $\Z[1/p]$ can be realized as the direct limit of the sequence
\[ \Z \xrightarrow{\cdot p} \Z \xrightarrow{\cdot p} \Z \xrightarrow{\cdot p} \cdots  \]
where each map is multiplication by $p$. We index this sequence by the natural numbers including zero. The injection of the $k$-th copy of $\Z$ into $\Z[1/p]$ is given by $a \mapsto a/p^k$. Taking Pontraygin duals, we obtain that $\widehat{\Z[1/p]}$ is isomorphic to the inverse limit of the sequence
\[ \cdots \xrightarrow{\widehat{\cdot p}} \widehat{\Z} \xrightarrow{\widehat{\cdot p}} \widehat{\Z} \xrightarrow{\widehat{\cdot p}} \widehat{\Z} . \]
Using the usual identification $\widehat{\Z} \cong \R / \Z$, we obtain that $\widehat{\Z[1/p]}$ is isomorphic to the inverse limit of
\[ \cdots \xrightarrow{\cdot p} \R / \Z \xrightarrow{\cdot p} \R / \Z \xrightarrow{\cdot p} \R / \Z  \]
where each map is multiplication by $p$. This space is known as the \emph{$p$-solenoid} and is denoted by  $\sol{p}$. It is an example of a compact Hausdorff space that is connected, but not path-connected.

Denote by $\pi_k \colon \widehat{\Z[1/p]} \to \R / \Z$ the projection down to the $k$-th copy of $\R/\Z$. It is given by mapping a character $\omega \in \widehat{\Z[1/p]}$ to the class $[t] \in \R / \Z$ such that $\omega(a/p^k) = e^{2\pi i at}$ for all $a,k \in \Z$.

Now using \eqref{eq:sol_iso}, $\omega$ is obtained from a class $[s,x] \in (\R \times \Q_p)/\Z[1/p]^{\Delta}$, i.e.\ it is given by $\omega(q) = e^{2\pi i q s}e^{-2\pi i \{ q x \}_p}$ for $q \in \Z[1/p]$. Letting $\pi_k(\omega) = [t]$, we have that
\[ e^{2\pi a t} = \omega(a/p^k) = e^{2\pi i a s/p^k} e^{-2\pi i \{ a x / p^k \}_p} \]
for all $a,k \in \Z$, so that $[t] = [ p^{-k} s - \{ p^{-k} x \}_p ]$. We thus have the following proposition:

\begin{proposition}\label{prop:solenoid_limit}
The $p$-solenoid $\sol{p}$ given by the inverse limit of the sequence
\[ \cdots \xrightarrow{\cdot p} \R / \Z \xrightarrow{\cdot p} \R / \Z \xrightarrow{\cdot p} \R / \Z \]
is isomorphic to the quotient group $(\R \times \Q_p) / \Z[1/p]^{\Delta}$. The projection $\pi_k$ from $(\R \times \Q_p) / \Z[1/p]^{\Delta}$ down to the $k$-th factor of $\R / \Z$ is given by
\[ \pi_k([s,x]) = [p^{-k} s - \{ p^{-k} x \}_p ] \]
for $(s,x) \in R \times \Q_p$.
\end{proposition}

We now investigate the line bundle $\tilde{E} = E_{G,\Lambda}$ in the case of $G = \R \times \Q_p$ and $\Lambda = \Z[1/p]^{\Delta}$. The base space becomes
\[ X_{G,\Lambda} = \frac{ \R \times \Q_p}{\Z[1/p]^{\Delta}} \times \frac{ \widehat{\R \times \Q_p}}{(\Z[1/p]^{\Delta})^{\perp}} \cong \left( \frac{\R \times \Q_p}{\Z[1/p]^{\Delta}} \right)^2 \]
which is homeomorphic to $\sol{p}^2$ by \Cref{prop:solenoid_limit}. Thus, the vector bundle $\tilde{E}$ is a line bundle over $\sol{p}^2$, and by \Cref{prop:solenoid_limit}, $\sol{p}^2$ can be realized as the inverse limit of the sequence
\[ \cdots \rightarrow (\R / \Z)^2 \xrightarrow{(\cdot p, \cdot p)} (\R / \Z)^2 \xrightarrow{(\cdot p, \cdot p)} (\R / \Z)^2 . \]
Here $(\cdot p, \cdot p)([s],[t]) = ([ps],[pt])$ for $([s],[t]) \in (\R/\Z)^2$, and the projection $\pi_k^2 \colon \sol{p}^2 \to (\R/\Z)^2$ given by
\begin{equation}
    \pi_k^2([s,x],[t,y]) = ([p^{-k} s - \{ p^{-k} x \}_p], [p^{-k}t - \{ p^{-k} y \}_p ]) \label{eq:projection2}
\end{equation}
for $([s,x],[t,y]) \in \sol{p}^2$.

Key to understanding vector bundles over inverse limits of spaces is the following result. It can be seen by combining \cite[Example 5.2.4]{blackadark} with the correspondence between vector bundles and finitely generated projective modules, as well as the continuity of the functor $C(\cdot)$ from compact Hausdorff spaces to unital $C^*$-algebras:

\begin{proposition}\label{prop:inverse_limit}
Suppose $X$ is the inverse limit of the sequence of compact Hausdorff spaces
\[ \cdots \xrightarrow{f_3} X_3 \xrightarrow{f_2} X_2 \xrightarrow{f_1} X_1  .\]
Then the monoid $\Vect(X)$ of isomorphism classes of vector bundles over $X$ (under direct sum) is isomorphic to the direct limit of the monoids
\[ \Vect(X_1) \xrightarrow{f_1^*} \Vect(X_2) \xrightarrow{f_2^*} \Vect(X_3) \xrightarrow{f_3^*} \cdots , \]
where the induced maps are given by pullback of vector bundles.
\end{proposition}

The identity in $\Vect(X)$ is the class of the unique trivial bundle of rank $0$. Removing this class from $\Vect(X)$, i.e.\ considering only bundles of positive rank, we obtain a semigroup that we will denote by $\Vect^+(X)$. The functor $\Vect^+$ preserves inverse limits in the same sense that $\Vect$ does in \Cref{prop:inverse_limit}.

Denote by $\Z^+$ the set of positive integers. By \cite[Theorem 3.9]{cancellation} and \cite[Proposition 4.1]{projmultres1}, the semigroup $\Vect^+(\T^2)$ is isomorphic to $\Z^+ \times \Z$, where $(q,a)$ represents the vector bundle $E_{q,a}$ over $\T^2$ of rank $q$ and Chern class $-a \in H^2(\T^2,\Z) \cong \Z$. The vector bundle $E = E_{\R,\Z}$ corresponds to $(q,a)=(1,-1)$ by \Cref{ex:realbundle}. It is easily shown that $(\cdot p, \cdot p)^*(E_{q,a}) \cong E_{q,p^2a}$. Hence, the induced map of $(\cdot p, \cdot p) \colon (\R/\Z)^2 \to (\R/\Z)^2$ on the level of vector bundles is given by $(q,a) \mapsto (q,p^2a)$. This gives us that $\Vect^+(\sol{p}^2)$ is the direct limit of the sequence of monoids
\[ \Z^+ \times \Z \xrightarrow{(\cdot 1,\cdot p^2)} \Z^+ \times \Z \xrightarrow{(\cdot 1, \cdot p^2)} \Z^+ \times \Z \xrightarrow{(\cdot 1, \cdot p^2)} \cdots \]
In other words, $\Vect^+(\sol{p}^2) \cong\Z^+ \times \Z[1/p]$. It follows that we obtain the following classification of vector bundles over $\sol{p}^2$:

\begin{proposition}\label{prop:vectsol}
Let $E_{q,a}$ denote the vector bundle of rank $q$ and Chern class $-a$ over $\T^2$. Then every vector bundle over $\sol{p}^2$ of positive rank is of the form $(\pi_k^2)^*(E_{q,a})$ for some $k,q \in \Z^+$ and $a \in \Z$, where $\pi_k^2$ is the map in \eqref{eq:projection2}. Moreover, $(\pi_k^2)^*(E_{q,a}) \cong (\pi_{k'}^2)^*(E_{q',a'})$ if and only if $q=q'$ and $a/p^{2k} = a'/p^{2k'}$. Thus, $(\pi_k^2)^*(E_{q,a})$ is trivial if and only if $a = 0$.
\end{proposition}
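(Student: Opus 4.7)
The plan is to unpack the direct-limit description of $\Vect^+(\sol{p}^2)$ that has just been established. Combining \cref{prop:inverse_limit} applied to $\Vect^+$ with the identification $\Vect^+(\T^2) \cong \Z^+ \times \Z$ and the computation $(\cdot p,\cdot p)^*(E_{q,a}) \cong E_{q, p^2 a}$, the semigroup $\Vect^+(\sol{p}^2)$ is identified with the algebraic direct limit
\[ \varinjlim \Big( \Z^+ \times \Z \xrightarrow{(q,a) \mapsto (q, p^2 a)} \Z^+ \times \Z \xrightarrow{(q,a) \mapsto (q, p^2 a)} \cdots \Big). \]
I would first exhibit a concrete isomorphism of this direct limit with $\Z^+ \times \Z[1/p]$: the map at the $k$-th stage $(q,a) \mapsto (q, a/p^{2k})$ is compatible with the transition maps and, because every element of $\Z[1/p]$ has the form $a/p^{2k}$, induces a semigroup isomorphism in the colimit. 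Under this identification, the canonical image of $(q,a)$ from stage $k$ corresponds to $[(\pi_k^2)^*(E_{q,a})] \in \Vect^+(\sol{p}^2)$.

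Given this dictionary, the first assertion of the proposition, that every positive-rank bundle over $\sol{p}^2$ arises as some $(\pi_k^2)^*(E_{q,a})$, follows at once from the fact that every element of a sequential direct limit lies in the image of a canonical map from some finite stage. For the isomorphism criterion I would invoke the standard description of the equivalence relation defining the direct limit: $(q,a)$ at stage $k$ and $(q',a')$ at stage $k'$ yield the same element iff there exists $N \geq \max(k,k')$ with $(q, p^{2(N-k)} a) = (q', p^{2(N-k')} a')$ in $\Z^+ \times \Z$, which simplifies exactly to $q = q'$ and $a/p^{2k} = a'/p^{2k'}$ in $\Z[1/p]$.

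Finally, for the triviality assertion, I would note that the trivial bundle of rank $q$ over $\sol{p}^2$ is the pullback of the trivial bundle $E_{q,0}$ over $\T^2$ along any $\pi_k^2$, since pullback of a trivial bundle is trivial. Hence $(\pi_k^2)^*(E_{q,a})$ is trivial iff it is isomorphic to $(\pi_k^2)^*(E_{q,0})$, which by the criterion just proved is equivalent to $a/p^{2k} = 0$ in $\Z[1/p]$, i.e.\ $a = 0$. The only real obstacle is bookkeeping of conventions: making sure the canonical stage-$k$ map lines up with $(\pi_k^2)^*$ and that the computation $(\cdot p, \cdot p)^*(E_{q,a}) \cong E_{q, p^2 a}$ is used with the correct Chern class sign; everything else is routine manipulation of the direct-limit formalism.
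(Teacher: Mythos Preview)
Your proposal is correct and follows exactly the approach the paper takes: the proposition is stated in the paper as an immediate consequence of the preceding paragraph, which identifies $\Vect^+(\sol{p}^2)$ with the direct limit of $\Z^+ \times \Z$ under $(q,a) \mapsto (q,p^2 a)$, i.e.\ with $\Z^+ \times \Z[1/p]$. You have simply made explicit the direct-limit bookkeeping (surjectivity from the stages, the equivalence relation giving the isomorphism criterion, and the identification of the trivial bundle) that the paper leaves to the reader.
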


The next proposition, namely \Cref{prop:pullback_bundle}, determines the bundle $E_{\R \times \Q_p,\Z[1/p]^{\Delta}} \to \sol{p}^2$ among the bundles in \Cref{prop:vectsol}. Before the proof, we present the functoriality of the section functor.

To begin with, let $A$ and $B$ be unital $C^*$-algebras and let $\phi \colon A \to B$ be a $*$-homomorphism. If $\mathcal{E}$ is a left finitely generated projective $A$-module, there is a way to construct a left finitely generated projective $B$-module $\phi_*(\mathcal{E})$ known as \emph{extension of scalars} \cite[p.\ 60]{Va01}. In terms of projection matrices, extension of scalars can be described as follows: If $\mathcal{E}$ is represented by the projection $P=(p_{i,j})_{i,j=1}^k \in M_k(A)$, i.e.\ $\mathcal{E} \cong A^k P$, then we have that
\begin{equation}
    \phi_*(A^k P) \cong B^k \phi_*(P) . \label{eq:induced_proj}
\end{equation}
Here, $\phi_*(P)$ is the matrix $(\phi(p_{i,j}))_{i,j=1}^k \in M_k(B)$.

Now suppose we are in the following situation: We have a continuous map $f \colon X \to Y$ of compact Hausdorff spaces. We then get an induced $*$-homomorphism $C(f) \colon C(Y) \to C(X)$ given by precomposition by $f$. We can consider the extension of scalars along this $*$-homomorphism. Let $E \to Y$ be a vector bundle over $Y$. Then we can form the pullback $f^*(E)$ of $E$ along $f$, which is a vector bundle over $X$. Now $\Gamma(E)$ is a left finitely generated projective $C(Y)$-module, while $\Gamma(f^*(E))$ is a finitely generated projective $C(X)$-module. The section functor is functorial in the sense that if one performs extension of scalars on $\Gamma(E)$ using the induced map $C(f)$, then the resulting $C(Y)$-module is isomorphic to $\Gamma(f^*(E))$. That is, we have that
\begin{equation}
C(f)_*(\Gamma(E)) \cong \Gamma(f^*(E)) . \label{eq:extension_of_scalars}
\end{equation}
For a proof of $\eqref{eq:extension_of_scalars}$, see \cite[Proposition 2.12]{Va01}.

The proof of the following proposition will employ fundamental domains, which are introduced in \Cref{appendix:fundoms}.

\begin{proposition}\label{prop:pullback_bundle}
Let $E$ denote the line bundle $E_{\R,\Z}$ over $(\R/\Z)^2$ and let $\tilde{E}$ denote the line bundle $E_{\R \times \Q_p, \Z[1/p]^{\Delta}}$ over $\sol{p}^2$, both defined as in \Cref{sec:bundle}. Then $\tilde{E}$ is isomorphic to the pullback of $E$ along $\pi_0^2 \colon \sol{p}^2 \to (\R/\Z)^2$, where $\pi_0^2$ is given as in \eqref{eq:projection2}.
\end{proposition}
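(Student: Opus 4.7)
The plan is to prove the isomorphism by finding matching local trivializations of $\tilde{E}$ and $(\pi_0^2)^* E$ over a common open cover of $\sol{p}^2$ and showing that the two \v{C}ech transition cocycles coincide. Since both bundles have rank one, this equality of cocycles will force the bundles to be isomorphic.

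First I would set up compatible fundamental domains. Take $B = [0,1)$ for $\Z$ in $\R$ and $\tilde{B} = [0,1) \times \Z_p$ for $\Z[1/p]^{\Delta}$ in $\R \times \Q_p$. By \cref{cor:fund_int}, pick $x_1, \dots, x_k \in \R$ such that the translates $B_i = x_i + B$ satisfy $\bigcup_{i=1}^k (B_i^\circ + \Z) = \R$, and set $\tilde{B}_i = (x_i, 0) + \tilde{B}$. Using that $\Z_p^\circ = \Z_p$ and the identity $(x + \Z_p) \cap \Z[1/p] = \{x\}_p + \Z$ (which rests on $\Z[1/p] \cap \Z_p = \Z$), one checks that
\[
(s, x) \in \tilde{B}_i^\circ + \Z[1/p]^{\Delta} \iff s - \{x\}_p \in B_i^\circ + \Z .
\]
It follows that the open cover $\{U_{\tilde{B}_i}\}$ of $\sol{p}^2$ constructed in the proof of \cref{prop:is_bundle} is the pullback under $\pi_0^2$ of the cover $\{U_{B_i} \times (\R/\Z)\}$ of $(\R/\Z)^2$.

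Next I would compute and compare the transition cocycles. The trivialization $h([g,\omega,z]) = ([g],[\omega], \omega(\lfloor g \rfloor_{\tilde{B}_i}) z)$ of $\tilde{E}$ from \cref{prop:is_bundle} gives the transition on $U_{\tilde{B}_i} \cap U_{\tilde{B}_j}$ as multiplication by $\omega(\lfloor g \rfloor_{\tilde{B}_i} - \lfloor g \rfloor_{\tilde{B}_j})$. A direct computation of the generalized floor yields $\lfloor (s,x) \rfloor_{\tilde{B}_i} = (q_i, q_i)$ with $q_i = \{x\}_p + \lfloor s - \{x\}_p - x_i \rfloor$, hence $q_i - q_j = n_{ij}(s - \{x\}_p)$ where $n_{ij}(r) := \lfloor r - x_i \rfloor - \lfloor r - x_j \rfloor \in \Z$. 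Applying the character $\omega = (t,y)$ to $(n_{ij}, n_{ij}) \in \Z[1/p]^{\Delta}$ gives the phase $e^{2\pi i t n_{ij}} e^{-2\pi i \{y n_{ij}\}_p}$, and the key congruence
\[
\{y n\}_p \equiv n \{y\}_p \pmod{\Z} \quad \text{for } n \in \Z
\]
collapses it to $e^{2\pi i n_{ij}(s - \{x\}_p)(t - \{y\}_p)}$. For $E$ over $U_{B_i} \cap U_{B_j}$ the analogous computation gives transition $e^{2\pi i v n_{ij}(r)}$, whose pullback via $\pi_0^2$ yields exactly the same expression. Thus the two cocycles agree and $\tilde{E} \cong (\pi_0^2)^* E$.

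The main technical obstacle is verifying the congruence $\{y n\}_p \equiv n \{y\}_p \pmod{\Z}$ for $n \in \Z$ and $y \in \Q_p$. This holds because $n(y - \{y\}_p) \in n\Z_p \subseteq \Z_p$ (using $n \in \Z \subseteq \Z_p$), so $\{yn\}_p$ and $n\{y\}_p$ are two elements of $\Z[1/p]$ whose difference lies in $\Z_p$, hence in $\Z[1/p] \cap \Z_p = \Z$.
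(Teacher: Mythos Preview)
Your proof is correct and takes a genuinely different route from the paper. The paper works on the module side: it invokes Serre--Swan to reduce the question to an isomorphism $\sec{\tilde{G}}{\tilde{\Lambda}} \cong C(\pi_0^2)_*(\sec{\R}{\Z})$ of $C(\sol{p}^2)$-modules, constructs explicit normalized tight module frames $\{G_1,G_2\}$ and $\{\tilde{G}_1,\tilde{G}_2\}$ for each side (using a wavelet filter function as in \cref{ex:qpfilter} and \cref{cor:specific_frame_qp}), and then checks entry by entry that the two associated $2\times 2$ projection matrices in $M_2(C(\sol{p}^2))$ coincide. You instead stay on the bundle side and compare \v{C}ech transition cocycles directly over a common trivializing cover, bypassing both Serre--Swan and the frame machinery of \Cref{sec:bundles_qp}. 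Both arguments ultimately rest on the same arithmetic facts---the formula \eqref{eq:rqp_floor} for $\lfloor (s,x)\rfloor_{\tilde{B}}$, the identity $\Z[1/p]\cap\Z_p=\Z$, and the congruence $\{yn\}_p\equiv n\{y\}_p\pmod{\Z}$ for $n\in\Z$---and the paper's computation of $J_{\tilde{B}}(s,x,t,y)\overline{J_{\tilde{B}}(s+1/2,x,t,y)}$ is essentially your cocycle calculation in disguise. Your approach is shorter and more self-contained; the paper's projection-matrix argument, on the other hand, dovetails with the Hilbert $C^*$-module formalism developed earlier and connects more directly to the $K$-theoretic classification of \cref{prop:vectsol}.

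One minor notational slip: the sets $U_{B_i}$ from the proof of \cref{prop:is_bundle} are already subsets of $(\R/\Z)^2$, not of $\R/\Z$, so ``$U_{B_i}\times(\R/\Z)$'' should simply read ``$U_{B_i}$''.
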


\begin{proof}
Our proof goes as follows: Instead of showing directly that $\tilde{E} \cong (\pi_0^2)^*(E)$, we can instead show (by \Cref{prop:serre_swan}) that $\Gamma(\tilde{E}) \cong \Gamma((\pi_0^2)^*(E))$ as $C(\sol{p}^2)$-modules.

We consider the continuous map $\pi_0^2 \colon \sol{p}^2 \to (\R/\Z)^2$, which induces the map
\[ C(\pi_0^2) \colon C((\R/\Z)^2) \to C(\sol{p}^2) \]
given by precomposition by $\pi_0^2$.

We have that $\Gamma(\tilde{E}) \cong \sec{\tilde{G}}{\tilde{\Lambda}}$ by \Cref{prop:hilbert_sec}. We also have that
\begin{align*}
    \Gamma((\pi_0^2)^*(E)) & \cong C(\pi_0^2)_*(\Gamma(E)) && \text{by $\eqref{eq:extension_of_scalars}$} \\
    & \cong C(\pi_0^2)_*(\sec{\R}{\Z}) && \text{by \Cref{prop:hilbert_sec}} .
\end{align*}
The goal is thus to show that $\sec{\tilde{G}}{\tilde{\Lambda}}$ and $C(\pi_0^2)_*(\sec{\R}{\Z})$ are isomorphic as $C(\sol{p}^2)$-modules. To show this, it suffices to show that they can be represented by the same projection matrix over $C(\sol{p}^2)$, i.e.\ that $\sec{\tilde{G}}{\tilde{\Lambda}} \cong A^k P \cong C(\pi_0^2)_*(\sec{\R}{\Z})$ for some $k \in \N$ and $P \in M_k(C(\sol{p}^2))$. This will be our approach.

We begin by constructing normalized tight module frames for the modules $\sec{\tilde{G}}{\tilde{\Lambda}}$ and $C(\pi_0^2)_*(\sec{\R}{\Z})$, using fundamental domains from \Cref{appendix:fundoms}.

Set $\tilde{G} = \R \times \Q_p$ and $\tilde{\Lambda} = \Z[1/p]^{\Delta}$. A fundamental domain for $\tilde{\Lambda}$ in $\tilde{G}$ is given by $\tilde{B}= [0,1) \times \Z_p$. To see this, note that if $(s,x) \in \R \times \Q_p$, then
\begin{equation}
(s,x) = (\{ x \}_p + \lfloor s - \{ x \}_p \rfloor, \{ x \}_p + \lfloor s - \{ x \}_p \rfloor ) + ( \text{fr}( s - \{ x \}_p ), x - \{ x \}_p - \lfloor s - \{ x \}_p \rfloor ) \label{eq:rqp_fundom}
\end{equation}
where $\lfloor  \cdot \rfloor$ denotes the floor function $\R$ and $\text{fr}$ denotes the fractional part of a real number, i.e.\ $\text{fr}(s) = s - \lfloor s \rfloor$. The first pair in \eqref{eq:rqp_fundom} is in $\Z[1/p]^{\Delta}$ while the second pair is in $[0,1) \times \Z_p$. Thus, the generalized floor function $\lfloor  \cdot \rfloor_{\tilde{B}} \colon \R \times \Q_p \to \Z[1/p]$ from \eqref{eq:floor_prop} associated to the fundamental domain $\tilde{B}= [0,1) \times \Z_p$ is given by
\begin{equation}
\lfloor (s,x) \rfloor_{\tilde{B}} = \{ x \}_p + \lfloor s - \{ x \}_p \rfloor  \label{eq:rqp_floor}
\end{equation}
for $(s,x) \in \R \times \Q_p$. Now since $\Z_p$ is clopen in $\Q_p$ we have that
\begin{align}
\partial ( [0,1) \times \Z_p ) &= \partial([0,1)) \times \overline{\Z_p} \cup \overline{[0,1)} \times \partial(\Z_p) = \{ 0, 1 \} \times \Z_p.
\end{align}
By \Cref{prop:loc_const}, $\lfloor \cdot \rfloor_{\tilde{B}}$ is discontinuous at $(s,x)$ if and only if $(s,x) \in \partial( [0,1) \times \Z_p) = \{ 0,1 \} \times \Z_p$. From \eqref{eq:rqp_floor}, this happens if and only if $s - \{ x \}_p \in \Z$.\\

By \Cref{ex:qpfilter}, we can find a continuous, $\Lambda$-periodic function $f \colon \R \to \C$ with $f(0) = 0$ and
\[ |f(s)|^2 + |f(s+1/2)|^2 = 1 \]
for all $s \in \R$. Consequently, the set $\{ G_1, G_2 \}$ where
\begin{align*}
G_1(s,t) &= f(s) J_{[0,1)}(s,t) = f(s) e^{-2\pi i \lfloor s \rfloor t}, \\
G_2(s,t) &= f(s+1/2) J_{[0,1)}(s+1/2,t) = e^{-2\pi i \lfloor s+1/2 \rfloor t},
\end{align*}
is a normalized tight module frame for $\sec{\R}{\Z}$ as in \Cref{cor:specific_frame_qp}. By \Cref{prop:frame_proj}, the matrix $P \in M_2(C(\T^2))$ given by
\begin{align*}
P([s],[t]) &= \begin{pmatrix} |G_1(s,t)|^2 & G_1(s,t)\overline{G_2(s,t)} \\[1ex] G_2(s,t)\overline{G_1(s,t)} & |G_2(s,t)|^2 \end{pmatrix}
\end{align*}
represents $\sec{\R}{\Z}$, i.e.\ $\sec{\R}{\Z} \cong C(\T^2)^2 P$. Now it follows from \eqref{eq:induced_proj} that the module $(\pi_0^2)_*(\sec{\R}{\Z}) = (\pi_0^2)_*(C(\T^2)^2P)$ is represented by the projection matrix $Q = (\pi_0^2)_*(P) = P \circ \pi_0^2$, which is given by $Q = (Q_{i,j})_{i,j=1}^2$ where
\begin{align*}
Q_{1,1}([s,x],[t,y]) &= |G_1(s-\{x\}_p, t-\{y\}_p)|^2, \\
Q_{1,2}([s,x],[t,y]) &= G_1(s-\{x\}_p,t-\{y\}_p)\overline{G_2(s-\{x\}_p,t-\{y\}_p)}, \\
Q_{2,1}([s,x],[t,y]) &= G_2(s-\{x\}_p,t-\{y\}_p) \overline{G_1(s-\{x\}_p,t-\{y\}_p)}, \\
Q_{2,2}([s,x],[t,y]) &= |G_2(s-\{x\}_p,t-\{y\}_p)|^2 .
\end{align*}

Next, note that with $x_1 = (0,0)$ and $x_2 = (-1/2,0)$, the sets $x_1 + \tilde{B}^{\circ} + \tilde{\Lambda} = (0,1) \times \Z_p + \Z[1/p]^{\Delta}$ and $x_2 + \tilde{B}^{\circ} + \tilde{\Lambda} = (-1/2,1/2) \times \Z_p + \Z[1/p]^{\Delta}$ form an open cover of $\R \times \Q_p$ as in \Cref{cor:specific_frame_qp}. Now define $\tilde{f} \colon \R \times \Q_p \to \C$ by $\tilde{f}(s,x) = f(s - \{ x \}_p)$. Then $\tilde{f}$ is a continuous, $\Z[1/p]^{\Delta}$-periodic function and
\[ |\tilde{f}(s,x)|^2 + |\tilde{f}(s+1/2,x)|^2 = |f(s-\{ x \}_p)|^2 + |f(s-\{ x \}_p + 1/2)|^2 = 1 \]
for all $(s,x) \in \R \times \Q_p$. Moreover, if $(k,x) \in \partial([0,1) \times \Z_p) = \{ 0,1 \} \times \Z_p$ then
\[ \tilde{f}(k,x) = f(k - \{ x \}_p) = f(k) = 0, \]
since $f$ is zero on integers. This shows that $\tilde{f}$ satisfies the requirements in \Cref{cor:specific_frame_qp} with respect to the fundamental domain $\tilde{B}$ and $x_1, x_2$. Consequently, with
\begin{align*}
\tilde{G}_1(s,x,t,y) &= \tilde{f}(s,x) J_{\tilde{B}}(s,x,t,y) \\
\tilde{G}_2(s,x,t,y) &= \tilde{f}(s+1/2,x) J_{\tilde{B}}(s+1/2,x,t,y) ,
\end{align*}
we have that $\{ \tilde{G}_1, \tilde{G}_2 \}$ is a normalized tight frame for $\sec{\tilde{G}}{\tilde{\Lambda}}$. By \Cref{prop:frame_proj}, it follows that the matrix $\tilde{P} = (\tilde{P}_{i,j})_{i,j} \in M_2(C(\sol{p}^2))$ given by
\[ \tilde{P}([s,x],[t,y]) = \begin{pmatrix} |\tilde{G}_1(s,x,t,y)|^2 & \tilde{G}_1(s,x,t,y), \tilde{G}_2(s,x,t,y) \\[1ex] \tilde{G}_2(s,x,t,y) \tilde{G}_1(s,x,t,y) & |\tilde{G}_2(s,x,t,y)|^2 \end{pmatrix} \]
represents the module $\sec{\tilde{G}}{\tilde{\Lambda}}$, i.e.\ $\sec{\tilde{G}}{\tilde{\Lambda}} \cong C(\sol{p}^2)^2 \tilde{P}$. We now compute $\tilde{P}$ and show that it is equal to the matrix $Q$. First, note that
\[ |\tilde{G}_1(s,x,t,y)|^2 = |\tilde{f}(s,x)|^2 = |f(s-\{x\}_p)|^2 = |G_1(s-\{x\}_p, t-\{y\}_p)|^2 \]
and similarly, $|\tilde{G}_2(s,x,t,y)|^2 = |G_2(s-\{x\}_p, t-\{y\}_p)|^2$. Thus, $\tilde{P}_{1,1} = Q_{1,1}$ and $\tilde{P}_{2,2} = Q_{2,2}$. Next, note that using \eqref{eq:rqp_floor}, $J_{\tilde{B}}$ is given by
\[ J_{\tilde{B}}(s,x,t,y) = e^{-2\pi i ( \{ x \}_p + \lfloor s - \{ x \}_p \rfloor ) t} e^{2\pi i \{ ( \{ x \}_p + \lfloor s - \{ x\}_p \rfloor ) y \}_p} \]
for $(s,x),(t,y) \in \R \times \Q_p$. Thus, calculating $J_{\tilde{B}}(s,x,t,y) \overline{ J_{\tilde{B}}(s+1/2,x,t,y)}$, the exponentials involving $\{x\}_p$ cancel and we are left with
\begin{align*}
J_{\tilde{B}}(s,x,t,y) \overline{ J_{\tilde{B}}(s+1/2,x,t,y)} &= e^{-2\pi i (\lfloor s - \{ x\}_p \rfloor - \lfloor s + 1/2 - \{ x\}_p \rfloor ) t} \\
& \qquad \times e^{2\pi i \{ \lfloor s - \{ x \}_p \rfloor - \lfloor s + 1/2 - \{x \}_p \rfloor ) y \}_p} \\
&= e^{-2\pi i ( \lfloor s + \{ x \}_p \rfloor - \lfloor s + 1/2 - \{ x \}_p \rfloor ) (t - \{ y \}_p ) } .
\end{align*}
It follows that
\begin{align*}
\tilde{P}_{1,2}([s,x],[t,y]) &= \tilde{G}_1(s,x,t,y) \overline{\tilde{G}_2(s,x,t,y)} \\
&= \tilde{f}(s,x) \overline{ \tilde{f}(s+1/2,x) } J_{\tilde{B}}(s,x,t,y) \overline{ J_{\tilde{B}}(s+1/2,x,t,y)} \\
&= f(s-\{x\}_p) \overline{ f( s + 1/2 - \{x \}_p )} e^{-2\pi i ( \lfloor s + \{ x \}_p \rfloor - \lfloor s + 1/2 - \{ x \}_p \rfloor ) (t - \{ y \}_p ) } \\
&= f(\pi_0(s,x)) \overline{ f(\pi_0(s+1/2,x)} J_B(s-\{x\}_p, t-\{y\}_p) \\
& \qquad \times \overline{ J_B( s+1/2-\{x\}_p, t - \{ y\}_p)} \\
&= G_1(s-\{x\}_p, t-\{y\}_p) \overline{ G_2(s-\{x\}_p, t - \{y\}_p)} \\
&= Q_{1,2}([s,x],[t,y]).
\end{align*}
This implies that
\[ \tilde{P}_{2,1} = \overline{ \tilde{P}_{1,2}} = \overline {Q_{1,2}} = Q_{2,1} .\]
We have now shown that $\tilde{P}_{i,j} = Q_{i,j}$ for all $i,j=1,2$. Thus $Q = \tilde{P}$, which finishes the proof.
\end{proof}

With \Cref{prop:pullback_bundle} proved, our final main result is a straightforward corollary:

\begin{theorem}\label{thm:rqp}
Let $G = \R \times \Q_p$ and let $\Lambda = \Z[1/p]^{\Delta}$. Then the line bundle $\tilde{E} = E_{G,\Lambda}$ over $\sol{p}^2$ is nontrivial. Hence \Cref{problem:bl} holds for $(G,\Lambda)$, that is, if $\eta \in S_0(\R \times \Q_p)$, then the Gabor system
\[ \mathcal{G}(\eta, \Lambda \times \Lambda^{\perp}) = \{ (s,x) \mapsto e^{2\pi i rs} e^{-2\pi i \{ r x \}_p} \eta( s - q, x - q) : q,r \in \Z[1/p] \} \]
is not a frame for $L^2(\R \times \Q_p)$.
\end{theorem}

\begin{proof}
By \Cref{prop:pullback_bundle}, $\tilde{E} \cong \pi_0^*(E_{1,1})$. By \Cref{prop:vectsol}, $\tilde{E}$ is then nontrivial, since otherwise it would be of the form $(\pi_0^2)^*(E_{q,0})$ for some $q \in \N$. By \Cref{thm:bundle_bal}, the nontriviality of $\tilde{E}$ implies \Cref{problem:bl} for $(G,\Lambda)$.
\end{proof}

\appendix

\section{Fundamental domains}\label{appendix:fundoms}

In this appendix, we collect some results on fundamental domains and the construction of specific module frames for $\sec{G}{\Lambda}$. They are used in the proof of \Cref{prop:pullback_bundle} and in the proof of \Cref{prop:feichtinger_zak_cont_app}.

Let $\Lambda$ be a lattice in a second countable LCA group $G$. A measurable set $B \subseteq G$ is called a \emph{fundamental domain} (or \emph{Borel section}) \cite{mackey1} for $\Lambda$ in $G$ if the collection $\{ \lambda B : \lambda \in \Lambda \}$ forms a partition of $G$. Equivalently, every $x \in G$ can be written uniquely as $x=b\lambda$ where $b \in B$ and $\lambda \in \Lambda$.

\begin{proposition}\label{prop:fund_int}
Let $\Lambda$ be a lattice in a second countable, locally compact abelian group $G$. Then there exists a relatively compact fundamental domain $B$ for $\Lambda$ in $G$ with nonempty interior.
\end{proposition}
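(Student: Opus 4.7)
\bigskip

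The plan is to build $B$ by carving up a finite open cover of $G/\Lambda$ by images of a small neighborhood on which the quotient map is injective. Throughout, let $q : G \to G/\Lambda$ denote the quotient map, which is continuous, open, and surjective.

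First I would produce a relatively compact open neighborhood $U$ of the identity $e \in G$ on which $q$ is injective. Since $\Lambda$ is discrete, there is an open neighborhood $W$ of $e$ with $W \cap \Lambda = \{e\}$; by joint continuity of multiplication and inversion, one then finds a relatively compact open neighborhood $U$ of $e$ with $UU^{-1} \subseteq W$. If $q(x) = q(y)$ for $x,y \in U$, then $xy^{-1} \in \Lambda \cap UU^{-1} = \{e\}$, so $q|_U$ is injective; since $q$ is open and continuous, $q|_U : U \to q(U)$ is a homeomorphism onto an open subset of $G/\Lambda$. The same argument shows $q$ is injective on each translate $xU$.

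Next I would use compactness of $G/\Lambda$ to cover it by finitely many of the open sets $q(xU)$. Choosing $x_1 = e$ and additional points $x_2, \dots, x_n \in G$, set $U_i = x_i U$, so that $G/\Lambda = \bigcup_{i=1}^n q(U_i)$ and $V \coloneqq \bigcup_{i=1}^n U_i$ is open and relatively compact. I would then define a Borel partition of $G/\Lambda$ by the standard telescoping trick: $W_1 = q(U_1)$ and $W_i = q(U_i) \setminus \bigcup_{j < i} W_j$ for $i \geq 2$. Crucially, $W_1$ is \emph{open} because we never subtract from it. Finally, pull back via the homeomorphisms $q|_{U_i}$: set $B_i = (q|_{U_i})^{-1}(W_i) \subseteq U_i$ and $B = \bigsqcup_{i=1}^n B_i$.

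It remains to verify the three required properties, which are essentially automatic from the construction. That $B$ is a fundamental domain follows because each $y \in G/\Lambda$ lies in exactly one $W_i$ and has exactly one preimage under $q|_{U_i}$, so each $\Lambda$-orbit meets $B$ in exactly one point. That $B$ is relatively compact is immediate from $B \subseteq V$. For nonempty interior, observe that $B_1 = (q|_{U_1})^{-1}(q(U_1)) = U_1 = U$, so $U \subseteq B$ and $U$ is open. The main obstacle here is the nonempty-interior requirement: this is exactly why we single out $W_1 = q(U_1)$ without subtracting anything, rather than using a more symmetric selection such as a general Borel section of $q|_V$, which would typically destroy openness.
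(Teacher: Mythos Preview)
Your proof is correct and takes a genuinely different route from the paper's. The paper starts from an externally cited result (Margulis) guaranteeing a relatively compact fundamental domain $B$ with $\mu(\partial B)=0$, and then argues indirectly: if $B$ were nowhere dense, the countable cover $G=\bigcup_{\lambda\in\Lambda}\lambda B$ would contradict the Baire category theorem, so $(\overline{B})^{\circ}\neq\emptyset$; then $\mu(\partial B)=0$ forces $B^{\circ}\neq\emptyset$. Your argument is instead fully constructive: you build $B$ by hand from a finite cover of $G/\Lambda$ by images of a small symmetric neighborhood $U$, telescoping so that the first piece $B_1=U$ remains open. This avoids both the Margulis citation and the Baire category theorem, and in fact does not use second countability at all---your construction works verbatim for any locally compact group with a discrete cocompact subgroup. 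The paper's approach, on the other hand, additionally yields $\mu(\partial B)=0$, which your construction does not obviously give (the later $B_i$ are differences of open sets and could have boundary of positive measure); but the statement at hand does not ask for this.
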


\begin{proof}
Let $\mu$ denote a Haar measure on $G$. Since $G$ is both locally compact and second countable, it follows that $G$ is $\sigma$-compact. By \cite[p.\ 35, paragraph (0.40) 1)]{margulis}, there exists a relatively compact fundamental domain $B$ for $\Lambda$ in $G$ with $\mu(\partial B) = 0$.

We claim that $B$ cannot be nowhere dense. Suppose otherwise, i.e.\ $(\overline{B})^{\circ} = \emptyset$. Then $(\overline{\lambda B})^{\circ} = \lambda (\overline{B})^{\circ} = \emptyset$ for every $\lambda \in \Lambda$. Since $\Lambda$ is discrete in $G$ and $G$ is second countable, $\Lambda$ has to be countable. The identity
\[ G = \bigcup_{\lambda \in \Lambda} \lambda B \]
then expresses $G$, a locally compact Hausdorff space, as a countable union of nowhere dense subsets. This contradicts the Baire category theorem, so we must conclude that $(\overline{B})^{\circ} \neq \emptyset$.

Let $U$ be a nonempty open subset of $\overline{B}$. Now suppose for a contradiction that $B^{\circ} = \emptyset$. Then $\overline{B} = B^{\circ} \cup \partial B = \partial B$, so $\mu(U) \leq \mu(\overline{B}) = \mu(\partial B) = 0$, which contradicts the properties of Haar measure. Hence $B^{\circ} \neq \emptyset$.
\end{proof}



Let $B$ be a fundamental domain for the lattice $\Lambda$ in $G$. We define the function $\lfloor \cdot \rfloor_B \colon G \to \Lambda$ by letting $\lfloor x \rfloor_B$ be the unique $\lambda \in \Lambda$ such that $x = b\lambda$ for some (unique) $b \in B$. We think of this as a generalized floor function.

Note that we have
\begin{equation}
\lfloor x \lambda \rfloor_B = \lfloor x \rfloor_B \lambda \label{eq:floor_prop}
\end{equation}
for all $x \in G$ and $\lambda \in \Lambda$.

\begin{proposition}\label{prop:loc_const}
The function $\lfloor \cdot \rfloor_B$ is locally constant on $B^{\circ} \Lambda$ and discontinuous on $(B^{\circ} \Lambda)^c = (\partial B)\Lambda$.
\end{proposition}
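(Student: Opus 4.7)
The plan is to dispatch the set equality $(B^{\circ}\Lambda)^{c} = (\partial B)\Lambda$ first, then prove the two topological claims separately, using the uniqueness of the fundamental-domain decomposition and the discreteness of $\Lambda$.

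For the equality, I would note that $\overline{B} = B^{\circ} \cup \partial B$ together with $G = \overline{B}\Lambda$ gives $G = B^{\circ}\Lambda \cup (\partial B)\Lambda$. Disjointness of this union follows by a short approximation argument: if $b^{\circ}\lambda_1 = c\lambda_2$ with $b^{\circ} \in B^{\circ}$ and $c \in \partial B$, pick $c_n \in B$ with $c_n \to c$. Then $c_n\lambda_2 \to b^{\circ}\lambda_1$, which lies in the open set $B^{\circ}\lambda_1$, so eventually $c_n\lambda_2 \in B^{\circ}\lambda_1 \cap B\lambda_2 \subseteq B\lambda_1 \cap B\lambda_2$. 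The fundamental-domain property forces $\lambda_1 = \lambda_2$, and hence $c = b^{\circ} \in B^{\circ}$, contradicting $c \in \partial B$.

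Local constancy on $B^{\circ}\Lambda$ is essentially immediate. For $x = b\lambda$ with $b \in B^{\circ}$ and $\lambda = \lfloor x \rfloor_B$, the set $B^{\circ}\lambda$ is an open neighborhood of $x$, and every element of it decomposes uniquely with $B$-part in $B^{\circ} \subseteq B$; hence $\lfloor \cdot \rfloor_B$ takes the constant value $\lambda$ on this neighborhood.

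For discontinuity on $(\partial B)\Lambda$, I would use the equivariance relation \eqref{eq:floor_prop} to reduce to showing discontinuity at a point $c \in \partial B$. Since $\Lambda$ inherits the discrete topology as a subspace of $G$, continuity of $\lfloor \cdot \rfloor_B$ at $c$ is equivalent to its being constant on some neighborhood $U$ of $c$. But $c \in \partial B = \overline{B} \cap \overline{B^{c}}$ implies that every neighborhood of $c$ meets both $B$ (where $\lfloor \cdot \rfloor_B$ equals the identity of $\Lambda$) and $B^{c}$ (where $\lfloor \cdot \rfloor_B$ is nontrivial), so no such $U$ exists. The only mildly delicate point is that $c$ itself may lie in either $B$ or $B^{c}$, but the symmetric description $\partial B = \overline{B} \cap \overline{B^{c}}$ handles both cases uniformly, so this is not a genuine obstruction.
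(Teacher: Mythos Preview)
Your proof is correct and follows essentially the same approach as the paper for both claims: the local-constancy argument is identical, and for discontinuity both you and the paper exploit that a $\Lambda$-valued map into a discrete space is continuous at a point iff it is constant on a neighborhood. The only differences are cosmetic: you argue directly at a boundary point (every neighborhood meets $B$ and $B^{c}$), whereas the paper runs the contrapositive (continuity forces a neighborhood into a single translate $\lambda B$, hence into $B^{\circ}\Lambda$); and you supply a proof of the set equality $(B^{\circ}\Lambda)^{c}=(\partial B)\Lambda$, which the paper simply asserts in the statement.
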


\begin{proof}
We begin by showing that $\lfloor \cdot \rfloor_B$ is locally constant on $B^{\circ} \Lambda$. Suppose $x \in B^{\circ} \Lambda$. Then $x = b \lambda$ for $b \in B$, $\lambda \in \Lambda$, and there exists an open set $U \subseteq B$ with $b \in U$. Then $x \in \lambda U$. If $y \in \lambda U$, say $y = b' \lambda$ with $b' \in U$, then $\lfloor y \rfloor_B = \lambda = \lfloor x \rfloor_B$, which proves that $\lfloor \cdot \rfloor_B$ is constant on the set $\lambda U$.

Next, we show that if $\lfloor \cdot \rfloor_B$ is continuous at $x \in G$, then $x \in B^{\circ} \Lambda$. Suppose $\lfloor \cdot \rfloor_B$ is continuous at $x = b \lambda$ for $b \in B$, $\lambda \in \Lambda$. Let $V$ be a neighbourhood of $\lambda$ such that $V \cap \Lambda = \{ \lambda \}$. By continuity, there exists a neighbourhood $U$ of $x$ for which $\lfloor \cdot \rfloor_B(U) \subseteq V$. But since $\lfloor \cdot \rfloor_B$ maps into $\Lambda$, we have that $\lfloor y \rfloor_B = \lambda$ for all $y \in U$. Hence $\lfloor z \rfloor_B = 1$ for all $z \in \lambda^{-1}U$, i.e.\ $\lambda^{-1}U \subseteq B$. Since $b \in \lambda^{-1}U$ we then have that $b \in B^{\circ}$. Thus $x \in B^{\circ} \Lambda$.
\end{proof}

We describe how to obtain a specific module frame for $\sec{G}{\Lambda}$ from certain fundamental domains. This is inspired by \cite[p.\ 457]{projmultres1}. Given a fundamental domain $B$ for $\Lambda$ in $G$, define the function $J_B \colon G \times \widehat{G} \to \C$ by
\begin{equation}
J_B(x,\omega) = \overline{ \omega( \lfloor x \rfloor_B) }.
\end{equation}
Note that for $(x,\omega) \in G \times \widehat{G}$ and $(\lambda,\tau) \in \Lambda \times \Lambda^{\perp}$, we have that
\[ J_B(x\lambda,\omega\tau) = \overline{(\omega\tau)(\lfloor x \lambda \rfloor_B)} = \overline{ \omega( \lfloor x \rfloor_B \lambda)} = \overline{\omega(\lambda)} J_B(x,\omega) . \]
Hence $J_B$ satisfies the quasiperiodicity relation \eqref{eq:quasi}. However, it is not necessarily continuous everywhere on $G \times \widehat{G}$, so it is not an element of $\sec{G}{\Lambda}$ in general. But by \Cref{prop:loc_const}, $J_B$ is continuous on $B^{\circ} \Lambda \times \widehat{G}$. Thus, if $f \colon G \to \C$ is a continuous, $\Lambda$-periodic function with $f|_{\partial B} = 0$, then the function $G \colon G \times \widehat{G} \to \C$ given by
\begin{equation}
G(x,\omega) = f(x) J_B(x,\omega) \label{eq:Gfunction}
\end{equation}
for $(x,\omega) \in G \times \widehat{G}$ is continuous and quasiperiodic, hence an element of $\sec{G}{\Lambda}$. We will use these functions in the following proposition to construct module frames for $\sec{G}{\Lambda}$:

\begin{proposition}\label{cor:specific_frame_qp}
Let $B$ be a relatively compact fundamental domain with nonempty interior and let $x_1, \ldots, x_k \in G$ be such that $\{ x_j B^{\circ} \Lambda \}_{j=1}^k$ covers $G$. Let $f \in C(G)$ be a $\Lambda$-periodic function with $f|_{\partial B} = 0$ that satisfies
\begin{equation}
\sum_{j=1}^k |f(x_j^{-1}x)|^2 = 1 \label{eq:filter_cor}
\end{equation}
for all $x \in G$. Set
\begin{equation}
G_j(x,\omega) = f(x_j^{-1}x) J_B(x_j^{-1}x,\omega).
\end{equation}
Then is a normalized tight module frame for $\sec{G}{\Lambda}$.
\end{proposition}

\begin{proof}
As described in the discussion preceding the proposition, the function $f(x) J_B(x,\omega)$ is continuous on the whole of $G \times \widehat{G}$ and hence an element of $\sec{G}{\Lambda}$. By translation, the functions $f(x_j^{-1}x) J_B(x_j^{-1}x,\omega)$ are also elements of $\sec{G}{\Lambda}$ for each $1 \leq j \leq k$. Finally, we have that
\[ \sum_{j=1}^k |G_j(x,\omega)|^2 = \sum_{j=1}^k |f(x_j^{-1}x)|^2 = 1 \]
for all $x \in G$. Hence, by \Cref{prop:frame_in_qp}, the set $\{ G_1, \ldots, G_k \}$ is a normalized tight module frame for $\sec{G}{\Lambda}$.
\end{proof}

\begin{example}\label{ex:qpfilter}
In this example we show how wavelet filter functions give us module frames as in \Cref{cor:specific_frame_qp} in the case $G = \R$, $\Lambda = \Z$. The example closely follows the discussion in \cite[p. 457]{projmultres1}.

Let $m \colon \R \to \C$ be a continuous wavelet filter function for dilation by 2 (see for instance \cite[Section 12.6]{heilbases}), i.e.\ it is $\Z$-periodic and satisfies
\begin{equation}
|m(s)|^2 + |m(s+1/2)|^2 = 1 \label{eq:filterdil2}
\end{equation}
for all $s \in \R$. Suppose also that $m$ is normalized so that $m(0)=1$. Then by \eqref{eq:filterdil2} we have that $m(1/2) = 0$, which gives $m(k+1/2) = 0$ for all $k \in \Z$ by periodicity. Let $B$ be the fundamental domain $[0,1)$ for $\Z$ in $\R$, and set $x_1 = 0$, $x_2 = -1/2$. We see that $f$ given by $f(s) = m(s-1/2)$ is continuous, vanishes on $\Z = (\partial B)\Lambda$ and satisfies \eqref{eq:filter_cor}, so by \Cref{cor:specific_frame_qp}, the functions
\begin{align*}
G_1(s,t) &= f(s) J_{[0,1)}(s,t), \\
G_2(s,t) &= f(s+1/2) J_{1/2+[0,1)}(s,t)
\end{align*}
form a normalized tight frame for $\sec{\R}{\Z}$.

Note that the functions $J_{\beta + [0,1)}$ are exactly the functions $j_{1,1,\beta}$ in \cite{projmultres1} (and indeed this inspired the notation $J_B$) and that $\sec{G}{\Lambda}$ is isomorphic to $X(1,1)$, only that the latter is considered a right module instead of a left module.
\end{example}

\section{Cutoff functions in the Feichtinger algebra}\label{appendix:s0}

In this appendix, we have collected some technical results concerning $S_0(G)$ for which the author was unable to find a reference.

The following lemma is used to prove \Cref{lem:cutoff}.

\begin{lemma}\label{lem:topgroupsets}
The following hold in a locally compact abelian group:
\begin{enumerate}
\item\label{it:top1} If $K \subseteq U$ where $K$ is compact and $U$ is open, then there exists a neighbourhood $N$ of the identity such that $KN \subseteq U$.
\item\label{it:top2} If $K \subseteq V \subseteq \overline{V} \subseteq U$ where $K$ is compact and $U,V$ are open and $U$ is contained in compact set, then there exists a neighbourhood $N$ of the identity such that $K \subseteq \cap_{x \in N}(xV)$ and $\overline{V}N \subseteq U$.
\end{enumerate}
\end{lemma}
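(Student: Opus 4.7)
The proof splits cleanly into two parts, the first of which is a standard compactness argument that will then be reused to obtain the second.

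For part (i), the plan is to exploit continuity of multiplication at each point of the form $(k, e) \in K \times G$. For every $k \in K$, since $k \cdot e = k \in U$ and multiplication $G \times G \to G$ is continuous, I would pick an open neighbourhood $W_k$ of $k$ and an open neighbourhood $N_k$ of the identity such that $W_k N_k \subseteq U$. The collection $\{W_k\}_{k \in K}$ is an open cover of the compact set $K$, so a finite subcover $W_{k_1}, \ldots, W_{k_n}$ suffices. Setting $N = N_{k_1} \cap \cdots \cap N_{k_n}$, which is still an open neighbourhood of the identity, I get
\[ KN \subseteq \bigcup_{j=1}^n W_{k_j} N \subseteq \bigcup_{j=1}^n W_{k_j} N_{k_j} \subseteq U, \]
which is the desired conclusion.

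For part (ii), I would reduce both inclusions to two applications of (i). First, note that $\overline{V}$ is closed and contained in a compact set, hence itself compact. Applying (i) to the pair $\overline{V} \subseteq U$ produces a neighbourhood $N_2$ of the identity with $\overline{V} N_2 \subseteq U$. Next, applying (i) to $K \subseteq V$ gives a neighbourhood $N_1'$ of the identity with $KN_1' \subseteq V$; shrink $N_1'$ to a \emph{symmetric} neighbourhood $N_1$ of the identity (possible in any topological group by replacing $N_1'$ with $N_1' \cap (N_1')^{-1}$). Since $G$ is abelian and $N_1$ is symmetric, $N_1^{-1} K = N_1 K = K N_1 \subseteq V$, which unpacks to the condition that for every $x \in N_1$ and $k \in K$, the element $x^{-1} k$ lies in $V$, i.e., $k \in xV$. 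Hence $K \subseteq \bigcap_{x \in N_1}(xV)$. Taking $N = N_1 \cap N_2$ delivers both conclusions simultaneously.

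The argument is essentially routine; the only mild point to handle carefully is ensuring $\overline{V}$ is compact (so that (i) applies to it), which follows from the hypothesis that $U$ is contained in a compact set together with $\overline{V} \subseteq \overline{U}$. No genuine obstacle is expected.
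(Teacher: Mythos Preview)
Your proof is correct. Part (i) matches the paper's argument in spirit (the paper phrases it via neighbourhoods $N_x$ with $xN_x^2\subseteq U$, but this is the same compactness-and-intersect idea). In part (ii) you take a genuinely simpler route than the paper: the paper passes to complements inside the ambient compact set $L\supseteq U$, observes $L\setminus V\subseteq L\setminus K$ with $L\setminus V$ compact, and applies (i) to that pair to obtain $(L\setminus V)N_2\subseteq L\setminus K$, then unwinds the complements to get $K\subseteq\bigcap_{x\in N_2}xV$. Your argument avoids this detour by applying (i) directly to $K\subseteq V$ and using a symmetric neighbourhood together with commutativity to flip $KN_1\subseteq V$ into $K\subseteq\bigcap_{x\in N_1}xV$. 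The paper's approach has the mild advantage that it does not use commutativity (so it would work verbatim in a non-abelian locally compact group), whereas yours leans on $KN_1=N_1K$; since the lemma is stated for abelian groups, this costs nothing and yields a cleaner argument.
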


\begin{proof}
We begin by proving \ref{it:top1}. For every $x \in K$, we can find a neighbourhood $N_x$ of the identity such that $x \in x N_x^2 \subseteq U$ \cite[p.\ 18, Theorem (4.5)]{hewittross}. Then we have that the union $\bigcup_{x \in K} (x N_x)$ covers $K$, so there exists a finite subcovering given by $x_1, \ldots, x_n \in K$. Let $N = \bigcap_{k=1}^n N_{x_k}$, which is a nonempty neighbourhood of the identity. If $x \in K$ and $y \in N$, then $x = x_k z$ for some $k$ and $z \in N_{x_k}$. But then $zy \in N_{x_k}^2$, so $xy = x_k zy \in x_k N_{x_k}^2 \subseteq U$. This shows that $KN \subseteq U$.

We now prove \ref{it:top2}. Denote by $L$ the compact set that covers $U$. Since $\overline{V}$ is contained in $L$, it is compact. Using \ref{it:top1}, we can find an open neighbourhood $N_1$ of the identity such that $\overline{V}N_1 \subseteq U$. Taking relative complements in $L$, we obtain $L \setminus V \subseteq L \setminus K$. Now $L \setminus V$ is closed and contained in $L$, hence compact, and $L \setminus K$ is open. Thus we can apply \ref{it:top1} again and obtain an open neighbourhood $N_2$ of the identity such that $(L \setminus V)N_2 \subseteq L \setminus K$. But then
\[ L \setminus \left( (L \setminus V)N_2 \right) = L \setminus \left( \bigcup_{x \in N_2} x  (L \setminus V ) \right) = \bigcap_{x \in N_2} L \setminus (x (L \setminus V)) = \bigcap_{x \in N_2} x V \]
so $K \subseteq \bigcap_{x \in N_2} x V$. Setting $N = N_1 \cap N_2$, we see that this set satisfies both $\overline{V}N \subseteq U$ and $K \subseteq \bigcap_{x \in N} (xV)$, so the proof is finished.
\end{proof}

\begin{proposition}\label{lem:cutoff}
Let $K$ be a compact subset of $G$, and let $U$ be an open subset of $G$ such that $K \subseteq U$. Then there exists a function $f \in S_0(G)$ such that $f|_K = 1$ and $\supp(f) \subseteq U$.
\end{proposition}

\begin{proof}
If $K = \emptyset$ then one can just set $f = 0$, so suppose that $K \neq \emptyset$. Since all locally compact groups are normal as topological spaces \cite[p.\ 76, Theorem (8.13)]{hewittross}, we can find an open set $V$ such that $K \subseteq V \subseteq \overline{V} \subseteq U$, and since $K$ is compact, we can assume that $\overline{V}$ compact. Now without loss of generality, we can assume that $U$ is contained in a compact set.

Let $N$ be an open neighbourhood as in part \ref{it:top2} of \Cref{lem:topgroupsets}, i.e. we have both $K \subseteq \bigcap_{t \in N} t V$ and $N\overline{V} \subseteq U$. Since $K \neq \emptyset$, $N$ is necessarily nonempty. Let $\phi \in C_c(G)$ satisfy $\int_G \phi(x) \, \dee x = 1$ and $\supp(\phi) \subseteq N$. Define $f = \phi * \chi_V$. Then $f \in S_0(G)$ by \cite[Lemma 4.2 (iii)]{notnew}. Now since $\supp(\phi) \subseteq N$, we have that
\[ f(x) = \int_G \phi(t) \chi_V(t^{-1}x) \, \dee t = \int_{N} \phi(t) \chi_V(t^{-1}x) \, \dee t .\]
If $x \in K$ then $x \in \bigcap_{t \in N} tV$, so $t^{-1}x \in V$ for all $t \in N$. Thus, $\chi_V(t^{-1}x) = 1 $ for all $t \in N$ and hence
\[ f(x) = \int_{N} \phi(t) \, \dee t = 1 .\]
If $x \notin U$ then $x \notin \overline{V}N$ which means $t^{-1}x \notin \overline{V}$ for all $t \in N$, so $\chi_V(t^{-1} x) = 0$. This gives $f(x) = 0$. While this does not necessarily give $x \notin \supp(f)$, we can pick another open set $W$ of $G$ with $K \subseteq W \subseteq \overline{W} \subseteq U$ and choose an $f \in S_0(G)$ as above with $f|_K = 1$ and $f(x) = 0$ whenever $x \notin W$. In that case, $\supp(f) \subseteq \overline{W} \subseteq U$, which gives us what we wanted.
\end{proof}

The proof of the next result is based on the proof given in \cite[Lemma 8.2.1 c)]{groechenig}. The proof employs the Wiener algebra $W(G)$ \cite{wieneralg}. Specifically, we will use that $S_0(G) \subseteq W(G)$, which can be seen by combining $W(\mathcal{F}L^1,L^1) = S_0(G)$, cf.\ \cite[Remark 6]{new_segal}, with the inclusion $W(\mathcal{F}L^1,L^1) \subseteq W(L^\infty,L^1)$, cf.\ \cite[Lemma 1.2(iv)]{feicht_interp} and $W(G) = W(L^\infty,L^1)$.

\begin{proposition}\label{prop:feichtinger_zak_cont_app}
Let $\Lambda$ be a lattice in a second countable, locally compact abelian group. If $\xi \in S_0(G)$, then $Z_{G,\Lambda}\xi$ is continuous. Hence, the Zak transform maps $S_0(G)$ into $\sec{G}{\Lambda}$.
\end{proposition}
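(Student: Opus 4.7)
The plan is to show that the Zak series converges absolutely and uniformly on every set of the form $K \times \widehat{G}$ with $K \subseteq G$ compact; since each partial sum $(x,\omega) \mapsto \sum_{\lambda \in F} \xi(x\lambda)\omega(\lambda)$ is a finite sum of continuous functions, this will imply continuity of $Z_{G,\Lambda}\xi$. Because $|\omega(\lambda)| = 1$, absolute uniform convergence on $K \times \widehat{G}$ reduces to the estimate $\sum_{\lambda \in \Lambda} \sup_{x \in K} |\xi(x\lambda)| < \infty$.

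To obtain this, I exploit the hinted inclusion $S_0(G) \subseteq W(G) = W(L^\infty, L^1)$: there exists a relatively compact open neighborhood $U$ of the identity such that the control function $H(y) = \sup_{u \in U} |\xi(yu)|$ lies in $L^1(G)$. Cover the compact set $K$ by finitely many translates $K \subseteq \bigcup_{i=1}^n x_i U$; then for every $\lambda \in \Lambda$ and every $x \in K$, writing $x = x_i u$ with $u \in U$ and using commutativity of $G$ gives $|\xi(x\lambda)| = |\xi(x_i\lambda u)| \leq H(x_i\lambda)$. Consequently $\sup_{x \in K}|\xi(x\lambda)| \leq \sum_{i=1}^n H(x_i\lambda)$, reducing the problem to showing that the periodization $\sum_{\lambda \in \Lambda} H(y\lambda)$ is finite and bounded uniformly in $y \in G$.

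The main obstacle is precisely this periodization estimate, since pointwise evaluations of an $L^1$ function cannot in general be summed over a lattice. The remedy is a standard majorant/Riemann-sum trick: choose a symmetric relatively compact open neighborhood $V$ of the identity small enough that the translates $\{\lambda V\}_{\lambda \in \Lambda}$ are pairwise disjoint (possible by discreteness of $\Lambda$, since $VV^{-1} \cap \Lambda = \{1\}$ suffices), and set $\tilde{H}(y) = \sup_{v \in V} H(yv)$. Then $\tilde{H} = M_{UV}\xi$, which again lies in $L^1(G)$ by the defining property of $W(G)$ (the Wiener norm is independent of the chosen neighborhood up to equivalence). For every $v \in V$ and every $\lambda \in \Lambda$, taking $w = v^{-1} \in V$ in the supremum defining $\tilde{H}(y\lambda v)$ gives $\tilde{H}(y\lambda v) \geq H(y\lambda)$, whence
\[ |V| \cdot H(y\lambda) \leq \int_V \tilde{H}(y\lambda v)\, dv = \int_{y\lambda V} \tilde{H}(z)\, dz. \]
Summing over $\lambda \in \Lambda$ and invoking disjointness of $\{y\lambda V\}_{\lambda \in \Lambda}$ yields $\sum_{\lambda} H(y\lambda) \leq \|\tilde{H}\|_1/|V|$, a bound independent of $y \in G$.

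Combining these estimates gives $\sum_{\lambda} \sup_{x \in K}|\xi(x\lambda)| \leq n\|\tilde{H}\|_1/|V| < \infty$, so the Zak series converges uniformly on $K \times \widehat{G}$ to a continuous function. Since $K$ is an arbitrary compact subset of $G$, $Z_{G,\Lambda}\xi$ is continuous on all of $G \times \widehat{G}$; combined with the quasiperiodicity relation already verified before the statement, this places $Z_{G,\Lambda}\xi$ in $\sec{G}{\Lambda}$.
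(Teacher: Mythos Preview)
Your proof is correct and follows essentially the same route as the paper: both argue that the Zak series converges uniformly on (relatively) compact sets by invoking the inclusion $S_0(G)\subseteq W(G)$ and the resulting finiteness of $\sum_{\lambda\in\Lambda}\sup_{x\in K}|\xi(x\lambda)|$. The only difference is that the paper takes this summability as an immediate consequence of the discrete Wiener-amalgam norm over a fundamental domain, whereas you supply a self-contained derivation via the disjoint-translates Riemann-sum trick; your version is thus slightly more explicit but not conceptually different.
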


\begin{proof}
Suppose $\xi \in S_0(G)$, so that $\xi \in W(G)$. Let $B$ be a fundamental domain for $\Lambda$ in $G$ with nonempty interior as in \Cref{prop:fund_int}. Because of quasiperiodicity, it is enough to prove continuity of $Z \xi$ on $B \times \widehat{G}$, so let $(x,\omega) \in B \times \widehat{G}$ and suppose $(x_\alpha, \omega_\alpha)_\alpha$ is a net in $B \times \widehat{G}$ that converges towards $(x,\omega)$. Let $\epsilon >0$. There exists an $\alpha_1$ such that whenever $\alpha \geq \alpha_1$, then $x_\alpha \in B^{\circ} \subseteq B$.

Since $\| \xi \|_{W(G)}$ is finite, there exists a finite subset $F$ of $\Lambda$ such that
\[ \sum_{\lambda \in \Lambda \setminus F} \sup_{x \in B} |\xi(x \lambda)| < \frac{\epsilon}{2} .\]
Since $\xi$ and $\omega$ are continuous, the function $(x,\omega) \mapsto \sum_{\lambda \in F} \xi(x \lambda) \omega(\lambda)$ is continuous, so there exists an $\alpha_2$ such that we have that
\[ \left| \sum_{\lambda \in F} \xi(x_\alpha \lambda) \omega_\alpha(\lambda) - \sum_{\lambda \in F} \xi(x \lambda) \omega(\lambda) \right| < \frac{\epsilon}{2} \]
for $\alpha \geq \alpha_2$. Combining all our observations, we see that if $\alpha \geq \alpha_1$ and $\alpha \geq \alpha_2$ then
\begin{align*}
| Z \xi(x_\alpha,\omega_\alpha) - Z \xi(x,\omega) | &= \left| \sum_{\lambda \in \Lambda} \xi(x_\alpha \lambda) \omega_\alpha(\lambda) - \sum_{\lambda \in \Lambda} \xi(x\lambda) \omega(\lambda) \right| \\
&\leq \frac{\epsilon}{2} + \left| \sum_{\lambda \in \Lambda \setminus F} \xi(x_\alpha \lambda) \omega_\alpha(\lambda) - \sum_{\lambda \in \Lambda \setminus F} \omega(x\lambda) \omega(\lambda) \right| \\
&\leq \frac{\epsilon}{2} + \sum_{\lambda \in \Lambda \setminus F} | \xi(x_\alpha \lambda)| + \sum_{\lambda \in \Lambda \setminus F} | \xi(x \lambda)| \\
&\leq \frac{\epsilon}{2} + 2 \sum_{\lambda \in \Lambda \setminus F} \sup_{x \in B} |\xi(x\lambda)| \\
&\leq \frac{\epsilon}{2} + 2 \cdot \frac{\epsilon}{4} \\
&= \epsilon.
\end{align*}
This finishes the proof.
\end{proof}

\section*{Acknowledgements}

The author is indebted to Nadia Larsen, Franz Luef, Judith Packer and Leonard Huang for valuable conversations. The author is grateful for the hospitality of Judith Packer, Carla Farsi and the rest of the operator algebras group at the University of Colorado, Boulder during his stay there in 2018--2019, during which a lot of the paper was written. The stay was supported by the Fulbright Foundation and the Tron Mohn foundation. The author also wants to thank Mads S.\ Jakobsen for pointing out helpful references, and Andreas Andersson, Are Austad, Martin Helsø and Petter Nyland for giving feedback on drafts of the paper. Finally, the author wants to thank the referee for valuable feedback, in particular for pointing out how to better structure \Cref{sec:bundles_qp}, leading to shorter proofs and less dependence on fundamental domains.

\printbibliography

\end{document}